\newtheorem{theorem}{Theorem}[section]
\newtheorem{lemma}[theorem]{Lemma}
\newtheorem{proposition}[theorem]{Proposition}
\newtheorem{corollary}[theorem]{Corollary}
\newtheorem{conjecture}[theorem]{Conjecture}
\theoremstyle{definition}
\newtheorem{definition}[theorem]{Definition}
\newtheorem{remark}[theorem]{Remark}
\DeclareMathOperator{\rank}{rank}
\DeclareMathOperator{\Ran}{Ran}
\DeclareMathOperator{\supp}{supp}
\DeclarePairedDelimiter{\abs}{\lvert}{\rvert}
\DeclarePairedDelimiter{\scalprod}{\langle}{\rangle}
\newcommand{\vertiii}[1]{{\left\vert\kern-0.25ex\left\vert\kern-0.25ex\left\vert #1 
        \right\vert\kern-0.25ex\right\vert\kern-0.25ex\right\vert}}
\newcommand{\vertii}[1]{{\left\vert\kern-0.25ex\left\vert #1 
        \right\vert\kern-0.25ex\right\vert}}
\newcommand{\suchthat}{\ifnum\currentgrouptype=16 \mathrel{}\middle|\mathrel{}\else\mid\fi}
\theoremstyle{definition}
\begin{document}

\title{Hautus--Yamamoto criteria for approximate and exact controllability of linear difference delay equations}

\author{Yacine Chitour\footnotemark[1] \and Sébastien Fueyo\footnotemark[2] \and Guilherme Mazanti\footnotemark[3] \and Mario Sigalotti\footnotemark[2]}

{\renewcommand{\thefootnote}{\fnsymbol{footnote}}
\footnotetext[1]{Universit\'e Paris-Saclay, CNRS, CentraleSup\'elec, Laboratoire des signaux et syst\`emes, 91190, Gif-sur-Yvette, France.}
\footnotetext[2]{Sorbonne Universit\'e, Inria, CNRS, Laboratoire Jacques-Louis Lions (LJLL), F-75005 Paris, France.}
\footnotetext[3]{Universit\'e Paris-Saclay, CNRS, CentraleSup\'elec, Inria, Laboratoire des signaux et syst\`emes, 91190, Gif-sur-Yvette, France. G.M.\ was partially supported by ANR PIA funding: ANR-20-IDEES-0002.}
}

\maketitle

\begin{abstract}
The paper deals with the controllability of finite-dimensional linear difference delay equations, i.e., dynamics for which the state at a given time $t$ is obtained as a linear combination of the control evaluated at time $t$ and of the state evaluated at finitely many previous  instants of time $t-\Lambda_1,\dots,t-\Lambda_N$. Based on the realization 
theory developed by Y.~Yamamoto for general infinite-dimensional dynamical systems, we obtain necessary and sufficient conditions, expressed in the frequency domain, for the approximate controllability in finite time in $L^q$ spaces, $q \in [1, +\infty)$. We also provide a necessary condition for $L^1$ exact controllability, which can be seen as the closure of the $L^1$ approximate controllability criterion. Furthermore, we provide an explicit upper bound on the minimal times of approximate and exact controllability, given by $d\max\{\Lambda_1,\dots,\Lambda_N\}$, where $d$ is the dimension of the state space.
\end{abstract}

{\small
\noindent \textbf{Keywords:} difference delay equations, approximate controllability, exact controllability, realization theory, Bézout's identity.

\medskip

\noindent \textbf{2020 Mathematics Subject Classification:} 39A06, 93B05, 93C05
}

\section{Introduction}

The present paper deals with the approximate and exact controllability of linear difference 
delay equations of the form
\begin{equation}
\label{system_lin_formel2}
 x(t)=\sum_{j=1}^NA_jx(t-\Lambda_j)+Bu(t) , \qquad t \ge 0,
\end{equation}
where, given three positive integers $d$, $m$, and $N$, $A_1,\dotsc,A_N$ are fixed $d\times d$ 
matrices with real entries, the state $x$ and the control $u$ belong 
to $\mathbb{R}^d$ and $\mathbb{R}^m$ respectively, and $B$ is a fixed $d \times m$ matrix 
with real entries. Without loss of generality, the delays $\Lambda_1, \dotsc, \Lambda_N$ 
are positive real numbers so that $\Lambda_1< \dotsb <\Lambda_N$. 

One of the major interests in the study of difference delay equations is that some 1D hyperbolic partial differential equations (PDEs) and, more precisely, systems of linear conservation laws
can be put in the form~\eqref{system_lin_formel2} through the method of characteristics \cite{baratchart,bastin2016stability,Chitour2016Stability,CoNg}, yielding a system under the form \eqref{system_lin_formel2} with a specific structure for the matrices $A_1, \dotsc, A_N$ (which are, in particular, all of rank $1$). Equation~\eqref{system_lin_formel2} has mostly been analysed from a stability viewpoint, in the case where there is no open-loop control $u$ (or, equivalently, when $B = 0$). Necessary and sufficient criteria have been obtained for the exponential stability of the origin of the system \cite{Avellar,Chitour2016Stability,Hale,Henry}.

The main efforts to study the controllability properties of delay systems have been made on neutral differential delay systems
\begin{equation}
\label{system_neutral}
\frac{d}{dt} \left( x(t)-\sum_{j=1}^NA_jx(t-\Lambda_j) \right)=\sum_{j=0}^N\widetilde{A}_jx(t-\Lambda_j)+B u(t) , \qquad t \ge 0,
\end{equation}
where $\widetilde{A}_0,\dots,\widetilde{A}_N$ are $d \times d$ real matrices and $\Lambda_0=0$. Due to the infinite-dimensional nature of neutral functional differential equations and difference delay equations, several notions of controllability arise, such as approximate, exact, or relative controllability. Such notions of controllability can be particularized according to whether one requires or not that controllability occurs in a uniform time $T$ (see Definitions~\ref{def:Lq-cont} and \ref{def:contr-from-origin}).

The mostly investigated controllability notion is that of approximate controllability (with no finite upper bound on the controllability time), usually in a control space made of square integrable functions and a state space equal to the Sobolev space $W^{1,2}([-\Lambda_N,0],\mathbb{R}^d)$. Usual tools in this context are semigroups properties and Laplace transforms, which lead to Hautus-type conditions for controllability, i.e., rank conditions on certain matrix-valued holomorphic functions. When the system is \emph{retarded}, i.e.,  $A_j = 0$ for every  $j$, Manitius~\cite{Manitius} gave a necessary and sufficient 
condition for the approximate controllability of System~\eqref{system_neutral}. This result has 
been extended by Jacob \emph{et al.}~\cite{Jacobs} and O'Connor \emph{et al.}~\cite{connor} to the 
neutral case in the single-delay case $N=1$. Salamon~\cite{salamon1984control} gave dual formulations of exact controllability (which can be seen as observability inequalities) as well as some 
insights for the general case for small solutions. Finally, 
Yamamoto~\cite{yamamoto1989reachability}, through the infinite realization theory developed 
in~\cite{yamamoto1981realization}, expanded the result on approximate controllabiity to the general neutral case 
with an arbitrary number of delays.

Surprisingly, the controllability of the difference delay system \eqref{system_lin_formel2} has been less investigated until recently. Due to the form of equation \eqref{system_lin_formel2}, both the control space and the state space can be chosen of the same nature, such as, for instance, $L^q$ for some $q \in [1, +\infty]$. The notion of relative controllability is the simplest one and it requires steering the system in time $T > 0$ from any given initial condition to any given target $x(T)$ in $\mathbb{R}^d$. That question is now completely understood: after studies handling special cases, Mazanti~\cite{Mazanti2017Relative} gave a necessary and sufficient condition for the relative controllability for the general system \eqref{system_lin_formel2}.

Chitour \emph{et al.}~\cite{Chitour2020Approximate} proposed algebraic conditions for the exact and approximate controllability in $L^2([-\Lambda_N,0],\mathbb{R}^d)$ in uniform time $T$ and gave a bound on the minimal time of controllability in the case where the delays are rationally dependent, situation which is solved by a standard state augmentation technique, reducing the matters at hand to the case of a single delay. They also addressed the first nontrivial case of two rationally independent delays when the system has dimension two and has a single scalar control, i.e., when $N=d=2$ and $m=1$. They provided necessary and sufficient conditions for both approximate and exact controllability in terms of Kalman-type criteria on the parameters of the system and asked whether it is possible to obtain such criteria in a more general situation. However, the techniques of proof used in \cite{Chitour2020Approximate} become intractable when working in higher dimension or with more than two delays.

The approach adopted in this paper to address approximate and exact controllability of System \eqref{system_lin_formel2} is based instead on Yamamoto's realization theory developed in \cite{Yutaka_Yamamoto,yamamoto1981realization,YamamotoRealization,yamamoto1989reachability,Yamamoto_coprimness_measure,YAMAMOTO_Multi_Ring_2011,Yamamoto_Willems}. Yamamoto's theory, which extends older ideas given in \cite{kalman1972realization,kamen1976module}, considers controllability issues for linear systems without a bound on the controllability time by adopting a distributional framework (i.e., inputs and outputs belong to spaces of distributions) in which the system can be characterized by a pair $(P, Q)$ of matrix-valued distributions. In such a framework, controllability issues are given in terms of Bézout's characterizations. More precisely, exact controllability from the origin (in the space of distributions) is equivalent to the existence of two distributions $R, S$ solving Bézout's identity $Q * R + P * S = \delta_0 I_d$ \cite{Yamamoto_Willems}, while $L^2$ approximate controllability  from the origin is equivalent to proving an approximate Bézout identity, i.e., the existence of sequences of distributions $(R_n)_{n \in \mathbb N}$ and $(S_n)_{n \in \mathbb N}$ such that $Q * R_n + P * S_n$ converges to $\delta_0 I_d$ in the distributional sense as $n$ tends to infinity \cite{YamamotoRealization}. In realization theory, the solvability of a Bézout identity (respectively, an approximate Bézout identity) is usually called \emph{left coprimeness} (respectively, \emph{approximate left coprimeness}), cf.\ \cite{polderman1998introduction,sontag}. Note that one of the virtues of relating controllability issues and Bézout's identities is that, if the latter has been solved, then one has a solution for the motion planning problem, i.e., a right-inverse to the endpoint map, cf.\ Proposition~\ref{prop_fond_exact_controllability}.

Our results deal with approximate and exact criteria for controllability in functional state spaces $L^q([-\Lambda_N,0],\mathbb{R}^d)$, for $q \in [1,+\infty)$. First, we prove that the range of the endpoint map from the origin associated with System \eqref{system_lin_formel2} saturates (i.e., does not increase) for $T\geq d\Lambda_N$, enabling one to provide an upper bound on the minimal time of controllability, namely, proving that if $L^q$ approximate (or exact) controllability holds true, then such a controllability must occur in time less than or equal to $d\Lambda_N$ (see Theorem~\ref{lem:3}). This is based on an explicit representation of the endpoint map given in \cite{Chitour2020Approximate}. It should be noticed that, for hyperbolic systems of conservation laws, sharp results on the (exact and null) controllability time have been obtained in the remarkable papers \cite{Coron2019Optimal, Coron2021Null}. We also mention the striking results obtained in \cite{CoronOptimal} on optimal controllability time in the case of hyperbolic systems with analytic time-varying coefficients. We recall, however, that hyperbolic systems of conservation laws correspond to special classes of linear difference delay equations of the form \eqref{system_lin_formel2}.

Furthermore, specifying Yamamoto's Hautus-type approximate controllability criterion to System~\eqref{system_lin_formel2}, the second contribution of this paper consists in providing sufficient and necessary Hautus-type approximate controllability criteria for \eqref{system_lin_formel2} in $L^q([-\Lambda_N,0],\mathbb{R}^d)$ for all $q \in [1,+\infty)$ (see Theorem~\ref{main_result1} and Proposition~\ref{Prop2:main_result0}). In particular, since our criteria are independent of $q$, we deduce that, if $L^q$ approximate controllability holds for some $q \in [1, +\infty)$, then it holds for every such $q$.

The third contribution of this paper is a characterization of the $L^1$ exact controllability
of System~\eqref{system_lin_formel2} in terms of 
a Bézout identity over a Radon measure algebra (see Theorem~\ref{prop_fond_exact_controllability_L1}),
showing in particular that establishing $L^1$ exact controllability
of System~\eqref{system_lin_formel2} is equivalent to solving a corona problem (see \cite{carleson1962interpolations,Fuhrmann_corona} for more details on corona problems). It is worth noticing that the saturation of the range of the endpoint map is a crucial step also to derive such a  characterization. A necessary Hautus-type criterion for the solvability of the Bézout identity is given in Proposition~\ref{prop_nece_bezout}, allowing us to obtain a necessary condition for the exact controllability in $L^1([-\Lambda_N,0],\mathbb{R}^d)$ (see Theorem~\ref{main_result2} and Proposition~\ref{Prop2:main_result}). The necessary condition for the $L^1$ exact controllability can be seen as the closure of the conditions obtained for the $L^q$ approximate controllability.
 
We conjecture that such a necessary condition is also sufficient for the exact controllability in $L^q([-\Lambda_N,0],\mathbb{R}^d)$, for any $q\in[1,+\infty)$. This amounts to solving a Bézout identity over a Radon measure algebra, which turns out to be a challenging open question.
Note that the conjecture holds true for $N=d=2$ and $m=1$ as regards the $L^2$ exact controllability, cf.\ \cite{Chitour2020Approximate}. In addition, we also provide in this paper a result, Theorem~\ref{thm:Ck}, yielding a partial positive answer to the conjecture, stating that our Hautus-type condition implies exact controllability between regular enough functions.

The exact controllability conditions we provide in Proposition~\ref{Prop2:main_result} are reminiscent of other controllability criteria for abstract equations in Banach spaces, such as Condition (24) in \cite{Miller2004}, obtained through semigroup theory. However, such results are strongly related to the skew-adjointness of the involved infinitesimal generator operators and the unitary character of the corresponding semigroups, which is not the case for general difference delay systems of the form \eqref{system_lin_formel2}. We also remark that our approximate controllability criterion is similar to that given in \cite{hale2002stabilization} for the strong stabilization of System \eqref{system_lin_formel2}, i.e., for the existence of matrices $K_1, \dotsc, K_N$ so that the feedback $u(t) = \sum_{j=1}^N K_j x(t - \Lambda_j)$ stabilizes System \eqref{system_lin_formel2} for all choices of delays $\Lambda_1, \dotsc, \Lambda_N$. Namely, combining our result with \cite[Theorem~3.1]{hale2002stabilization}, approximate controllability of \eqref{system_lin_formel2} for every choice of delays $\Lambda_1, \dotsc, \Lambda_N$ implies strong stabilization of System \eqref{system_lin_formel2}.

We conclude the paper by illustrating the applicability of our results. We start by recovering those of \cite{Chitour2020Approximate}, at least for what concerns the characterization of approximate controllability. We then highlight the generality of our criteria by discussing in more details the case of systems with two delays and a single input in dimension three. In that case, we obtain a frequency-free approximate controllability characterization in the spirit of that of \cite{Chitour2020Approximate}.

The sequel of the paper is organized as follows. Section~\ref{sec:notation} introduces the notations used in this article, while our main results are stated in Section~\ref{sec:results}. In Section~\ref{sec:6} we establish the saturation of the range of the endpoint map for times larger than $d \Lambda_N$. Using Yamamoto's realization theory, our two main Hautus--Yamamoto criteria are proved in Section~\ref{section_Hautus_criteria}. Finally, we apply these criteria in Section~\ref{sec:applications} to deduce controllability criteria of Kalman type, recovering previous results in the literature and extending them to more general cases.

\section{Notation}
\label{sec:notation}
In this paper, we denote by $\mathbb{N}$ and $\mathbb{N}^*$ the sets of nonnegative and 
positive integers, respectively. The set $\{1,\dots,N \}$ is represented by $\llbracket 1,N\rrbracket$ for any 
$N \in \mathbb{N}^*$. We use $\mathbb{Z}$, $\mathbb{R}$, $\mathbb{C}$, $\mathbb{R}_+$, 
and $\mathbb{R}_-$ to denote the sets of relative integers, real numbers, complex numbers, 
nonnegative, and nonpositive reals respectively. For $p \in \mathbb{C}$, $\Re(p)$ and 
$\Im(p)$ represent the real and imaginary parts of $p$.
 For $N \in \mathbb{N}^*$ and $n=(n_1,\dots,n_N) \in \mathbb{N}^N$,  the length of the 
 $N$-tuple $n$ is denoted by $|n|$ and is equal to $n_1+ \cdots + n_N$.
For $\Lambda=(\Lambda_1,\dots,\Lambda_N) \in \mathbb{R}^N$, we write $\Lambda \cdot n:=n_1 \Lambda_1+ \cdots+ n_N \Lambda_N$. Given two positive integers $i$ and $j$, $\mathcal{M}_{i,j}(\mathbb{K})$ is the set of $i \times 
j$ matrices with coefficients in $\mathbb{K}=$ $\mathbb{R}$ or $\mathbb{C}$. For $A \in 
\mathcal{M}_{i,j}(\mathbb{K})$, we note $A^\ast$ its conjugate transpose matrix. We use 
$\|\cdot\|$ to denote a norm for every finite-dimensional space (over $\mathbb{K}$) and 
$\vertiii{\,\cdot\,}$ the induced norm for linear maps.

Otherwise stated, elements $x\in \mathbb{K}^i$ are considered as column vectors. The identity matrix in $\mathcal{M}_{i,i}(\mathbb{K})$ is denoted by $I_i$. For $M \in \mathcal{M}_{i,j}(\mathbb{K})$, $\rank M$ denotes the rank of $M$. Given a positive integer $k$, $A \in \mathcal{M}_{i,j}(\mathbb{K})$, and $B\in  \mathcal{M}_{i,k}(\mathbb{K})$, the bracket $\left[A,B\right]$ denotes the juxtaposition of the two matrices, which hence belongs to $\mathcal{M}_{i,j+k}(\mathbb{K})$.

Let $k \in \mathbb{N}^*$ and $q \in [1,+\infty)$. Given an interval $I$ of $\mathbb{R}$, $L^q(I,\mathbb{R}^k)$ represents the space of $q$-integrable functions on the interval $I$ with values in $\mathbb{R}^k$ endowed of the $L^q$-norm on $I$ denoted $\| \cdot \|_{I,\,q}$.
The space of $q$-integrable functions on compact subsets of $\mathbb{R}$ (respectively, $\mathbb{R}_+$) with values in $\mathbb{R}^k$ is denoted $L^q_{\rm loc}\left(\mathbb{R},\mathbb{R}^k\right)$ (respectively,  $L^q_{\rm loc}\left(\mathbb{R}_+,\mathbb{R}^k\right)$). The semi-norms
\[
\|\phi \|_{[0,a],q}:=\left(\int_0^a \vertii{\phi(t)}^q dt  \right)^{1/q},\qquad \phi \in L^q_{\rm loc}\left(\mathbb{R}_+,\mathbb{R}^k\right), \ a\ge 0,
\]
induce a topology on $L^q_{\rm loc}\left(\mathbb{R}_+,\mathbb{R}^k\right)$, which is then a Fr\'echet space.
For a linear operator $f$ we denote $\Ran f$ its range and $\overline{\Ran f}$ the closure of the range. More generally, if $F$ is a matrix-valued holomorphic function, we use $F(\mathbb{C})$ and $\overline{F(\mathbb{C})}$ to denote its image and the the closure of its image, respectively.

We next introduce the distributional framework needed in the paper. A detailed presentation 
with precise definitions can be found, e.g., in 
\cite{schwartz1966theorie,YamamotoRealization,yamamoto1989reachability}. We use   
$\mathcal{D}(\mathbb{R})$ to denote the space of $C^{\infty}$ functions defined on $\mathbb{R}$ with 
compact support, endowed with its canonical LF topology. We also use $\mathcal{D}'(\mathbb{R})$ to denote the space of continuous linear forms acting on $\mathcal{D}(\mathbb{R})$, i.e., the space of all distributions 
on $\mathbb{R}$, endowed with the strong dual topology of uniform convergence on bounded subsets of $\mathcal{D}(\mathbb{R})$. For $\alpha \in\mathcal{D}'(\mathbb{R})$ and $\psi \in \mathcal{D}(\mathbb{R})$, $\scalprod{\alpha,\psi}$ denotes the duality product. The support of a distribution $\alpha \in\mathcal{D}'(\mathbb{R}) $, denoted $\supp( \alpha)$, is the complement of the largest open set on which $\alpha$ is zero. The order of a distribution $\alpha \in\mathcal{D}'(\mathbb{R})$ is the smallest integer $p$ such that, for every compact set $K \subset \mathbb R$, there exists $C_K > 0$ such that
\begin{equation*}
\abs{\scalprod{\alpha,\psi}} \le C_K\, \underset{x \in K}{\sup}\, \abs{\psi^{(p)}(x)},
\end{equation*}
for all $\psi \in \mathcal D(\mathbb R)$ with compact support in $K$, where $\psi^{(p)}$ denotes the $p$-th derivative of $\psi$. We note $\delta_x \in \mathcal{D}'(\mathbb{R})$ the Dirac distribution at $x \in \mathbb{R}$. Notice that $\delta_x$ has order zero.
 
To deal with our controllability issues, we introduce the following subspaces of $\mathcal{D}'(\mathbb{R})$. We use $\mathcal{E}'(\mathbb{R}_{-})$ and $\mathcal{E}'(\mathbb{R}_{+})$ to denote  the spaces of distributions having compact support in $\mathbb{R}_{-}$ and $\mathbb{R}_{+}$, respectively. We denote by $M(\mathbb{R}_{-})$ and $M(\mathbb{R}_{+})$  the subspaces of $\mathcal{E}'(\mathbb{R}_{-})$ and $\mathcal{E}'(\mathbb{R}_{+})$ consisting of distributions of order zero. The spaces $M(\mathbb{R}_{-})$ and $M(\mathbb{R}_{+})$ can  be also characterized as the sets of Radon measures with compact support contained in $\mathbb{R}_{-}$ and $\mathbb{R}_{+}$, thanks to the Riesz representation theorem  (see, e.g., \cite[Theorem~6.19]{Rudin1987Real}). 
Let $\mathcal{D}'_{+}(\mathbb{R})$ be the space of distributions having support bounded on the left, which becomes an algebra when endowed with the convolution product~$*$. We also consider $M_+(\mathbb{R})$ as the space of Radon measures with support bounded on the left, or equivalently the subspace of $\mathcal{D}'_{+}(\mathbb{R})$ of distributions of order zero. With a slight abuse of notation, we also write $\mathcal D'(\mathbb R)$, $\mathcal E'(\mathbb R_-)$, $M(\mathbb R_-)$, $\mathcal D^\prime_+(\mathbb R)$, and $M_+(\mathbb R)$ to refer to sets of matrices whose entries belong to those respective spaces.

Given a Radon measure $\mu \in M_+(\mathbb{R})$, we use $\widehat{\mu}(p)$ to denote the two-sided Laplace transform of $\mu$ at frequency $p\in \mathbb{C}$, that is,
\begin{equation}
\label{laplace_transform_radon_measure}
\widehat{\mu}(p)=\int_{-\infty}^{+\infty} d\mu(t)e^{-pt},
\end{equation}
provided that the integral exists. In particular, the Laplace transform of a Dirac distribution $\delta_x$, $x \in \mathbb{R}$, is the holomorphic map
\begin{equation}
\label{laplace_transform_dirac}
\widehat{\delta}_x:p\mapsto e^{-p x},\qquad p\in \mathbb{C}.
\end{equation}
In the more general setting of a distribution $\alpha \in\mathcal{D}'(\mathbb{R})$, the Laplace transform $
\widehat{\alpha}(p)$ of $\alpha$ at $p\in\mathbb{C}$ is defined by 
$\scalprod{\alpha,e^{- \cdot p}}$, when this quantity makes sense, i.e., when the linear form $\alpha$ on $\mathcal D(\mathbb R)$ can be extended by continuity (in the standard topology of $C^\infty(\mathbb R)$) to the function $t \mapsto e^{- t p}$.

We now define the truncation to positive times of a distribution in $\mathcal D'_+(\mathbb R)$, following the presentation of \cite{YamamotoRealization,yamamoto1989reachability}. Let $\mathcal{D}(\mathbb{R}_+) \subset \mathcal D(\mathbb R)$ be the space of infinitely differentiable functions on $\mathbb{R}$ with compact support contained in $\mathbb{R}_+$ and $\mathcal{D}'(\mathbb{R}_+)$ be its topological dual space. For $\alpha \in \mathcal{D}'_{+}(\mathbb{R})$, we define its truncation $\pi\alpha \in \mathcal D'(\mathbb R_+)$ by
\begin{equation}
\label{eq:pi}
\scalprod{\pi \alpha, \psi} = \scalprod{\alpha, \psi},\qquad \psi \in \mathcal{D}(\mathbb{R}_+).
\end{equation}
We have that $\pi \alpha$ is a well-defined element of $\mathcal{D}'(\mathbb{R}_+)$. Furthermore, $\pi$ is continuous with respect to the strong dual topology of $\mathcal{D}'_{+}(\mathbb{R})$ and $\mathcal{D}'(\mathbb{R}_+)$. Note that $\pi$ truncates distributions to positive times only and, in particular, $\pi\delta_0 = 0$. For further properties concerning the operator $\pi$, see Lemma~\ref{lem:sumlemmasYamamoto}.

\section{Description of the problem and statement of the controllability criteria}
\label{sec:results}

We start by defining solutions of System~\eqref{system_lin_formel2} considered in this paper. 
The following proposition can be easily obtained by a direct step-by-step construction of the 
solution, as given in 
 \cite[Proposition~3.2]{Chitour2016Stability} (cf.~also \cite[Remark 2.3]{Mazanti2017Relative}). Unless otherwise stated, $q$ denotes any real number belonging to the interval $[1,+\infty)$.

\begin{proposition}
\label{Prop1:main_result}
Let $T > 0$, $u \in L^q([0,T],\mathbb{R}^m)$, and $x_0 \in L^q([-\Lambda_N,0],\mathbb{R}^d)$. There exists a unique solution $x \in L^q([-\Lambda_N,T],\mathbb{R}^d)$ such that $x(\theta)=x_0(\theta)$ for all $\theta \in [-\Lambda_N,0]$ and $x(\cdot)$ satisfies Equation~\eqref{system_lin_formel2} for all $t \in [0,T]$. 
\end{proposition}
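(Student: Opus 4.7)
The plan is to construct the solution by induction over successive time intervals of length equal to the smallest delay $\Lambda_1$, and then to obtain uniqueness from the same inductive scheme applied to the difference of two candidate solutions.

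First I would set $t_k := k\Lambda_1$ and prove by induction on $k \ge 0$ the following claim: there exists a unique $x \in L^q([-\Lambda_N, t_k \wedge T], \mathbb{R}^d)$ coinciding with $x_0$ on $[-\Lambda_N, 0]$ and satisfying \eqref{system_lin_formel2} almost everywhere on $[0, t_k \wedge T]$. The base case $k = 0$ is given by the initial datum. For the inductive step, assume $x$ has already been constructed on $[-\Lambda_N, t_k]$ with $t_k < T$. Since $\Lambda_j \ge \Lambda_1$ for every $j \in \llbracket 1, N\rrbracket$, for $t \in [t_k, t_{k+1} \wedge T]$ one has $t - \Lambda_j \le t_{k+1} - \Lambda_1 = t_k$, so all shifted arguments $t - \Lambda_j$ lie in $[-\Lambda_N, t_k]$, where $x$ is already defined. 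I therefore define
\[
x(t) := \sum_{j=1}^N A_j\, x(t - \Lambda_j) + B u(t), \qquad t \in [t_k, t_{k+1} \wedge T].
\]
Each function $t \mapsto x(t - \Lambda_j)$ is an $L^q$ translate of the already-built $x$, and $B u$ is in $L^q([t_k, t_{k+1} \wedge T], \mathbb{R}^d)$, so the right-hand side defines an $L^q$ function on $[t_k, t_{k+1} \wedge T]$. Glueing with the previous step yields an $L^q$ extension on $[-\Lambda_N, t_{k+1} \wedge T]$.

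After finitely many steps (namely $\lceil T/\Lambda_1\rceil$), the interval $[-\Lambda_N, T]$ is fully covered, giving existence in $L^q([-\Lambda_N, T], \mathbb{R}^d)$. For uniqueness, given two such solutions $x_1, x_2$, the difference $y := x_1 - x_2$ lies in $L^q([-\Lambda_N, T], \mathbb{R}^d)$, vanishes on $[-\Lambda_N, 0]$, and satisfies $y(t) = \sum_{j=1}^N A_j y(t - \Lambda_j)$ for almost every $t \in [0, T]$. Running the same induction shows inductively that $y$ vanishes almost everywhere on each $[t_k, t_{k+1}]$, hence on $[-\Lambda_N, T]$.

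There is no real obstacle here: the construction is elementary and the only point to notice is the choice of step size $\Lambda_1$, which guarantees that all delayed arguments appearing on the right-hand side of \eqref{system_lin_formel2} have already been resolved at each stage of the induction. The argument is essentially the one of \cite[Proposition~3.2]{Chitour2016Stability}; I would simply cite it rather than reproduce the full calculation.
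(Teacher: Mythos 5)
Your proof is correct and is exactly the step-by-step (induction over intervals of length $\Lambda_1$) construction that the paper itself invokes by citing \cite[Proposition~3.2]{Chitour2016Stability}; the key observation that all delayed arguments $t-\Lambda_j$ fall into the already-constructed region is the same. No issues.
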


From now on, given  $T > 0$, $u \in L^q([0,T],\mathbb{R}^m)$, and $x_0 \in L^q([-\Lambda_N,0],\mathbb{R}^d)$, we write $x \in L^q([-\Lambda_N,T],\mathbb{R}^d)$ to denote the solution given by Proposition~\ref{Prop1:main_result}. For all $t \in [0,T]$, we denote by $x_t \in L^q([-\Lambda_N,0],\mathbb{R}^d)$ the function defined by $x_t(\theta):=x(t+\theta)$, $\theta \in [-\Lambda_N,0]$. Using such a notation, we introduce in the next definition the standard notions of approximate and exact controllability in finite time $T>0$ that we are interested to study in this paper (see, for instance, \cite[Chapter~2]{coron}).

\begin{definition}\label{def:Lq-cont} System~\eqref{system_lin_formel2} is said to be:
\begin{enumerate}[1)]
\item \emph{$L^q$ approximately controllable in time $T>0$} if for every $x_0,\phi \in L^q([-\Lambda_N,0],\mathbb{R}^d)$ and $\epsilon>0$, there exists $u \in L^q([0,T],\mathbb{R}^m)$ such that 
\[
\|x_T- \phi\|_{[-\Lambda_N,0],q} < \epsilon;
\]
\item \emph{$L^q$ exactly controllable in time $T>0$} if for every $x_0,\phi \in L^q([-\Lambda_N,0],\mathbb{R}^d)$, there exists $u \in L^q([0,T],\mathbb{R}^m)$ such that 
\[
x_T= \phi.
\]
\end{enumerate}
\end{definition}

\begin{remark}
If System~\eqref{system_lin_formel2} is $L^q$ approximately (respectively, exactly) controllable in time $T>0$ then it is $L^q$ approximately (respectively, exactly) controllable in any time $T'\geq T$.
\end{remark}

We present below weaker notions of controllability, corresponding to those used by Yamamoto \cite{yamamoto1989reachability}, in which the controllability time is not fixed in advance, but may depend on the target to be reached, and the initial state is always assumed to be the origin.

\begin{definition}\label{def:contr-from-origin} System~\eqref{system_lin_formel2} is said to be:
\begin{enumerate}[1)]
\item \emph{$L^q$ approximately controllable (from the origin)} if for $x_0\equiv 0$, every $\phi \in L^q([-\Lambda_N,0],\mathbb{R}^d)$, and $\epsilon>0$, there exist $T_{\epsilon,\phi}>0$ and $u \in L^q([0,T_{\epsilon,\phi}],\mathbb{R}^m)$ such that
\[
\|x_{T_{\epsilon,\phi}}- \phi\|_{[-\Lambda_N,0],q} < \epsilon;
\]
\item \emph{$L^q$ exactly controllable (from the origin)} if for $x_0 \equiv 0$ and every  $\phi \in L^q([-\Lambda_N,0],\mathbb{R}^d)$, there exist $T_{\phi}>0$ and $u \in L^q([0,T_{\phi}],\mathbb{R}^m)$ such that
\[
x_{T_{\phi}}= \phi.
\]
\end{enumerate}
\end{definition}

Note that obvious implications hold true between the different notions provided in Definitions~\ref{def:Lq-cont} and \ref{def:contr-from-origin}, i.e., approximate (respectively, exact) controllability in time $T$ implies approximate (respectively, exact) controllability from the origin.
One of the results of the present paper is that one actually has equivalence between such notions, as stated in the next theorem.

\begin{theorem}
\label{lem:3}
Let  $q\in [1,+\infty)$. System~\eqref{system_lin_formel2} is $L^q$ approximately (respectively, exactly) controllable from the origin if and only if it is $L^q$ approximately (respectively, exactly) controllable in time $T = d \Lambda_N$.
\end{theorem}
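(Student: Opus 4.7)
The implication ``$L^q$ controllability in time $d\Lambda_N$ implies $L^q$ controllability from the origin'' is immediate: it suffices to choose $T_{\varepsilon,\phi}=T_\phi=d\Lambda_N$ in Definition~\ref{def:contr-from-origin}. The substance of the theorem lies in the converse, which I would derive from a \emph{saturation property} of the reachable set.

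For $T>0$, introduce the reachable set from the zero initial condition
\[
\mathcal R_T^q := \{x_T : u\in L^q([0,T],\mathbb R^m),\ x_0\equiv 0\}\subseteq L^q([-\Lambda_N,0],\mathbb R^d).
\]
By linearity of the evolution in $(x_0,u)$, $L^q$ exact (resp.\ approximate) controllability in time $T$ is equivalent to $\mathcal R_T^q=L^q([-\Lambda_N,0],\mathbb R^d)$ (resp.\ to $\overline{\mathcal R_T^q}=L^q([-\Lambda_N,0],\mathbb R^d)$), while the corresponding notions from the origin in Definition~\ref{def:contr-from-origin} characterize the same property for $\bigcup_{T>0}\mathcal R_T^q$. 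The family $\{\mathcal R_T^q\}_{T>0}$ is moreover monotone in $T$: given $T_1\le T_2$ and $u_1\in L^q([0,T_1],\mathbb R^m)$ reaching $\phi$ at time $T_1$, the left-padded control $u_2(\cdot)=u_1(\cdot-(T_2-T_1))\mathbf 1_{[T_2-T_1,T_2]}$ keeps the state at $0$ on $[0,T_2-T_1]$ and then reproduces the first evolution shifted in time, so $u_2$ reaches $\phi$ at time $T_2$. Consequently the theorem reduces to the \emph{saturation identity}
\[
\mathcal R_T^q=\mathcal R_{d\Lambda_N}^q\quad\text{for every }T\ge d\Lambda_N,
\]
with the inclusion $\supseteq$ already provided by monotonicity.

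For the nontrivial inclusion I would invoke the explicit representation of the endpoint map derived in \cite{Chitour2020Approximate}: with $x_0\equiv 0$,
\[
x(t)=\sum_{\substack{n\in\mathbb N^N\\ \Lambda\cdot n\le t}}\Xi_n B\,u(t-\Lambda\cdot n),
\]
where the matrices $\Xi_n\in\mathcal M_{d,d}(\mathbb R)$ are defined by $\Xi_0=I_d$ and the recursion $\Xi_n=\sum_{j=1}^N A_j\Xi_{n-e_j}$ (with the convention $\Xi_n=0$ as soon as a coordinate of $n$ becomes negative). Parametrizing $s=T+\theta\in[T-\Lambda_N,T]$, the value $x_T(\theta)$ becomes a finite linear combination of terms $\Xi_n B\,u(s-\Lambda\cdot n)$. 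A Cayley--Hamilton-type argument then shows that the vectors $\{\Xi_n Be_i\}_{n\in\mathbb N^N,\,i\in\llbracket 1,m\rrbracket}$, which span a subspace of $\mathbb R^d$ of dimension at most $d$, are already generated by those with $|n|\le d-1$. Re-expressing each $\Xi_n B$ with $|n|\ge d$ through these finite linear relations, one rewrites $x_T$ as a combination of at most $d$ ``channels'' $\Xi_{n'}B\tilde u_{n'}(\theta)$ with $|n'|\le d-1$, where each $\tilde u_{n'}$ is a finite $L^q$ combination of translates of $u$. A careful combinatorial arrangement then places these channels onto disjoint slots of $[0,d\Lambda_N]$, producing a new control $u'\in L^q([0,d\Lambda_N],\mathbb R^m)$ whose endpoint equals $x_T$.

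The main obstacle I anticipate is the Cayley--Hamilton-type step: in the multi-delay setting the $\Xi_n$ are noncommutative symmetric sums of products of $A_1,\dots,A_N$ rather than mere powers of a single matrix, so one cannot directly appeal to scalar Cayley--Hamilton on a single ``controllability pair''. The argument must instead track the precise linear dependencies among the $\Xi_n B$ as $|n|$ grows, using identities such as $A_j\Xi_n=\Xi_{n+e_j}-\sum_{\ell\neq j}A_\ell\Xi_{n+e_j-e_\ell}$ to close the induction at the right depth $d-1$. A secondary technical point is verifying that the reindexed control stays in $L^q$ (not merely in some larger space of distributions) and fits exactly into $[0,d\Lambda_N]$, which is precisely where the sharp upper bound $d\Lambda_N$ of the theorem emerges rather than some larger time.
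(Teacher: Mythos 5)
Your reduction of the theorem to the saturation identity $\Ran E_q(T)=\Ran E_q(d\Lambda_N)$ for $T\ge d\Lambda_N$, via monotonicity of the reachable sets and the explicit representation $x_T(\theta)=\sum_{\Lambda\cdot n\le T+\theta}\Xi_n B\,u(T+\theta-\Lambda\cdot n)$, is exactly the paper's strategy. The gap is in how you carry out the saturation step. You propose to use only the \emph{vector-level} fact that each $\Xi_n B$ with $|n|\ge d$ lies in the span of the $\Xi_{n'}B$ with $|n'|\le d-1$, and then to ``place these channels onto disjoint slots of $[0,d\Lambda_N]$.'' This last step fails: the endpoint map at time $d\Lambda_N$ feeds a \emph{single} control $u'$ into all channels simultaneously, $E_q(d\Lambda_N)u'(\theta)=\sum_{n'}\Xi_{n'}B\,u'(d\Lambda_N+\theta-\Lambda\cdot n')$, and the time windows $d\Lambda_N+\theta-\Lambda\cdot n'$ for distinct $n'$ overlap (the shifts $\Lambda\cdot n'$ are dense-ish combinations of the delays, not separated). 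You cannot prescribe the contribution of each $\Xi_{n'}B$ independently, so coefficients $c_{n,n'}$ that depend on $n$ in an uncontrolled way do not reassemble into a single $L^q$ control on $[0,d\Lambda_N]$.

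What makes the argument work in the paper is a \emph{stronger, uniform} Cayley--Hamilton identity: writing $P(t)=\det\bigl(I_d-\sum_{j}t_jA_j\bigr)=\sum_{|k|\le d}\alpha_k t^k$ and using the adjugate identity together with the Neumann series $\bigl(I_d-\sum_j t_jA_j\bigr)^{-1}=\sum_n\Xi_n t^n$, one gets $\Xi_n=-\sum_{0<|k|\le d}\alpha_k\Xi_{n-k}$ for all $|n|\ge d$, with \emph{scalar coefficients $\alpha_k$ independent of $n$}. Substituting this into the tail $\sum_{\Lambda\cdot n>d\Lambda_N+\theta}\Xi_nB\,u(T+\theta-\Lambda\cdot n)$ and reindexing $n'=n-k$ turns the whole tail into $E_q(d\Lambda_N)u_2$ for the explicit single control $u_2(s)=-\sum_{0<|k|\le d}\alpha_k\,u(s-\Lambda\cdot k+T-d\Lambda_N)$ --- no slot separation is needed because the same shift-and-weight pattern applies to every channel. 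Two further points you omit: this computation only works for $T\in[d\Lambda_N,d\Lambda_N+\delta]$ with $\delta$ small enough that the reindexed multi-indices satisfy $\Lambda\cdot n'\le d\Lambda_N+\theta$, and one then propagates to all $T\ge d\Lambda_N$ by iterating with the flow operator $\Upsilon_q(t)$ (showing $\Upsilon_q(t)\Ran E_q(d\Lambda_N)\subset\Ran E_q(d\Lambda_N)$ and using the variation-of-constants formula). Your proposed local identities $A_j\Xi_n=\Xi_{n+e_j}-\sum_{\ell\ne j}A_\ell\Xi_{n+e_j-e_\ell}$ do not by themselves produce the $n$-independent recursion; the determinant $P(t)$ is the essential ingredient.
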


The proof of Theorem~\ref{lem:3} is provided in Section~\ref{sec:6}.

\subsection{Hautus--Yamamoto criteria}

Before stating our two main theorems, which give necessary and sufficient (respectively, necessary) Hautus--Yamamoto criteria to ensure the approximate (respectively, exact) controllability in time $d \Lambda_N$ of System~\eqref{system_lin_formel2}, let us introduce the matrix-valued holomorphic map 
\begin{equation}\label{eq:defH}
H(p):=I_d- \sum_{j=1}^N e^{- p \Lambda_j} A_j, \qquad p \in \mathbb{C}.
\end{equation}

The holomorphic function $H$ defined above arises when one considers solutions of \eqref{system_lin_formel2} with $u \in L^q(\mathbb R_+, \mathbb R^m)$ and an initial condition $x(\theta) = 0$ for $\theta \in [-\Lambda_N, 0]$. Indeed, extending $x$ and $u$ by zero for all negative times, \eqref{system_lin_formel2} is satisfied for every $t \in \mathbb R$ and then, taking the two-sided Laplace transform, we obtain that
\begin{equation}
\label{eq:relation_simple_H1}
\widehat{x}(p)= \mathcal{H}(p) \widehat{u}(p),
\end{equation}
for $p$ in some  right-half plane of $\mathbb{C}$, where $ \mathcal{H}(p)=H(p)^{-1} B$ is the transfer function of System~\eqref{system_lin_formel2}, i.e., the matrix describing the linear relation between the frequencies of the input and of the output. We notice that the Laplace transform of $x$ exists for all frequencies in a suitable  right-half plane of $\mathbb{C}$ because the solutions of \eqref{system_lin_formel2} grow at most exponentially. 

Our main results are given next.

\begin{theorem}
\label{main_result1}
{Let $q\in [1,+\infty)$.} System~\eqref{system_lin_formel2} is $L^q$ approximately controllable in time $d \Lambda_N$ if and only if 
the two following conditions hold true:
\begin{enumerate}[i)]
\item \label{assumption1bis}$\rank\left[H(p),B\right]=d$ for every $p \in \mathbb C$,
\item \label{assumption2bis} $\rank[A_N,B]=d$.
\end{enumerate}
\end{theorem}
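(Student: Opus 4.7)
The plan is to combine Theorem~\ref{lem:3} with a specialization of Yamamoto's realization theory. By Theorem~\ref{lem:3}, $L^q$ approximate controllability in time $d\Lambda_N$ is equivalent to $L^q$ approximate controllability from the origin, so it suffices to characterize the latter. I would first cast \eqref{system_lin_formel2} in the Yamamoto framework as a convolution equation $P * x = Q * u$, where
\[
P := \delta_0 I_d - \sum_{j=1}^N \delta_{\Lambda_j} A_j \in M_+(\mathbb R),\qquad Q := \delta_0 B \in M_+(\mathbb R),
\]
so that $\widehat P(p) = H(p)$ and $\widehat Q(p) = B$. By the characterization recalled in the introduction, $L^q$ approximate controllability from the origin is equivalent to the approximate left coprimeness of $(P,Q)$, i.e., the existence of sequences of matrix-valued compactly supported distributions $(R_n)_n$ and $(S_n)_n$ such that $Q * R_n + P * S_n \to \delta_0 I_d$ in $\mathcal D'_+(\mathbb R)$.

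For necessity, condition (i) follows by a standard frequency-domain duality. If $\rank[H(p_0), B] < d$ for some $p_0 \in \mathbb C$, pick a nonzero $v \in \mathbb C^d$ with $v^\ast H(p_0) = 0$ and $v^\ast B = 0$. Using Theorem~\ref{lem:3} to support all candidate approximants in the fixed interval $[0, d\Lambda_N]$ (so their Laplace transforms are entire and converge locally uniformly), evaluating the approximate Bézout identity at $p_0$ and left-multiplying by $v^\ast$ forces $v^\ast \to 0$, a contradiction. Condition (ii) plays the role of a Hautus condition at infinity. Indeed, rescaling by $e^{p\Lambda_N}$ yields
\[
e^{p\Lambda_N} H(p) = e^{p\Lambda_N} I_d - \sum_{j<N} e^{p(\Lambda_N - \Lambda_j)} A_j - A_N \; \xrightarrow[\Re(p)\to -\infty]{}\; -A_N,
\]
and since rank is invariant under nonzero scaling, the presence of $v^\ast \ne 0$ with $v^\ast A_N = v^\ast B = 0$ would allow one to find a sequence $(p_n)$ with $\Re(p_n)\to-\infty$ along which the approximate Bézout identity is violated, by the same duality mechanism as for (i).

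For sufficiency, assuming (i) and (ii), one has to construct sequences $(R_n), (S_n)$ realizing the approximate Bézout identity $Q * R_n + P * S_n \to \delta_0 I_d$. The natural route is to exploit the pointwise invertibility of $[H(p), B]$ given by (i) in the Paley--Wiener--Ehrenpreis framework, and then assemble these pointwise right inverses into compactly supported matrix-valued distributions whose Laplace transforms have controlled growth. The main obstacle lies precisely in this last step: pointwise invertibility in the whole plane is not sufficient to produce bona fide distributional Bézout approximants, because one needs growth estimates of Paley--Wiener type on the inverses. Condition (ii), by controlling the asymptotic behavior of $H(p)$ as $\Re(p)\to -\infty$, is exactly what furnishes these uniform estimates and closes the argument; in effect, the proof reduces to solving an approximate corona-type problem in the quasi-polynomial algebra generated by $1$ and $e^{-p\Lambda_1},\dots,e^{-p\Lambda_N}$.
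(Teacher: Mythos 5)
Your overall strategy---reduce to controllability from the origin via Theorem~\ref{lem:3} and then pass to a realization-theoretic/Bézout formulation---is the same as the paper's, but the core of the argument is missing. For sufficiency you acknowledge that the whole difficulty is to assemble the pointwise right inverses of $[H(p),B]$ into bona fide compactly supported approximants with Paley--Wiener-type growth control, and you then assert that condition~(ii) ``furnishes these uniform estimates and closes the argument'' without carrying out any construction. That is precisely the hard analytic content of the theorem; the paper does not reprove it either, but discharges it by invoking Yamamoto's quasi-reachability criterion \cite[Corollary~4.10]{yamamoto1989reachability} (this is Theorem~\ref{th_Yamamoto1}), after verifying its hypotheses for the pair $(Q,P)$ of \eqref{eq:defQ}--\eqref{eq:defP}, namely the decomposition $Q=\delta_0 A_N+Q_1$ with $\supp Q_1$ bounded away from $0$. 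As written, your sufficiency direction is a restatement of the problem, not a proof: you must either cite that result explicitly or solve the approximate corona problem yourself.

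There are two further gaps on the necessity side. First, your argument for~(ii) evaluates the approximate Bézout identity along a sequence $(p_n)$ with $\Re(p_n)\to-\infty$, but the Laplace transforms $\widehat R_n$, $\widehat S_n$ of measures supported in $[0,d\Lambda_N]$ grow like $e^{-\Re(p)\,d\Lambda_N}$ as $\Re(p)\to-\infty$, and you additionally face a double limit (convergence as $n\to\infty$ at fixed $p$ versus evaluation at a point $p_n$ depending on $n$); the ``same duality mechanism as for~(i)'' therefore does not apply without uniform estimates you do not have. Note that~(ii) is genuinely an extra condition not implied by~(i), so this step cannot be waved away. A direct argument is available: if $v\neq 0$ satisfies $v^TA_N=v^TB=0$, then every reachable state $\phi=x_T$ obeys the continuous linear constraint $v^T\phi(\theta)=\sum_{j=1}^{N-1}v^TA_j\phi(\theta-\Lambda_j)$ for $\theta\in(-\varepsilon,0]$ with $\varepsilon<\Lambda_N-\Lambda_{N-1}$, whose kernel is a proper closed subspace of $L^q([-\Lambda_N,0],\mathbb R^d)$, so the reachable set cannot be dense. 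Second, you use the equivalence between $L^q$ approximate controllability and approximate left coprimeness for arbitrary $q\in[1,+\infty)$, whereas the realization-theoretic results you are implicitly relying on are stated for $L^2$; the paper bridges this with an explicit density argument (the two cases $q_2\ge q_1$ and $q_2\le q_1$ in the proof of Theorem~\ref{th_Yamamoto1}), and without such a step your conclusion is not uniform in $q$.
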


\begin{theorem}
\label{main_result2} If System~\eqref{system_lin_formel2} is $L^1$ exactly controllable in time $d \Lambda_N$ then the two following conditions hold true:
\begin{enumerate}[i)]
\item \label{assumption1-L1}$\rank\left[M,B\right]=d$ for every  $M\in \overline{H(\mathbb{C})}$,
\item \label{assumption2-L1} $\rank[A_N,B]=d$.
\end{enumerate}
\end{theorem}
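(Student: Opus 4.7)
The plan is to leverage the Bézout-identity characterization of $L^1$ exact controllability provided by Yamamoto's realization theory (Theorem~\ref{prop_fond_exact_controllability_L1}), deduce condition~(i) by a compactness argument in the frequency domain, and obtain condition~(ii) from the weaker approximate controllability via Theorem~\ref{main_result1}.

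First I would invoke Theorem~\ref{lem:3} together with Theorem~\ref{prop_fond_exact_controllability_L1} to recast $L^1$ exact controllability in time $d\Lambda_N$ as the existence of matrix-valued Radon measures $R$ and $S$ of suitable sizes, compactly supported in $[0,d\Lambda_N]$, satisfying
\begin{equation*}
B\delta_0 * R + P * S = \delta_0 I_d, \qquad P := \delta_0 I_d - \sum_{j=1}^N A_j \delta_{\Lambda_j}.
\end{equation*}
Applying the two-sided Laplace transform converts convolution into pointwise product, yielding the entire matricial identity
\begin{equation*}
B\widehat{R}(p) + H(p)\widehat{S}(p) = I_d, \qquad p \in \mathbb{C}.
\end{equation*}
Since $R$ and $S$ are supported in $[0,d\Lambda_N]$, the elementary estimate $\|\widehat{R}(p)\| \le \|R\|\, e^{d\Lambda_N \max(0,-\Re p)}$ (and similarly for $\widehat S$) ensures that $\widehat R$ and $\widehat S$ are uniformly bounded on every half-plane $\{\Re p \ge -c\}$.

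For condition~(i), fix $M \in \overline{H(\mathbb{C})}$ and pick a sequence $(p_n) \subset \mathbb{C}$ with $H(p_n) \to M$. I would first show that $\Re p_n$ is necessarily bounded below: let $k$ be the largest index with $A_k \neq 0$ (the case where every $A_j = 0$ is trivial, since then $H \equiv I_d$); the inequalities $\Lambda_k > \Lambda_j$ for $j<k$ give $e^{\Re p(\Lambda_k - \Lambda_j)} \to 0$ as $\Re p \to -\infty$, so $H(p)/e^{-p\Lambda_k} \to -A_k$ uniformly in $\Im p$, and since $|e^{-p\Lambda_k}| \to \infty$ and $A_k \neq 0$, this forces $\|H(p)\| \to \infty$. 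Hence a subsequence with $\Re p_n \to -\infty$ would be incompatible with the convergence of $(H(p_n))$, so $\Re p_n$ must stay bounded below. The Laplace-transform estimate above then gives the boundedness of $(\widehat{R}(p_n), \widehat{S}(p_n))$, and a further subsequence converges to some $(R_\infty, S_\infty)$; passing to the limit in the Bézout identity yields $B R_\infty + M S_\infty = I_d$, which proves $\rank[M, B] = d$.

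Condition~(ii) is immediate: $L^1$ exact controllability in time $d\Lambda_N$ implies $L^1$ approximate controllability in the same time, and Theorem~\ref{main_result1} then yields $\rank[A_N, B] = d$. The main non-trivial ingredient of the plan is Theorem~\ref{prop_fond_exact_controllability_L1} itself, whose proof lies outside the present sketch; once the Bézout characterization over the Radon measure algebra is granted, the remaining work reduces to the dominance analysis of $H(p)$ as $\Re p \to -\infty$ and the elementary compactness argument above.
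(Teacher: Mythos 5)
Your proposal follows essentially the same route as the paper: reduce to time $d\Lambda_N$ via Theorem~\ref{lem:3}, invoke the Bézout characterization of Theorem~\ref{prop_fond_exact_controllability_L1}, pass to Laplace transforms, and run a boundedness-plus-compactness argument in the frequency domain (this is exactly the content of the paper's Proposition~\ref{prop_nece_bezout}, combined with Lemma~\ref{lem:sous_suite1} and Propositions~\ref{prop:lienQH} and~\ref{prop:eq_Q_corona_thm}, which you shortcut by working directly with $H$ and the rank condition rather than the corona-type lower bound on $\widehat{Q}$); condition~\ref{assumption2-L1} via Theorem~\ref{main_result1} is verbatim the paper's argument. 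One slip to correct: Theorem~\ref{prop_fond_exact_controllability_L1} produces $R,S$ with entries in $M(\mathbb{R}_-)$, so their Laplace transforms are bounded on left half-planes $\{\Re p\le c\}$, not on $\{\Re p\ge -c\}$ as your positive-time support convention $[0,d\Lambda_N]$ would give (time reversal would replace $H(p)$ by $H(-p)$ in the transformed identity, so you cannot have both positive-time supports and $H(p)$ as written). Consequently, the bound you establish on $\Re p_n$ (bounded below) points the wrong way for the boundedness of $\widehat R(p_n)$, $\widehat S(p_n)$. The fix is immediate: if $\Re p_n\to+\infty$ along a subsequence then $H(p_n)\to I_d$, so $M=I_d$ and $\rank[M,B]=d$ holds trivially; otherwise $(p_n)$ admits a subsequence lying in a bounded vertical strip, on which $\widehat R$, $\widehat S$, and the factor $e^{p\Lambda_N}$ relating $\widehat Q$ to $H$ are all bounded, and your limit argument then goes through unchanged.
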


Theorems~\ref{main_result1} and \ref{main_result2} are proved in Sections~\ref{subsec:Approximate_contr_HY} and \ref{subsec:Exact_contr_HY}, respectively.

Theorem~\ref{main_result1} is a complete characterization of $L^q$ approximate controllability, providing a necessary and sufficient condition which can be seen as the counterpart for difference equations of Hautus controllability criterion. In particular, the property of $L^q$ approximate controllability in time $d \Lambda_N$ does not depend on $q\in [1, +\infty)$, in the sense that if it holds for some $q\in [1, +\infty)$ then it holds for all of them.
On the contrary, Theorem~\ref{main_result2} provides only a necessary condition for $L^q$ exact controllability in the case $q = 1$. We expect in fact that the above condition is also sufficient for any $q$ and, in that direction, we propose the following conjecture.

\begin{conjecture}
\label{conj:HY-exact}
Let $q\in [1,+\infty)$. System~\eqref{system_lin_formel2} is $L^q$ exactly controllable in time $d \Lambda_N$ if and only if the two following conditions hold true:
\begin{enumerate}[i)]
\item\label{assumption1-conject} $\rank\left[M,B\right]=d$ for every  $M\in \overline{H(\mathbb{C})}$,
\item\label{assumption2-conject} $\rank[A_N,B]=d$.
\end{enumerate}
\end{conjecture}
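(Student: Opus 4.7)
My plan is to reduce the conjecture, via Yamamoto's realization framework, to a Bézout identity in the convolution algebra of compactly supported matrix Radon measures, and then to invoke a corona-type theorem for that algebra. Concretely, I would rely on Theorem~\ref{prop_fond_exact_controllability_L1} to reformulate $L^1$ exact controllability in time $d\Lambda_N$ as the solvability, for the matrix measures
\[
P=\delta_0 I_d-\sum_{j=1}^N A_j\delta_{\Lambda_j},\qquad Q=B\delta_0
\]
built from the data of System~\eqref{system_lin_formel2}, of $Q*R+P*S=\delta_0 I_d$ for some $R,S\in M_+(\mathbb{R})$ with support in $[0,d\Lambda_N]$, where the support bound is furnished by the saturation Theorem~\ref{lem:3}. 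Taking the two-sided Laplace transform, together with $\widehat P=H$ and $\widehat Q=B$, this becomes the pointwise identity $B\widehat R(p)+H(p)\widehat S(p)=I_d$ for all $p\in\mathbb{C}$.

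\smallskip

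\noindent\textbf{Necessity.} The implication ``exact controllability $\Longrightarrow$ \ref{assumption1-conject}--\ref{assumption2-conject}'' combines two steps. Exact controllability in time $d\Lambda_N$ trivially entails $L^q$ approximate controllability in the same time, so Theorem~\ref{main_result1} delivers $\rank[H(p),B]=d$ for every $p\in\mathbb C$ together with $\rank[A_N,B]=d$. To upgrade the first condition to the whole closure $\overline{H(\mathbb C)}$, I would mirror the scheme used to prove Theorem~\ref{main_result2}: exact controllability provides a bounded right inverse of the endpoint map, whence a uniform solvability estimate that forbids any nonzero $v\in\mathbb C^d$ from simultaneously satisfying $v^*M=0$ and $v^*B=0$ for $M=\lim_n H(p_n)$, since the corresponding sequence of frequency-domain test functionals would violate the uniform bound.

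\smallskip

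\noindent\textbf{Sufficiency and main obstacle.} For the converse, conditions~\ref{assumption1-conject}--\ref{assumption2-conject} translate precisely into a corona-type hypothesis on $[H(p),B]$: uniformly full row rank along all sequences where $H(p)$ stays bounded (condition~\ref{assumption1-conject}), together with the condition at the ``point at infinity'' encoded by the asymptotic $H(p)/(-e^{-p\Lambda_N})\to A_N$ as $\Re p\to-\infty$ (condition~\ref{assumption2-conject}). The hard step is to deduce from this corona hypothesis the existence of the Bézout pair $(R,S)$ in $M_+(\mathbb R)$: classical corona theorems (Carleson's for $H^\infty(\mathbb D)$, Fuhrmann's matrix generalisations, and results on the Wiener algebra on $\mathbb R$) do not apply directly, because the Laplace transforms here are almost-periodic on the imaginary axis rather than bounded holomorphic on a half-plane, and the Gelfand spectrum of the algebra involves a point at infinity not captured by the ordinary matrix closure of $H(\mathbb C)$. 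I expect progress will require either a corona argument tailored to this almost-periodic measure setting, exploiting the finite number of delays and the compact support granted by Theorem~\ref{lem:3}, or an explicit algebraic construction of $R,S$ from $A_1,\dots,A_N,B$ and $\Lambda_1,\dots,\Lambda_N$. Once the $L^1$ case is settled, a density/regularisation argument in the spirit of the $C^k$ result Theorem~\ref{thm:Ck} should transfer exact controllability to every $L^q$, $q\in[1,+\infty)$. The rationally dependent case (which reduces to a single delay and hence to a polynomial matrix Bézout identity) and the $N=d=2$, $m=1$ situation of \cite{Chitour2020Approximate} offer natural warm-ups.
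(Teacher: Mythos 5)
This statement is posed in the paper as a \emph{conjecture}, not a theorem: the paper does not prove it, and your proposal does not close the gap either. The genuinely missing step is exactly the one you yourself flag as the ``main obstacle'': passing from the corona-type hypothesis (conditions \ref{assumption1-conject}--\ref{assumption2-conject}, equivalently the uniform lower bound in Proposition~\ref{Prop2:main_result}) to an actual Bézout pair $(R,S)$ with entries in $M(\mathbb{R}_-)$ solving $Q*R+P*S=\delta_0 I_d$. The paper isolates precisely this as Conjecture~\ref{conjecture1} and leaves it open; no known corona theorem applies to this almost-periodic measure algebra, and the pointwise Laplace-transform identity you write down is only a necessary consequence of the convolution identity, not a reformulation of it. Your proposal reproduces the paper's own reduction (Theorem~\ref{prop_fond_exact_controllability_L1} combined with the saturation Theorem~\ref{lem:3}) and then stops exactly where the paper stops. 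What the paper actually establishes in the sufficiency direction is only the weaker Theorem~\ref{thm:Ck} (exact steering between $C^k$-admissible data), obtained from Yamamoto's distributional Bézout solvability result rather than from a Radon-measure one.

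Two further points. First, your necessity argument is complete only for $q=1$. The paper's route to the closure condition (Theorem~\ref{main_result2}) goes through the equivalence between $L^1$ exact controllability and solvability of the Bézout identity, and the case $q=1$ is special there: the proof of Theorem~\ref{prop_fond_exact_controllability_L1} extracts the measure $S$ by weak-$*$ compactness of bounded sets of Radon measures, using that distributional convergence of the approximating densities forces a uniform $L^1$ bound --- an argument that fails for $q>1$ (see Remark~\ref{remark1}). Your alternative sketch (``bounded right inverse of the endpoint map, whence a uniform solvability estimate'') is plausible as a duality argument but is not carried out, and it is not what the paper does; as written, the necessity of condition \ref{assumption1-conject} for $L^q$ exact controllability with $q>1$ remains unproven. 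Second, a minor but real slip: you have swapped the roles of $P$ and $Q$ relative to \eqref{eq:defQ}--\eqref{eq:defP}, and the Bézout solutions live in $M(\mathbb{R}_-)$ (compact support in $\mathbb{R}_-$), not in $M_+(\mathbb{R})$ with support in $[0,d\Lambda_N]$.
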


This is motivated firstly by the fact that this conjecture actually holds true for the $L^2$ exact controllability in the case $N = d = 2$ and $m=1$, as it follows from the results of \cite{Chitour2020Approximate} (see  Section~\ref{sec:applications} for details). In addition, we provide below a result giving ground to the above conjecture. For that purpose, we introduce the following compatibility condition for the existence of regular enough solutions of \eqref{system_lin_formel2}.

\begin{definition}\label{def:compatibility}
Let $k \in \mathbb N$ and $x_0 \in C^k([-\Lambda_N, 0], \mathbb R^d)$. We say that $x_0$ is \emph{$C^k$-admissible} for System \eqref{system_lin_formel2} if, for every integer $\ell$ with $0 \leq \ell \leq k$, we have
\[
x_0^{(\ell)}(0) = \sum_{j=1}^N A_j x_0^{(\ell)}(-\Lambda_j).
\]
\end{definition}

\begin{theorem}
\label{thm:Ck}
Assume that Conditions \ref{assumption1-conject} and \ref{assumption2-conject} of Conjecture~\ref{conj:HY-exact} hold true. Then there exists a nonnegative integer $k$ such that, for every $q \in [1, +\infty)$ and $x_0, \phi \in C^k([-\Lambda_N, 0], \mathbb R^d)$ $C^k$-admissible for System \eqref{system_lin_formel2}, there exists $u \in L^q([-\Lambda_N, 0], \mathbb R^m)$ such that $x_{d \Lambda_N} = \phi$ almost everywhere in $[-\Lambda_N, 0]$.
\end{theorem}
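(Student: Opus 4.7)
The plan is to exploit Yamamoto's distributional framework by writing System~\eqref{system_lin_formel2} as $P * x = Q * u$ with $P = \delta_0 I_d - \sum_{j=1}^N A_j \delta_{\Lambda_j}$ and $Q = B \delta_0$, and to construct a Bézout-type identity $Q * R + P * S = \delta_0 I_d$ where $R$ and $S$ are matrix-valued distributions of compact support in $[-d\Lambda_N, 0]$ and of \emph{finite order}, say at most $k$. Once such $R, S$ are in hand, an explicit right-inverse to the endpoint map, restricted to the subspace of $C^k$-admissible data, can be produced in the spirit of Proposition~\ref{prop_fond_exact_controllability}: one defines $u$ as a suitable convolution involving $R$, a $C^k$ extension of $\phi$, and the free evolution generated by $x_0$. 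Since convolution with a compactly supported distribution of order $k$ sends $C^k$ functions into $L^q_{\mathrm{loc}}$, the resulting $u$ belongs to $L^q$; the admissibility of $x_0$ and $\phi$ ensures that the glueing at $t=0$ and $t=d\Lambda_N$ is $C^k$ and hence compatible with the convolutors, while the Bézout identity guarantees that $x_{d\Lambda_N} = \phi$ almost everywhere.

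In the frequency domain the Bézout identity reads $B \widehat{R}(p) + H(p) \widehat{S}(p) = I_d$ for every $p \in \mathbb{C}$. By Paley--Wiener, $R$ and $S$ will be of the required form as soon as $\widehat{R}$ and $\widehat{S}$ are entire, of exponential type at most $d\Lambda_N$, and grow at most polynomially along vertical lines. The task thus reduces to constructing a right inverse $\Gamma(p)$ of $[H(p), B]$ with those analytic and growth properties. Here the \emph{closure} in condition~\ref{assumption1-conject} is decisive: the requirement that $\mathrm{rank}[M, B] = d$ for every $M \in \overline{H(\mathbb{C})}$ is exactly the statement that the smallest singular value of $[H(p), B]$ cannot accumulate to $0$ as $p$ varies in $\mathbb{C}$. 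Combined with condition~\ref{assumption2-conject}, which controls the behaviour as $\Re(p) \to -\infty$ (where $e^{-p\Lambda_N} A_N$ dominates $H(p)$), and with the trivial full rank near $+\infty$ (where $H(p) \to I_d$), one obtains a uniform corona-type estimate $\sigma_{\min}([H(p), B]) \geq c > 0$ on $\mathbb{C}$, from which a Moore--Penrose-type construction yields $\Gamma(p)$ of the prescribed growth and, via Paley--Wiener, the desired pair $(R, S)$.

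The main obstacle is the extraction of a quantitative growth bound on $\Gamma$ from the qualitative rank condition on the closure $\overline{H(\mathbb{C})}$. The function $H$ is almost periodic along vertical lines, and the set $\overline{H(\mathbb{C})}$ can be described via the Bohr compactification of the group generated by $\Lambda_1, \dots, \Lambda_N$; this identification is what converts the non-degeneracy of $[M, B]$ for $M \in \overline{H(\mathbb{C})}$ into a uniform lower bound on $\sigma_{\min}([H(p), B])$, up to at most polynomial losses in $|\Im(p)|$. This step is precisely the reason why the conclusion is restricted to sufficiently regular data: one obtains Bézout distributions of \emph{finite} but a priori unspecified order $k$, and only $C^k$-admissible functions can absorb the loss of regularity introduced by such convolutors. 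Strengthening the argument to produce order-zero (Radon measure) Bézout solutions is exactly the content of Conjecture~\ref{conj:HY-exact} and remains open.
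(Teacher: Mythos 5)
Your high-level architecture is the same as the paper's: derive a uniform corona-type lower bound on $[H(p),B]$ from Conditions~\ref{assumption1-conject}--\ref{assumption2-conject} (this is exactly the content of Proposition~\ref{Prop2:main_result}), solve a Bézout identity $Q*R+P*S=\delta_0 I_d$ with $R,S$ compactly supported distributions of some finite order $k$, and let the $C^k$-admissible data absorb the loss of $k$ derivatives caused by convolving with $S$. However, there is a genuine gap at the decisive step. The Moore--Penrose right inverse of $[H(p),B]$ is $[H(p),B]^\ast\bigl([H(p),B][H(p),B]^\ast\bigr)^{-1}$, which involves the conjugate transpose $H(p)^\ast$ and is therefore \emph{antiholomorphic}, not holomorphic, in $p$. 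A uniform lower bound $\sigma_{\min}([H(p),B])\ge c>0$ does not by itself produce an \emph{entire} right inverse $\Gamma(p)$ to which Paley--Wiener could be applied; passing from the pointwise estimate to a holomorphic Bézout solution is precisely the corona problem, whose difficulty the paper emphasizes (and which remains open over the Radon measure algebra, cf.\ Conjecture~\ref{conjecture1}). Your sentence ``a Moore--Penrose-type construction yields $\Gamma(p)$ of the prescribed growth'' asserts the hard part without proof. The paper does not attempt this construction: it invokes Yamamoto's solvability theorem \cite[Theorem~5.1]{YAMAMOTO_Multi_Ring_2011} over the distribution algebra $\mathcal E'(\mathbb R_-)$, and the actual work in Proposition~\ref{prop-Ckexact} consists in verifying that theorem's hypotheses, namely producing, at each zero $p$ of $\det\widehat Q$, a matrix $\widehat\Phi(p)$ with $G_p\widehat P\widehat\Phi(p)=G_p$ on the left kernel of $\widehat Q(p)$ together with a bound $\vertiii{\widehat\Phi(p)}\le c$ uniform over all zeros (obtained by contradiction from the corona estimate). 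Pointwise, non-holomorphic Moore--Penrose inverses are legitimate there because only local interpolation data at the zeros are required; the global holomorphic solution is supplied by the cited theorem.

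Two further points. First, your claim that $\widehat R,\widehat S$ can be taken of exponential type at most $d\Lambda_N$ is unjustified: Yamamoto's theorem gives $R,S$ with compact support in $\mathbb R_-$ but no a priori support bound, so exact reachability is first obtained in some target-dependent time; the reduction to the uniform time $d\Lambda_N$ must go through the saturation theorem (Theorem~\ref{lem:RanE_con}), not through Paley--Wiener. Second, the nonzero initial condition $x_0$ is handled in the paper by observing that $\Upsilon_q(t)x_0$ remains $C^k$-admissible and applying the variation-of-constants formula (Proposition~\ref{prop_var_const_formula}); your ``glueing'' remark points in this direction but would need to be made precise along these lines.
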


The proof of Theorem~\ref{thm:Ck} is provided in Section~\ref{subsec:Ck}.

\subsection{Further controllability characterizations}

The approximate and exact controllability of System~\eqref{system_lin_formel2} can be described in several equivalent ways using Propositions~\ref{Prop2:main_result0} and~\ref{Prop2:main_result}, respectively.

\begin{proposition}
\label{Prop2:main_result0}
The following three statements are equivalent:
\begin{enumerate}[(a)]
\item\label{app-a} $\rank\left[H(p),B\right]=d$ for every  $p \in \mathbb{C}$.

\item\label{app-b} For every $p \in \mathbb{C}$, one has
\[
 \inf \left\{ \vertii{g^TH(p)}+\vertii{g^TB} \suchthat  g\in \mathbb{C}^d,\ \|g\|=1 \right\} >0.
\]
\item\label{app-b'} 
For every $p \in \mathbb{C}$, one has
\[
\det \left( H(p) H^\ast(p)+BB^\ast \right) >0.
\]
\end{enumerate}

\end{proposition}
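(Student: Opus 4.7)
The plan is to prove the three equivalences pointwise in $p \in \mathbb C$, since all three conditions are asserted as holding for every $p$ separately. Fix $p \in \mathbb C$ and denote the concatenated matrix $K := [H(p), B] \in \mathcal M_{d, d+m}(\mathbb C)$. The three statements then become, respectively, $\rank K = d$; the infimum over the complex unit sphere of $g \mapsto \|g^T H(p)\| + \|g^T B\|$ is positive; and $\det(K K^\ast) > 0$. All three are classical characterizations of the full row rank of the single matrix $K$, so the proposition will be obtained once these are linked together.

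First I would show \ref{app-a}$\Leftrightarrow$\ref{app-b}. The matrix $K$ has rank strictly less than $d$ precisely when its rows are linearly dependent over $\mathbb C$, equivalently, when there exists $g \in \mathbb C^d \setminus \{0\}$ with $g^T K = 0$, i.e., both $g^T H(p) = 0$ and $g^T B = 0$. By homogeneity, we may normalize $\|g\| = 1$, so negating gives: $\rank K = d$ iff $\|g^T H(p)\| + \|g^T B\| > 0$ for every unit vector $g$. Since the map $g \mapsto \|g^T H(p)\| + \|g^T B\|$ is continuous on the compact unit sphere of $\mathbb C^d$, strict positivity on the sphere is equivalent to strict positivity of the infimum, yielding \ref{app-b}.

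Next I would show \ref{app-a}$\Leftrightarrow$\ref{app-b'}. The Gram-type matrix
\[
K K^\ast = H(p) H^\ast(p) + B B^\ast \in \mathcal M_{d,d}(\mathbb C)
\]
is Hermitian positive semidefinite. A standard fact is that $\rank(K K^\ast) = \rank K$, so $\rank K = d$ iff $K K^\ast$ is invertible. Being positive semidefinite, $K K^\ast$ is invertible iff it is positive definite, iff all its eigenvalues are strictly positive, iff $\det(K K^\ast) > 0$. (Note that $\det(K K^\ast) \ge 0$ automatically as the product of nonnegative eigenvalues, so the only information in \ref{app-b'} is nonvanishing.) This gives \ref{app-a}$\Leftrightarrow$\ref{app-b'}.

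Since each of the three statements of the proposition is a pointwise condition quantified over all $p \in \mathbb C$, combining the two pointwise equivalences above yields the three-way equivalence as stated. There is no real obstacle here: the argument is purely linear-algebraic and does not interact with the specific analytic structure of $H(p)$ from \eqref{eq:defH}; the only care needed is to invoke compactness of the unit sphere for the passage from pointwise positivity to positivity of the infimum in \ref{app-b}, and positive semidefiniteness of $K K^\ast$ for the passage from nonvanishing determinant to strict positivity in \ref{app-b'}.
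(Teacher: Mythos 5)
Your proof is correct and takes essentially the same approach as the paper. The paper does not write this proposition's proof out in full---it defers to a ``simpler version'' of the sequence-based Lemma~\ref{lem:sous_suite1} used for the uniform counterpart, Proposition~\ref{Prop2:main_result}---and for a fixed $p$ that simpler version reduces exactly to your pointwise linear algebra: attainment of the infimum on the compact unit sphere for \ref{app-a}$\Leftrightarrow$\ref{app-b}, and $\rank(KK^\ast)=\rank K$ together with positive semidefiniteness of $KK^\ast$ for \ref{app-a}$\Leftrightarrow$\ref{app-b'}.
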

Proposition~\ref{Prop2:main_result} below is the counterpart of Proposition~\ref{Prop2:main_result0} for the case of exact controllability.

\begin{proposition}
\label{Prop2:main_result}
Under the assumption that $\rank\left[A_N,B\right]=d$, 
the following three statements are equivalent:
\begin{enumerate}[(a)]
\item\label{exa-a} $\rank\left[M,B\right]=d$ for every $M\in \overline{H(\mathbb{C})}$.

\item\label{exa-b} There exists $\alpha>0$ such that, for every $p\in \mathbb{C}$,  
\[
 \inf \left\{ \vertii{g^TH(p)}+\vertii{g^TB} \suchthat \mbox{$g\in \mathbb{C}^d$, $\|g^T \|=1$} \right\} \ge \alpha. 
\]
\item\label{exa-b'}
There exists $\alpha>0$ such that, for every $p \in \mathbb{C}$,
\[
\det \left( H(p) H^\ast(p)+BB^\ast  \right) \ge \alpha .
\]
\end{enumerate}

\end{proposition}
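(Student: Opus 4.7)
The plan is to prove the equivalence as the cycle (\ref{exa-a}) $\Rightarrow$ (\ref{exa-a}) … let me re-organize: I will prove the cycle (\ref{exa-b'}) $\Rightarrow$ (\ref{exa-a}) $\Rightarrow$ (\ref{exa-b}) $\Rightarrow$ (\ref{exa-b'}), with the standing hypothesis $\rank[A_N,B]=d$ entering only in the middle step. For (\ref{exa-b'}) $\Rightarrow$ (\ref{exa-a}), it suffices to observe that any $M \in \overline{H(\mathbb{C})}$ is the limit of some $H(p_n)$, so passing to the limit in the uniform inequality gives $\det(MM^*+BB^*) \ge \alpha$, hence $\rank[M,B]=d$. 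For (\ref{exa-b}) $\Rightarrow$ (\ref{exa-b'}), the identity $\|g^T[H(p),B]\|^2 = \|g^T H(p)\|^2 + \|g^T B\|^2$ and the elementary bound $\|g^T H(p)\| + \|g^T B\| \le \sqrt{2}\,\|g^T[H(p),B]\|$ show that the infimum in (\ref{exa-b}) coincides, up to a factor $\sqrt{2}$, with the smallest singular value of $K(p):=[H(p),B]$; since $\det(H(p)H^*(p)+BB^*) = \det(K(p)K(p)^*)$ is the product of the squares of the $d$ singular values of $K(p)$, condition (\ref{exa-b'}) follows with constant $(\alpha^2/2)^d$.

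The heart of the proof is the remaining implication (\ref{exa-a}) $\Rightarrow$ (\ref{exa-b}), which I would establish by contradiction. Suppose there are sequences $p_n \in \mathbb{C}$ and unit vectors $g_n \in \mathbb{C}^d$ with $\|g_n^T H(p_n)\| + \|g_n^T B\| \to 0$; after extraction, $g_n \to g$ with $\|g\|=1$. If $(H(p_n))_n$ is bounded, a further subsequence converges to some $M \in \overline{H(\mathbb{C})}$, and passing to the limit yields $g^T[M,B]=0$, contradicting (\ref{exa-a}). Otherwise $(H(p_n))_n$ is unbounded; since $|e^{-p\Lambda_j}|$ depends only on $\Re p$, this forces $\Re(p_n) \to -\infty$. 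Setting $\beta_n := -e^{p_n \Lambda_N}$, a direct computation gives
\[
\beta_n H(p_n) = -e^{p_n \Lambda_N} I_d + A_N + \sum_{j=1}^{N-1} e^{p_n(\Lambda_N-\Lambda_j)} A_j \longrightarrow A_N,
\]
since $\Lambda_N-\Lambda_j > 0$ for every $j<N$. On one hand $g_n^T \beta_n H(p_n) \to g^T A_N$; on the other hand $\|g_n^T \beta_n H(p_n)\| = |\beta_n|\,\|g_n^T H(p_n)\| \to 0$ because $|\beta_n|\to 0$. Hence $g^T A_N = 0$, and since $g^T B = 0$ by the second vanishing limit, we conclude $g^T [A_N,B]=0$ with $\|g\|=1$, contradicting the standing hypothesis.

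The main obstacle to anticipate is controlling the unbounded regime $\|H(p_n)\| \to \infty$: the correct rescaling by $\beta_n = -e^{p_n \Lambda_N}$, which isolates $A_N$ as the dominant term precisely because $\Lambda_N$ is the largest delay, is what allows the contradiction at infinity to reduce to the rank condition on $[A_N,B]$ that the proposition takes as hypothesis. Once this reduction is in place, every step is either a continuity argument or standard linear algebra.
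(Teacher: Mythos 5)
Your proof is correct and follows essentially the same route as the paper: the paper packages your key compactness argument into Lemma~\ref{lem:sous_suite1}, where the decisive step in the regime $\Re(p_n)\to-\infty$ is exactly your rescaling, since $-e^{p\Lambda_N}H(p)=-\widehat{Q}(p)\to A_N$, and the comparison between the infimum in \ref{exa-b} and the determinant in \ref{exa-b'} is the same singular-value/boundedness-on-vertical-strips argument. The only cosmetic differences are that you organize the equivalence as a cycle rather than as two separate equivalences, and that your dichotomy ``$(H(p_n))_n$ bounded or unbounded'' should be read up to extraction of subsequences.
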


The proofs of Propositions~\ref{Prop2:main_result0} and \ref{Prop2:main_result} are given in Section~\ref{subsec:useful_properties}. 

As an immediate consequence of Theorems~\ref{main_result1} and \ref{main_result2} and Propositions~\ref{Prop2:main_result0} and \ref{Prop2:main_result}, we have the following corollary. 

\begin{corollary}
Under the assumption that $\rank\left[A_N,B\right]=d$,  we have:
\begin{enumerate}[i)]
    \item 
Given any $q \in [1,+\infty)$, System~\eqref{system_lin_formel2} is $L^q$ approximately controllable in time $d\Lambda_N$ 
if and only if one of the items~\ref{app-a}--\ref{app-b'} of Proposition~\ref{Prop2:main_result0}
holds true.
\item If System~\eqref{system_lin_formel2} is $L^1$ exactly controllable in time $d\Lambda_N$ 
then all items~\ref{app-a}--\ref{app-b'} of Proposition~\ref{Prop2:main_result} hold true.
\end{enumerate}
\end{corollary}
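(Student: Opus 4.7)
The plan is to derive both items by simply concatenating the main theorems with the equivalences of the two propositions, so no genuinely new work is required; the only care needed is in tracking how the assumption $\rank[A_N,B]=d$ interacts with the statements.

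For item~i), I would fix $q \in [1,+\infty)$ and invoke Theorem~\ref{main_result1}, which says that $L^q$ approximate controllability in time $d\Lambda_N$ is equivalent to the conjunction of \ref{assumption1bis} and \ref{assumption2bis}. Since \ref{assumption2bis} is precisely the standing assumption $\rank[A_N,B]=d$, the equivalence reduces to condition \ref{assumption1bis}, i.e.\ to item~\ref{app-a} of Proposition~\ref{Prop2:main_result0}. Then I would appeal to Proposition~\ref{Prop2:main_result0} to replace \ref{app-a} by either of the equivalent items~\ref{app-b} or~\ref{app-b'}, concluding this part.

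For item~ii), I would assume $L^1$ exact controllability in time $d\Lambda_N$ and apply Theorem~\ref{main_result2} to obtain both \ref{assumption1-L1} and \ref{assumption2-L1}. Condition \ref{assumption1-L1} is item~\ref{exa-a} of Proposition~\ref{Prop2:main_result}; since the standing hypothesis $\rank[A_N,B]=d$ is exactly the premise under which Proposition~\ref{Prop2:main_result} asserts the equivalence of \ref{exa-a}, \ref{exa-b}, and \ref{exa-b'}, I would conclude that all three items hold.

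There is essentially no obstacle, since the corollary is a purely logical assembly of results already established; the only subtlety worth noting explicitly in the write-up is that the uniform lower bound $\alpha>0$ in Proposition~\ref{Prop2:main_result} requires the standing assumption $\rank[A_N,B]=d$ to be active, which is why the corollary is phrased with that hypothesis in both items rather than deducing it separately in item~ii) from Theorem~\ref{main_result2}.
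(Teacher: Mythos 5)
Your argument is correct and matches the paper's intent exactly: the corollary is stated there as an immediate consequence of Theorems~\ref{main_result1} and \ref{main_result2} together with Propositions~\ref{Prop2:main_result0} and \ref{Prop2:main_result}, which is precisely the concatenation you carry out. Your observation that the standing hypothesis $\rank[A_N,B]=d$ coincides with condition~\ref{assumption2bis} and is also the premise needed for the equivalences in Proposition~\ref{Prop2:main_result} is the only point requiring care, and you handle it correctly.
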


Thanks to Item~\ref{exa-b'} in  Propositions \ref{Prop2:main_result0} and \ref{Prop2:main_result}, Item~\ref{assumption1bis} of Theorems~\ref{main_result1} and \ref{main_result2} can be reformulated in an equivalent way in terms of 
the non-vanishing properties of the function $p\mapsto \det \left( H(p) H^\ast(p)+BB^\ast  \right)$. Our next result shows that this function can only vanish in a bounded vertical strip of the complex plane.

\begin{proposition}
\label{prop_zero_Q0}
There exist $\beta_1,\beta_2,\rho\in\mathbb{R}$ such that
\begin{equation}
\label{eq:ineg-H-H*}
\det \left(H(p) H^\ast(p) + B B^\ast\right)\ge \abs{\det \left(H(p)\right)}^2 \ge \rho > 0
\end{equation}
for all $p \in \mathbb C$ such that $\Re(p) < \beta_1$ or $\Re(p) > \beta_2$.
\end{proposition}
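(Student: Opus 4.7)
For the first inequality I would observe that $H(p)H^\ast(p)+BB^\ast=[H(p),B]\,[H(p),B]^\ast$ and invoke the Cauchy--Binet formula. Letting $M_S$ denote the $d\times d$ submatrix obtained by keeping the columns of $M\in\mathcal{M}_{d,d+m}(\mathbb{C})$ indexed by a subset $S\subset\{1,\dots,d+m\}$ of cardinality $d$, Cauchy--Binet yields
\[
\det\bigl(H(p)H^\ast(p)+BB^\ast\bigr) \;=\; \sum_{S}\bigl\lvert\det\bigl([H(p),B]_S\bigr)\bigr\rvert^2,
\]
in which every summand is nonnegative and the one corresponding to $S=\{1,\dots,d\}$ equals $\abs{\det H(p)}^2$. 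This yields the first inequality and reduces the remaining task to an asymptotic lower bound on $\abs{\det H(p)}$ as $\Re(p)\to\pm\infty$.

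When $\Re(p)\to+\infty$, each factor satisfies $\abs{e^{-p\Lambda_j}}=e^{-\Re(p)\Lambda_j}\to 0$ uniformly in $\Im(p)$ since $\Lambda_j>0$, so $H(p)\to I_d$ uniformly in $\Im(p)$ and hence $\det H(p)\to 1$. I can thus choose $\beta_2\in\mathbb{R}$ such that $\abs{\det H(p)}\ge 1/2$ for $\Re(p)>\beta_2$.

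The delicate case is $\Re(p)\to-\infty$. Expanding via the Leibniz formula gives
\[
\det H(p) \;=\; \sum_{\mathbf n\in\mathbb{N}^N,\ |\mathbf n|\le d} c_{\mathbf n}\, e^{-p\,\Lambda\cdot\mathbf n},
\]
with real coefficients $c_{\mathbf n}$ and $c_{\mathbf 0}=1$. I would then group the terms by the value $s:=\Lambda\cdot\mathbf n$, obtaining $\det H(p)=\sum_{s\in S}d_s\, e^{-ps}$, where $S\subset\mathbb{R}_+$ is a finite set, $0\in S$, and $d_0=1$. If $S=\{0\}$ then $\det H(p)\equiv 1$ and the bound is trivial; otherwise, setting $\sigma:=\max S>0$, I would factor
\[
\det H(p)\;=\;e^{-p\sigma}\Bigl(d_\sigma + \sum_{s\in S,\ s<\sigma} d_s\, e^{p(\sigma-s)}\Bigr)
\]
and note that, since $\sigma-s>0$ for $s<\sigma$, the bracketed sum tends to $d_\sigma\ne 0$ uniformly in $\Im(p)$ as $\Re(p)\to-\infty$, whereas $\abs{e^{-p\sigma}}=e^{-\Re(p)\sigma}\to+\infty$. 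Consequently $\abs{\det H(p)}\to+\infty$, so some $\beta_1\in\mathbb{R}$ guarantees $\abs{\det H(p)}\ge 1$ for $\Re(p)<\beta_1$; taking $\rho=1/4$ then closes the argument.

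The main subtlety is that several multi-indices $\mathbf n$ may share the same exponent $\Lambda\cdot\mathbf n$ in the Leibniz expansion, so individual coefficients could a priori cancel in the sum and destroy the naive candidate leading term. The grouping step is precisely what handles this: once terms are aggregated by exponent value, the maximal exponent $\sigma$ carries, by construction, a nonzero coefficient $d_\sigma$, which genuinely dominates the asymptotic behaviour of $\det H(p)$ as $\Re(p)\to-\infty$ and makes the remaining estimates routine.
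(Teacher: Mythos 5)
Your proof is correct and follows essentially the same route as the paper: $H(p)\to I_d$ as $\Re(p)\to+\infty$, and a dominant exponential term with nonzero coefficient forces $\abs{\det H(p)}$ to stay bounded below (indeed blow up) as $\Re(p)\to-\infty$. You merely make explicit two points the paper leaves implicit, namely the Cauchy--Binet justification of the first inequality and the grouping of Leibniz terms by the value of $\Lambda\cdot\mathbf n$ to guarantee a genuinely nonzero leading coefficient.
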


\begin{proof}
The left inequality in \eqref{eq:ineg-H-H*} is trivial.
As for the second one, notice first that one can assume, with no loss of generality, that $H(\cdot)\not\equiv I_d$. Moreover, for $\Re(p)$ large enough,
$H(p)$ is arbitrarily close to the identity. This implies that there exists $\beta_2>0$ such that $\abs{\det \left(H(p)\right)} \geq 1/2$ for $\Re(p)> \beta_2$.

On the other hand, the holomorphic function $h$ defined by $h(p) = \det \left(H(p)\right)$
can be written as
\begin{equation*}
h(p)=\tilde{h}(p)+h_n e^{-p(\Lambda \cdot n)},\qquad p\in \mathbb{C},
\end{equation*}
where 
$h_n$ is a nonzero real number,
$n$ is in $\mathbb{N}^N$ and satisfies $0 \le |n|\le N$,
and
$\tilde{h}(\cdot)$ is a holomorphic function such that $\displaystyle\lim_{\Re(p) \to -\infty} \tilde{h}(p)e^{p(\Lambda \cdot n)}=0$. 
We then deduce that 
$h_n e^{-p(\Lambda \cdot n)}$ is the dominant term of $h(p)$ as $\Re(p) \to -\infty$,
and it goes to $\infty$ in modulus. 
This implies that for every $\rho>0$ there exists $\beta_1\in \mathbb{R}$ such that $|h(p)|^2 \ge \rho$ for $\Re(p)<\beta_1$.
\end{proof}

\section{Properties of the endpoint map and consequences for controllability}
\label{sec:6}

The goal of this section is threefold. Firstly, we give a variation-of-constants formula for System~\eqref{system_lin_formel2} allowing us to write the solution operator as the sum of a flow operator and an endpoint map. As a consequence, we can give characterizations of the controllability of System~\eqref{system_lin_formel2} in terms of the range of the endpoint map.
Secondly, through a Cayley--Hamilton theorem for multivariate polynomials, we show that the range of the endpoint map is constant from the time $d\Lambda_N$ and finally we deduce that the approximate (exact respectively) controllability of System~\eqref{system_lin_formel2} is equivalent to the approximate controllability (exact respectively) in finite time $d\Lambda_N$, thus providing a proof for Theorem~\ref{lem:3}.

\subsection{Definitions and preliminary remarks}
\label{sec:4}

We recall in Proposition~\ref{prop_var_const_formula} the explicit representation formula for solutions
of System~\eqref{system_lin_formel2}, often called variation-of-constants formula (and sometimes flow formula), which can be found in \cite{Chitour2020Approximate}. Before the statement of the proposition, let us give the following definitions.

\begin{definition}
\label{def_Xi_n}
The family of matrices $\Xi_n \in \mathcal{M}_{d,d}(\mathbb{R})$, $n \in \mathbb{Z}^N$, is defined inductively as
\begin{equation*}
 \Xi_n= \begin{cases}
     0 & \mbox{if $n \in \mathbb{Z}^N \backslash \mathbb{N}^N$},\\
      I_d & \mbox{if  $n=0$},\\
 \sum_{k=1}^N A_k \Xi_{n-e_k} & \text{if $n \in \mathbb{N}^N$ and $|n|>0$},
    \end{cases}   
\end{equation*}
where $e_k$ denotes the $k$-th canonical vector of $\mathbb Z^N$ for $k \in \llbracket 1,N\rrbracket$, i.e., all the coordinates of $e_k$ are zero except for the $k$-th one, which is equal to one.
\end{definition}
\begin{definition}
\label{def_Upsilon_E}
 For $T \in [0,+\infty)$, we introduce the following two  operators:

\begin{enumerate}[1)]
\item The \emph{flow operator $\Upsilon_{q}(T)\colon L^q([-\Lambda_N,0],\mathbb{R}^d) \to L^q([-\Lambda_N,0],\mathbb{R}^d)$ of System~\eqref{system_lin_formel2}} defined by
\[
\left(\Upsilon_{q}(T)x_0 \right)(s)=\sum_{\substack{(n,j) \in \mathbb{N}^N \times \llbracket1,N\rrbracket \\ -\Lambda_j\le  T+s-\Lambda \cdot n <0}}  \Xi_{n-e_j} A_j x_0(T+s-\Lambda \cdot n),
\]
for $x_0 \in L^q([-\Lambda_N,0],\mathbb{R}^d)$ and $s \in [-\Lambda_N,0]$. The operator $\Upsilon_{q}(T)$ represents the solution operator of System~\eqref{system_lin_formel2} with $B = 0$ and initial state $x_0$.
\item The \emph{endpoint operator $E_{q}(T)\colon L^q([0,T],\mathbb{R}^m) \to L^q([-\Lambda_N,0],\mathbb{R}^d)$ of System~\eqref{system_lin_formel2}} defined by 
\[
\left(E_{q}(T)u \right)(t)= \sum_{\substack{n \in \mathbb{N}^N \\ \Lambda \cdot n \le T+t}}  \Xi_n B u(T+t-\Lambda \cdot n),
\]
for $u \in L^q([0,T],\mathbb{R}^m)$ and $t\in [-\Lambda_N,0]$.
\end{enumerate}
\end{definition} 

\begin{proposition}[Variation-of-constants formula]
\label{prop_var_const_formula}
For $T \in [0,+\infty)$, $u \in L^q([0,T],\mathbb{R}^m)$, $x_0 \in L^q([-\Lambda_N,0],\mathbb{R}^d)$, and $t \in [0,T]$, we have
\begin{equation}
\label{eq:variation_constant}
x_t=\Upsilon_{q}(t)x_0+E_{q}(t)u.
\end{equation}
\end{proposition}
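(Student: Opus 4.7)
My plan is to prove the variation-of-constants formula using the uniqueness statement of Proposition~\ref{Prop1:main_result}. Introduce the candidate function
\[
z(\tau) := \bigl(\Upsilon_{q}(t) x_0\bigr)(\tau-t) + \bigl(E_{q}(t) u\bigr)(\tau-t), \qquad \tau \in [t-\Lambda_N, t],
\]
and show that (i) $z$ coincides with $x_0$ on the portion of $[-\Lambda_N, 0)$ within its domain and (ii) $z$ satisfies Eq.~\eqref{system_lin_formel2} pointwise on $[0,t]$. Uniqueness then yields $z = x$ on $[-\Lambda_N, t]$, which is precisely the claim, since $x_t(s) = x(t+s)$.

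A preliminary observation is that the sums defining $\Upsilon_{q}(t)x_0$ and $E_{q}(t)u$ are finite: the constraints $\Lambda\cdot n\le t+s$ or $-\Lambda_j \le t+s-\Lambda\cdot n<0$ each imply $\Lambda\cdot n \le T+\Lambda_N$, which bounds $|n|$ since $\Lambda_1>0$. Point (i) reduces to a direct inspection of the index sets when $t+s<0$, using that $\Xi_{n-e_j}=0$ whenever $n_j=0$.

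The core content is (ii). For $\tau\in[0,t]$, I would split the right-hand side $\sum_{j=1}^N A_j\, z(\tau-\Lambda_j)+Bu(\tau)$ according to whether $\tau-\Lambda_j\ge 0$ (expand $z(\tau-\Lambda_j)$ via the explicit formulas) or $\tau-\Lambda_j \in [-\Lambda_j, 0)$ (use $z(\tau-\Lambda_j)=x_0(\tau-\Lambda_j)$). After the reindexing $n' = n + e_j$, the $u$-contributions take the form
\[
\sum_{\substack{n' \in \mathbb{N}^N,\ |n'|\ge 1\\ \Lambda\cdot n'\le \tau}} \Bigl(\sum_{j:\, n'_j\ge 1} A_j\, \Xi_{n'-e_j}\Bigr)\, B\, u(\tau-\Lambda\cdot n'),
\]
and the defining recurrence $\Xi_{n'}=\sum_{k=1}^{N} A_k\, \Xi_{n'-e_k}$ collapses the inner bracket to $\Xi_{n'}$; adding the explicit $Bu(\tau)$ term (the $n'=0$ contribution) reproduces the full $u$-part of $z(\tau)$. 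An analogous bookkeeping handles the $x_0$-contributions: the boundary terms $A_j x_0(\tau-\Lambda_j)$ match exactly the $|n'|=1$ components with $(n',k)=(e_j,j)$, while the reindexed internal terms produce the $|n'|\ge 2$ components through the same recurrence. One also verifies that the splitting constraint $\Lambda_j\le \tau$ is automatic whenever $n'_j\ge 1$ and the pair $(n', k)$ satisfies $-\Lambda_k \le \tau-\Lambda\cdot n'<0$, because together these conditions force $\Lambda_j+\Lambda_k \le \Lambda\cdot n' \le \tau+\Lambda_k$.

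The principal obstacle is the combinatorial alignment in (ii), in particular ensuring that the boundary terms from the splitting match exactly the low-index components of $z(\tau)$ without over- or under-counting. A conceptually cleaner alternative is to argue by induction on the iteration depth $\lfloor \tau/\Lambda_1\rfloor$, unfolding the equation step by step until all arguments fall in $[-\Lambda_N,0)$ and recognizing the resulting finite expansion as $z(\tau)$; the definition of $\Xi_n$ then emerges naturally from counting sequences $(\sigma_1,\dotsc,\sigma_{|n|})\in\{1,\dotsc,N\}^{|n|}$ with prescribed multiplicities $n_1,\dotsc,n_N$.
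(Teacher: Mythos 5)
The paper offers no proof of this proposition: it is quoted from \cite{Chitour2020Approximate} (with the remark that passing from $q=2$ to general $q$ is immediate), so there is no in-paper argument to compare against. Your strategy --- build the candidate from the explicit formulas and invoke the uniqueness of Proposition~\ref{Prop1:main_result} --- is the natural one, and your treatment of the core step (ii) is correct: the reindexing $n'=n+e_j$, the collapse of $\sum_{j}A_j\Xi_{n'-e_j}$ to $\Xi_{n'}$ via the defining recurrence (the terms with $n'_j=0$ vanishing automatically), the identification of the boundary terms $A_jx_0(\tau-\Lambda_j)$ with the $\lvert n'\rvert=1$ contributions, and the observation that the splitting constraint $\Lambda_j\le\tau$ is forced by the remaining index constraints are exactly the points that need checking, and they all go through. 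The finiteness of the sums and the alternative induction on $\lfloor\tau/\Lambda_1\rfloor$ (essentially the route of the cited reference) are also fine.

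The step that does not hold as you state it is (i). For $\tau=t+s<0$ a direct inspection of the index sets shows that \emph{both} sums defining $z(\tau)$ are empty, not that they reproduce $x_0(\tau)$: a nonzero term of the flow part requires $\Xi_{n-e_j}\neq 0$, hence $n_j\ge 1$ and $\Lambda\cdot n\ge\Lambda_j$, which combined with $\tau-\Lambda\cdot n\ge-\Lambda_j$ forces $\tau\ge 0$; the only index that would produce $x_0(\tau)$ itself is $n=0$, but then $\Xi_{-e_j}=0$. So $z\equiv 0$ on $[t-\Lambda_N,0)$, and the identity \eqref{eq:variation_constant}, read literally, fails there for $0\le t<\Lambda_N$ unless $x_0$ vanishes on $[-\Lambda_N,-t)$. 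This is as much a defect of the statement as of your verification (the right-hand side carries no copy of the not-yet-forgotten initial segment), but your claimed ``direct inspection'' proves the opposite of what you assert. The correct conclusion of your argument is that \eqref{eq:variation_constant} holds almost everywhere on $[-\min(t,\Lambda_N),0]$, hence on all of $[-\Lambda_N,0]$ once $t\ge\Lambda_N$ --- which covers every use at time $d\Lambda_N$; either restrict to $t\ge\Lambda_N$ or add the missing term $x_0(t+s)$ for $t+s<0$ to the decomposition. A minor presentational point: since the right-hand side depends on $(t,s)$ only through $\tau=t+s$, it is cleaner to define the candidate once on $[-\Lambda_N,T]$ (by the explicit formula for $\tau\ge 0$ and by $x_0$ for $\tau<0$) rather than on the moving window $[t-\Lambda_N,t]$; this also avoids evaluating $z$ at points $\tau-\Lambda_j<t-\Lambda_N$ that lie outside its declared domain.
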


\begin{remark}
Notice that Proposition~\ref{prop_var_const_formula} has been stated for $q=2$ in \cite{Chitour2020Approximate} only but it holds true as well for $q \in [1,+\infty)$.
\end{remark}

In Proposition~\ref{prop:first_charact_control} below we use the variation-of-constants formula to express approximate and exact controllability in terms of the image of the operator $E_{q}(T)$, $T>0$. 
\begin{proposition}
\label{prop:first_charact_control}
Let $q \in [1, +\infty)$ and $T > 0$.
\begin{enumerate}[i)]
\item System~\eqref{system_lin_formel2} is $L^q$ approximately controllable in time $T>0$ if and only if $\Ran E_{q}(T)$ is dense in $L^q([-\Lambda_N, 0], \mathbb R^d)$.
\item System~\eqref{system_lin_formel2} is $L^q$ approximately controllable from the origin if and only if
\begin{equation}
\label{eq_app_con1}
\overline{  \bigcup_{T \ge 0} \Ran E_{q}(T)}=L^q([-\Lambda_N,0],\mathbb{R}^d).
\end{equation}
\item System~\eqref{system_lin_formel2} is $L^q$ exactly controllable in time $T>0$ if and only if $\Ran E_{q}(T) = L^q([-\Lambda_N, 0], \mathbb R^d)$.
\item System~\eqref{system_lin_formel2} is $L^q$ exactly controllable from the origin if and only if
\begin{equation}
\label{eq_app_con2}
\bigcup_{T \ge 0} \Ran E_{q}(T)=L^q([-\Lambda_N,0],\mathbb{R}^d).
\end{equation}
\end{enumerate}
\end{proposition}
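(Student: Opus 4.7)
The proof is essentially a direct translation of Definitions~\ref{def:Lq-cont} and \ref{def:contr-from-origin} through the variation-of-constants formula~\eqref{eq:variation_constant}, so I will organize it around the algebraic identity
\[
x_T - \phi = E_{q}(T) u - \bigl(\phi - \Upsilon_{q}(T) x_0\bigr),
\]
and treat the four items in parallel.

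For item (i), the plan is to show both implications via the substitution $\psi := \phi - \Upsilon_{q}(T) x_0$. In one direction, assuming $\Ran E_{q}(T)$ is dense, given any $x_0, \phi \in L^q([-\Lambda_N,0],\mathbb{R}^d)$ and $\epsilon>0$, the target $\psi$ is a well-defined element of $L^q([-\Lambda_N,0],\mathbb{R}^d)$, so by density there exists $u\in L^q([0,T],\mathbb{R}^m)$ with $\|E_{q}(T) u - \psi\|_{[-\Lambda_N,0],q} < \epsilon$, which by the displayed identity is exactly $\|x_T - \phi\|_{[-\Lambda_N,0],q} < \epsilon$. Conversely, approximate controllability in time $T$ applied with $x_0 \equiv 0$ and $\phi$ any given element of $L^q([-\Lambda_N,0],\mathbb{R}^d)$ immediately produces a sequence in $\Ran E_{q}(T)$ converging to $\phi$. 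The key observation here is that choosing $x_0 \equiv 0$ in the forward direction reduces the problem to density of $\Ran E_{q}(T)$, since $\Upsilon_{q}(T)\cdot 0 = 0$. Item (iii) is handled in exactly the same way, replacing $\|\cdot\|<\epsilon$ by equality throughout.

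For items (ii) and (iv), I use that the definition of controllability from the origin fixes $x_0 \equiv 0$, so that \eqref{eq:variation_constant} reduces to $x_{T_{\epsilon,\phi}} = E_{q}(T_{\epsilon,\phi}) u$ (respectively $x_{T_\phi} = E_{q}(T_\phi) u$). Then approximate controllability from the origin is equivalent to the statement that every $\phi \in L^q([-\Lambda_N,0],\mathbb{R}^d)$ can be approximated, in the $L^q$-norm, by some element of $\Ran E_{q}(T)$ for some $T \ge 0$; this is precisely \eqref{eq_app_con1}. Similarly, exact controllability from the origin translates directly to \eqref{eq_app_con2}.

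There is no real obstacle in this proof: the statements are essentially tautological once \eqref{eq:variation_constant} is available, and the only subtlety is the observation that the map $(x_0,\phi) \mapsto \phi - \Upsilon_{q}(T) x_0$ is surjective onto $L^q([-\Lambda_N,0],\mathbb{R}^d)$ (trivially, by setting $x_0\equiv 0$), which is what allows both directions of (i) and (iii) to go through cleanly.
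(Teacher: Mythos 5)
Your proof is correct and follows exactly the route the paper intends: the paper states Proposition~\ref{prop:first_charact_control} without proof as an immediate consequence of the variation-of-constants formula \eqref{eq:variation_constant}, and your argument (the substitution $\psi = \phi - \Upsilon_q(T)x_0$ for the fixed-time items, and the specialization $x_0 \equiv 0$ for the from-the-origin items) is precisely the omitted verification. The only nitpick is a wording slip: the choice $x_0 \equiv 0$ is what makes the \emph{converse} implications (controllability implies density/surjectivity of the range) work, not the forward ones, but the mathematics is right.
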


\subsection{Saturation of the range of the endpoint map in time \texorpdfstring{$d\Lambda_N$}{d Lambda N}}

Cayley--Hamilton theorem is instrumental to study controllability properties of System~\eqref{system_lin_formel2} containing one delay (see \cite[Remark 3.5]{Chitour2020Approximate}). It turns out that the following generalization of Cayley--Hamilton theorem in the case of multivariate polynomials plays a similar role in our subsequent arguments.
\begin{lemma}
\label{lem:1}
Let $\Xi_n$, $n\in \mathbb{Z}^N$, be the matrices introduced in Definition~\ref{def_Xi_n}. 
There exist real coefficients $\alpha_k$, for 
$k\in \mathbb{N}^N$ with $0<|k| \le d$, such that, for every $n\in \mathbb{N}^N$ with $|n|\ge d$,
\begin{equation}
\label{eq:lem1:1}
\Xi_n=-\sum_{\substack{k \in \mathbb{N}^N \\ 0<|k| \le d}}\alpha_k \Xi_{n-k}.
\end{equation}
\end{lemma}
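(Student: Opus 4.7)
The plan is to organize $\Xi_n$ into a generating series and read off the identity from the classical adjugate formula, reducing the statement to the observation that $\det(I_d - \sum_j A_j z_j)$ is a polynomial of total degree at most $d$.

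Concretely, I would first work in the ring $\mathcal{M}_{d,d}(\mathbb{R})\llbracket z_1,\dots,z_N\rrbracket$ of formal power series in $N$ commuting indeterminates with matrix coefficients. Setting $A(z) := \sum_{j=1}^N A_j z_j$, the matrix $I_d - A(z)$ has invertible constant term $I_d$, so it is invertible in this ring. The recursion of Definition~\ref{def_Xi_n} matches exactly the identity obtained by expanding $(I_d - A(z))\sum_{n \in \mathbb{N}^N} \Xi_n z^n = I_d$ and equating coefficients of $z^n$: for $n=0$ one gets $\Xi_0 = I_d$ and for $|n|>0$ one gets $\Xi_n = \sum_k A_k \Xi_{n-e_k}$, with the convention that $\Xi_m = 0$ for $m\notin \mathbb{N}^N$. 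Hence
\[
(I_d - A(z))^{-1} = \sum_{n \in \mathbb{N}^N} \Xi_n z^n
\]
as formal power series.

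Next I would apply Cramer's rule to rewrite this inverse as $\text{adj}(I_d - A(z))/\det(I_d - A(z))$. Every entry of $I_d - A(z)$ is affine in $(z_1,\dots,z_N)$, so $P(z) := \det(I_d - A(z))$ is a polynomial of total degree at most $d$ with constant term $1$, and each entry of $Q(z) := \text{adj}(I_d - A(z))$ is a polynomial of total degree at most $d-1$. Writing $P(z) = \sum_{|k|\le d} \alpha_k z^k$ with $\alpha_0 = 1$, we obtain the matrix polynomial identity
\[
\Bigl(\sum_{|k| \le d}\alpha_k z^k\Bigr)\Bigl(\sum_{n \in \mathbb{N}^N}\Xi_n z^n\Bigr) = Q(z).
\]

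Finally, I would extract coefficients. For any $n$ with $|n|\ge d$, the coefficient of $z^n$ in $Q(z)$ vanishes because $\deg Q \le d-1$. The coefficient of $z^n$ on the left-hand side equals $\sum_{|k|\le d} \alpha_k \Xi_{n-k}$ (using that $\Xi_{n-k}=0$ when $n-k\notin \mathbb{N}^N$ to drop the constraint $k\le n$). Isolating the $\alpha_0\Xi_n = \Xi_n$ term yields
\[
\Xi_n = -\sum_{\substack{k \in \mathbb{N}^N\\ 0 < |k|\le d}} \alpha_k \Xi_{n-k},
\]
which is \eqref{eq:lem1:1}. The coefficients $\alpha_k$ depend only on $A_1,\dots,A_N$ and not on $n$, as required.

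The only real subtleties are verifying that the formal power series computation is legitimate (no convergence is needed; one works in $\mathcal{M}_{d,d}(\mathbb{R})\llbracket z\rrbracket$) and noting the sharp degree bound $\deg P \le d$, which is the multivariate Cayley--Hamilton input. Both are essentially bookkeeping, so I do not expect a serious obstacle here; the main conceptual step is simply recognising that the generating-function identity $(I_d - A(z))^{-1}=\sum_n \Xi_n z^n$ reduces the problem to a statement about the degrees of the numerator and denominator of a rational matrix.
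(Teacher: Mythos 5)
Your proposal is correct and follows essentially the same route as the paper's proof: identify $\sum_{n}\Xi_n z^n$ with $(I_d-A(z))^{-1}$, multiply by $\det(I_d-A(z))$ to get the adjugate, and compare degrees. The only (immaterial) difference is that you justify the generating-function identity formally in $\mathcal{M}_{d,d}(\mathbb{R})\llbracket z_1,\dots,z_N\rrbracket$, whereas the paper uses a convergent Neumann series for small $t$.
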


\begin{proof}
For $t=(t_1,\dots,t_N) \in \mathbb{R}^N$, set
$$
A(t)=t_1A_1+t_2A_2+ \cdots+t_N A_N.
$$
One deduces, by Definition~\ref{def_Xi_n} and an immediate  induction argument that, for 
every $j\in \mathbb{N}$ and $t \in \mathbb{R}^N$, it holds
\begin{equation}
\label{eq:lem1:1.1}
A(t)^j=\sum_{\substack{n \in \mathbb{N}^N \\ |n|=j}} \Xi_n t^n,
\end{equation}
where $t^n:=t_1^{n_1}  t_2^{n_2} \dotsm t_N^{n_N}$. 
Using Neumann series, we deduce from Equation~\eqref{eq:lem1:1.1} 
that, for $t$  small enough,
\begin{align}
\big(I_d-A(t)\big)^{-1}&=\sum_{j\in \mathbb{N}} A(t)^j
=\sum_{n\in \mathbb{N}^N} \Xi_{n}t^n.\label{eq:lem1:2}
\end{align}
Notice that $P(t)=\det\big(I_d-A(t)\big)$ is a multivariate polynomial of degree $d$, that is,
\begin{equation}
\label{eq:lem1:3}
P(t)=\sum_{\substack{k \in \mathbb{N}^N \\ 0 \le |k| \le d}} \alpha_k t^k,
\end{equation}
for some real numbers 
$\alpha_k$ defined for $k \in \mathbb{N}^N$ such that $|k| \le d$  and $\alpha_0=1$. Let $\operatorname{Adj}\big(I_d-A(t)\big)$ be the adjugate matrix of $I_d-A(t)$.
We have, on the one hand, that there exist $M_k \in \mathcal{M}_{d, d}(\mathbb{R})$ for $k \in \mathbb{N}^N$ and $0 \le|k|\le d-1$ such that
\begin{equation}
\label{eq:lem1:4}
\operatorname{Adj}\big(I_d-A(t)\big)=\sum_{\substack{k \in \mathbb{N}^N \\ 0 \le |k| \le d-1}}  M_{k}t^k
\end{equation}
and, on the other hand, that Equations~\eqref{eq:lem1:2}--\eqref{eq:lem1:3} lead to
\begin{align}
\operatorname{Adj}\big(I_d-A(t)\big)&= P(t)\big(I_d-A(t)\big)^{-1}
=\sum_{\substack{k \in \mathbb{N}^N \\ 0 \le |k| \le d}} \alpha_{k} t^k\sum_{n \in \mathbb{N}^N} \Xi_{n}t^n
= \sum_{n \in \mathbb{N}^N} \sum_{\substack{k \in \mathbb{N}^N \\ 0 \le |k| \le d}} \alpha_k \Xi_{n} t^{n+k}.
\label{eq:lem1:5}
\end{align}
The substitution $l=n+k$ in \eqref{eq:lem1:5} allows us to write
\begin{equation}
\label{eq:lem1:6}
\operatorname{Adj}\big(I_d-A(t)\big)= \sum_{n \in \mathbb{N}^N} \sum_{\substack{l \in \mathbb{N}^N,\,l-n \in \mathbb{N}^N \\ 0 \le \abs{l-n} \le d}} \alpha_{l-n} \Xi_{n} t^{l}.
\end{equation}
We deduce from Equations~\eqref{eq:lem1:4}--\eqref{eq:lem1:6} that, for $l \in \mathbb{N}^N$ and $|l| \ge d$,
\begin{equation}
\label{eq:lem1:7}
\sum_{\substack{n \in \mathbb{N}^N,\,l-n \in \mathbb{N}^N \\ 0 \le |l-n| \le d}} \alpha_{l-n} \Xi_{n} =0.
\end{equation}
Setting $n'=l-n$ in \eqref{eq:lem1:7}, one obtains
\begin{equation}
\label{eq:lem1:8}
\sum_{\substack{n' \in \mathbb{N}^N,\,l-n' \in \mathbb{N}^N \\ 0 \le |n'| \le d}} \alpha_{n'} \Xi_{l-n'} =0.
\end{equation}
Since $\Xi_{l-n'}=0$ for $l-n' \in \mathbb{Z}^N \backslash \mathbb{N}^N$ (see Definition~\ref{def_Xi_n}), we deduce from Equation~\eqref{eq:lem1:8} that
\begin{equation}
\label{eq:lem1:9}
\Xi_{l}=-\sum_{\substack{n' \in \mathbb{N}^N \\ 0 < |n'| \le d}} \alpha_{n'} \Xi_{l-n'},
\end{equation}
hence the conclusion.
\end{proof}

We prove now that the range of the operator $E_{q}(T)$ is constant with respect to $T$ for $T\ge d\Lambda_N$.

\begin{theorem}
\label{lem:RanE_con}
For all $T\in [d \Lambda_N,+\infty)$ and  $q \in [1,+\infty)$, we have
\begin{equation}
\label{eq:RanE_con1}
\Ran E_{q}(T)=\Ran E_{q}(d\Lambda_N).
\end{equation}
\end{theorem}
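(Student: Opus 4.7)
I will prove both inclusions. The inclusion $\Ran E_q(d\Lambda_N) \subseteq \Ran E_q(T)$ for $T \geq d\Lambda_N$ follows by causality: given $u \in L^q([0, d\Lambda_N], \mathbb{R}^m)$, the padded control $\tilde u \in L^q([0, T], \mathbb{R}^m)$ defined by $\tilde u \equiv 0$ on $[0, T - d\Lambda_N]$ and $\tilde u(s) := u(s - T + d\Lambda_N)$ on $[T - d\Lambda_N, T]$ satisfies $E_q(T) \tilde u = E_q(d\Lambda_N) u$, as verified directly from Definition~\ref{def_Upsilon_E}.

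For the converse inclusion $\Ran E_q(T) \subseteq \Ran E_q(d\Lambda_N)$ with $T \geq d\Lambda_N$, I argue by induction on the integer $n(T) := \lceil \max(0, T - d\Lambda_N)/\Lambda_1 \rceil$. The base case $n(T) = 0$, i.e.,\ $T \leq d\Lambda_N$, is covered by the first inclusion. For the inductive step, given $u \in L^q([0, T], \mathbb{R}^m)$, I split
\[
(E_q(T) u)(t) = \underbrace{\sum_{\Lambda \cdot n \leq d\Lambda_N + t} \Xi_n B u(T + t - \Lambda \cdot n)}_{I_1(t)} + \underbrace{\sum_{d\Lambda_N + t < \Lambda \cdot n \leq T + t} \Xi_n B u(T + t - \Lambda \cdot n)}_{I_2(t)}.
\]
The piece $I_1$ equals $(E_q(d\Lambda_N) v_0)(t)$ with $v_0(s) := u(s + T - d\Lambda_N)$, hence belongs to $\Ran E_q(d\Lambda_N)$. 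In $I_2$, every index $n$ satisfies $\Lambda \cdot n > (d-1)\Lambda_N$, which forces $|n| \geq d$ (since $\Lambda \cdot n \leq |n|\Lambda_N$), so Lemma~\ref{lem:1} applies.

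Using the coefficients $\alpha_k$ from Lemma~\ref{lem:1}, I define the correction
\[
v_1(s) := -\sum_{0 < |k| \leq d} \alpha_k \, u(s + T - d\Lambda_N - \Lambda \cdot k) \, \mathbf{1}_{[\max(0,\, \Lambda \cdot k - (T - d\Lambda_N)),\, \Lambda \cdot k)}(s),
\]
with $u$ extended by zero outside $[0, T]$, which lies in $L^q([0, d\Lambda_N], \mathbb{R}^m)$. A careful calculation of $(E_q(d\Lambda_N) v_1)(t)$---swapping summations, substituting $n = m + k$, and invoking the identity $\sum_{0 < |k| \leq d} \alpha_k \Xi_{n-k} = -\Xi_n$ valid for $|n| \geq d$---yields $(E_q(d\Lambda_N) v_1)(t) = I_2(t) - R(t)$, where $R$ collects residual contributions only from multi-indices $k$ with $\Lambda \cdot k \leq T - d\Lambda_N$. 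Introducing the right-shifts $u_k(s) := u(s - \Lambda \cdot k)\, \mathbf{1}_{[\Lambda \cdot k, T]}(s)$, causality yields $E_q(T) u_k = E_q(T - \Lambda \cdot k)(u|_{[0, T - \Lambda \cdot k]})$, using which one decomposes $R$ as an element of $\Ran E_q(d\Lambda_N)$ plus a finite linear combination of terms $E_q(T - \Lambda \cdot k)(u|_{[0, T - \Lambda \cdot k]})$. Since $\Lambda \cdot k \geq \Lambda_1$ implies $n(T - \Lambda \cdot k) \leq n(T) - 1$, the inductive hypothesis places each such term in $\Ran E_q(d\Lambda_N)$, so $R \in \Ran E_q(d\Lambda_N)$ and thus $E_q(T) u = E_q(d\Lambda_N)(v_0 + v_1) + R \in \Ran E_q(d\Lambda_N)$.

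The main technical obstacle is the explicit verification of the identity $(E_q(d\Lambda_N) v_1)(t) = I_2(t) - R(t)$, requiring a careful case analysis: for multi-indices $k$ with $\Lambda \cdot k > T - d\Lambda_N$, the support of the indicator aligns exactly with the constraint defining the corresponding piece of $I_2$, while for $k$ with $\Lambda \cdot k \leq T - d\Lambda_N$, the constraint $\Lambda \cdot m \leq d\Lambda_N + t$ imposed by $E_q(d\Lambda_N)$ truncates the sum below the full range needed to match $I_2$, and this truncation is precisely what generates the residual $R$ to be handled inductively.
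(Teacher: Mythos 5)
Your proof is correct, and its core is the same as the paper's: the same splitting of $(E_q(T)u)(t)$ into the sum $I_1$ over $\Lambda\cdot n\le d\Lambda_N+t$ (absorbed by a time-shift of $u$) and the tail $I_2$, the same observation that the tail only involves indices with $\lvert n\rvert\ge d$, and the same use of the multivariate Cayley--Hamilton relation of Lemma~\ref{lem:1} to rewrite $\Xi_n$ in the tail. Where you genuinely diverge is in the globalization to all $T\ge d\Lambda_N$. The paper first restricts to $T\in[d\Lambda_N,d\Lambda_N+\delta]$ with $\delta$ chosen so small that the reindexed sum produces \emph{no} residual (every $n'=n-k$ automatically satisfies $\Lambda\cdot n'\le d\Lambda_N+t$), obtaining the clean identity $E_q(T)u=E_q(d\Lambda_N)(u_1+u_2)$ on that short window, and then propagates to arbitrary $T$ by iterating in steps of $\delta$ via the invariance of $V=\Ran E_q(d\Lambda_N)$ under the flow operator $\Upsilon_q$ and the variation-of-constants formula of Proposition~\ref{prop_var_const_formula}. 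You instead treat arbitrary $T$ in one pass, accept the residual $R$ coming from indices $k$ with $\Lambda\cdot k\le T-d\Lambda_N$, recognize each residual block as $E_q(T-\Lambda\cdot k)$ applied to a restriction of $u$ (minus an $E_q(d\Lambda_N)$-term), and close by strong induction on $\lceil(T-d\Lambda_N)/\Lambda_1\rceil$, using that $\Lambda\cdot k\ge\Lambda_1$ strictly decreases this index. Your route dispenses entirely with the flow operator and the variation-of-constants formula, at the price of heavier bookkeeping in identifying $R$; the paper's route keeps each step computation-free after the initial small-$\delta$ identity but needs the semigroup-type invariance \eqref{eq:invariance}. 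I verified your key identity $(E_q(d\Lambda_N)v_1)(t)=I_2(t)-R(t)$ and the shift identity $E_q(T)u_k=E_q(T-\Lambda\cdot k)(u|_{[0,T-\Lambda\cdot k]})$; both check out, and the base case $T\le d\Lambda_N$ is indeed covered by the monotonicity of $T\mapsto\Ran E_q(T)$ that you establish at the outset.
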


\begin{proof}
Let $q \in [1,+\infty)$. The proof is divided in two steps. We first prove that Equation~\eqref{eq:RanE_con1} is satisfied for $ T\in [d \Lambda_N,d\Lambda_N+\delta]$ and some $\delta>0$. We then deduce from the flow formula \eqref{eq:variation_constant} that Equation~\eqref{eq:RanE_con1} is actually satisfied for all $T\in [d \Lambda_N,+\infty)$.

Let $T>d \Lambda_N$ and $u \in L^{q}([0,T], \mathbb{R}^m)$.
We define
\begin{equation}
\label{eq:RanE_con2}
u_1(s):=u(s+T-d\Lambda_N),\qquad \mbox{ $s \in [0,d \Lambda_N]$,}
\end{equation}
and
\begin{equation}
\label{eq:RanE_con3}
u_2(s):=  -\sum_{\substack{0<|k| \le d,\, k \in \mathbb{N}^N \\ s<\Lambda\cdot k \le s+T-d \Lambda_N }} \alpha_k u(s-\Lambda \cdot k+T-d \Lambda_N), \qquad \mbox{$s \in [0,d\Lambda_N]$},
\end{equation}
where the coefficients  $\alpha_k$ are defined as in Proposition~\ref{lem:1}. The sum in \eqref{eq:RanE_con3} is understood to be zero when the indices are taken in an empty set. For $t \in [-\Lambda_N,0]$, we have
\begin{align}
\left(E_{q}(T)u \right)(t)&= \sum_{\substack{n \in \mathbb{N}^N \\ \Lambda \cdot n \le {T+t}}}  \Xi_n B u(T+t-\Lambda \cdot n) \nonumber\\
&=\sum_{\substack{n \in \mathbb{N}^N \\ \Lambda \cdot n \le d \Lambda_N+t}}  \Xi_n B u(T+t-\Lambda \cdot n)+\sum_{\substack{n \in \mathbb{N}^N \\ d \Lambda_N+t<\Lambda \cdot n \le T+t}}  \Xi_n B u(T+t-\Lambda \cdot n).
\label{eq:RanE_con4}
\end{align}
Notice that
\begin{equation}
\label{eq:RanE_con5}
\sum_{\substack{n \in \mathbb{N}^N \\ \Lambda \cdot n \le d \Lambda_N+t}}  \Xi_n B u(T+t-\Lambda \cdot n)=E_{q}(d\Lambda_N)u_1(t).
\end{equation}

Since $d \Lambda_N+t<\Lambda \cdot n$ for $n \in \mathbb{N}^N$ implies that $|n|\ge d$, we deduce from 
 Lemma~\ref{lem:1} that
\begin{equation}
\label{eq:RanE_con6}
\sum_{\substack{n \in \mathbb{N}^N \\ d \Lambda_N+t<\Lambda \cdot n \le T+t}}  \Xi_n B u(T+t-\Lambda \cdot n)
=-\sum_{\substack{n \in \mathbb{N}^N \\ d \Lambda_N+t<\Lambda \cdot n \le T+t}}
\sum_{\substack{k \in \mathbb{N}^N \\ 0<|k| \le d }}\alpha_k \Xi_{n-k} B u(T+t-\Lambda \cdot n)
.
\end{equation}

The substitution $n'=n-k$ in Equation~\eqref{eq:RanE_con6} yields
\begin{equation}
\label{eq:RanE_con7}
\sum_{\substack{n \in \mathbb{N}^N \\ d \Lambda_N+t<\Lambda \cdot n \le T+t}}  \Xi_n B u(T+t-\Lambda \cdot n)
=-\sum_{\substack{k \in \mathbb{N}^N \\ 0<|k| \le d }}
\sum_{\substack{n' \in \mathbb{N}^N \\ d \Lambda_N+t<\Lambda \cdot (n'+k) \le T+t}}  \alpha_k \Xi_{n'} B u(T+t-\Lambda \cdot (n'+k))
.
\end{equation}

Let $\delta>0$ be such that for all $T\in [d \Lambda_N,d \Lambda_N+\delta]$, $t \in [-\Lambda_N,0]$, $k \in \mathbb{N}^N$ with $0<|k|\le d$, 
and  $n' \in \mathbb{N}^{\mathbb{N}}$, if $d \Lambda_N+t<\Lambda \cdot (n'+k) \le T+t$ then $\Lambda\cdot n' \le d \Lambda_N+t$.  

Letting $T\in [d \Lambda_N,d \Lambda_N+\delta]$, we can thus rewrite Equation~\eqref{eq:RanE_con7} as
\begin{align}
\MoveEqLeft[6] \sum_{\substack{n \in \mathbb{N}^N \\ d \Lambda_N+t<\Lambda \cdot n \le T+t}}  \Xi_n B u(T+t-\Lambda \cdot n) \nonumber \\
& = -\sum_{\substack{n' \in \mathbb{N}^N \\ \Lambda \cdot n' \le d \Lambda_N+t}}\Xi_{n'}B
\sum_{\substack{k \in \mathbb{N}^N,\, 0<|k| \le d \\ d \Lambda_N+t<\Lambda \cdot (n'+k) \le T+t}} \alpha_k  u(T+t-\Lambda \cdot (n'+k))
\nonumber\\
& = E_{q}(d \Lambda_N)u_2(t).
\label{eq:RanE_con8}
\end{align}
Equations~\eqref{eq:RanE_con4}, \eqref{eq:RanE_con5}, and \eqref{eq:RanE_con8} prove that, for $T \in [d\Lambda_N,d\Lambda_N+\delta]$ and $t \in [-\Lambda_N,0]$,
\begin{equation}
\label{eq:RanE_con9}
E_{q}(T)u(t)=E_{q}(d\Lambda_N)u_1(t)+E_{q}(d \Lambda_N)u_2(t).
\end{equation}
From Equation~\eqref{eq:RanE_con9}, we deduce that, 
\begin{equation}
\label{eq:RanE_con10}
\Ran E_{q}(T)=\Ran E_{q}(d\Lambda_N), \qquad\mbox{ $T \in [d \Lambda_N,d\Lambda_N+\delta]$}.
\end{equation}

Let us now extend Equation~\eqref{eq:RanE_con10} to all  $T \in [d \Lambda_N,+\infty)$. Let $V=\Ran E_{q}(d \Lambda_N)$ and $x \in V$. Fix  $u\in L^q([0,d\Lambda_N],\mathbb{R}^m)$ such that $x=E_{q}(d\Lambda_N)u$. For $t\in [0,\delta]$, define $\tilde{u}\in L^q([0,d\Lambda_N+t],\mathbb{R}^m)$ by setting 
$\tilde{u}|_{[0,d\Lambda_N]}=u$
and ${\tilde{u}}(s)=0$ for $s \in [d \Lambda_N,d\Lambda_N+t]$. From the variation-of-constants formula \eqref{eq:variation_constant}, we have
\begin{equation*}
\Upsilon_{q}(t) x= E_{q}(d\Lambda_N+t)\tilde{u} \in \Ran E_{q}(d\Lambda_N+t).
\end{equation*}
Thanks to Equation~\eqref{eq:RanE_con10} we have proved that
\begin{equation}\label{eq:invariance}
\Upsilon_{q}(t)x \in V,\qquad t\in[0,\delta],\ x\in V.
\end{equation}

Let $y \in \Ran E_{q}(T)$ for $T \in [d\Lambda_N+\delta,d\Lambda_N+2 \delta]$ and $u \in L^q([0,T],\mathbb{R}^m)$ such that $y=E_{q}(T)u$. Define $z=E_{q}(d\Lambda_N+\delta)u|_{[0,d\Lambda_N+\delta]} \in V$. The variation-of-constants formula (Equation~\eqref{eq:variation_constant}) gives
\begin{equation}
\label{eq_lem3:4}
y=\Upsilon_{q}(T-d\Lambda_N-\delta)z+E_{q}(T-d\Lambda_N-\delta)\check{u},
\end{equation}
where $\check{u}(\alpha)=u(\alpha+d\Lambda_N+\delta)$ for $\alpha \in [0,T-d\Lambda_N-\delta]$. We deduce 
from \eqref{eq:RanE_con10} and \eqref{eq:invariance}
that $y \in \Ran E_{q}(d\Lambda_N)$, proving  that 
\begin{equation}
\label{eq_lem3:5}
\Ran E_{q}(d \Lambda_N)=\Ran E_{q}(T), \qquad T \in [d\Lambda_N+\delta,d \Lambda_N+2 \delta].
\end{equation}
The iteration of the same process proves that Equation~\eqref{eq_lem3:5} actually holds for all $T\ge d \Lambda_N$.
\end{proof}

\subsection{Upper bound on the minimal control time}

We next provide a  proof of Theorem~\ref{lem:3}, which is a direct consequence 
of the saturation of the range of the endpoint map from time $d\Lambda_N$.

\begin{proof}[Proof Theorem~\ref{lem:3}]
By Theorem~\ref{lem:RanE_con} and since $T\mapsto \Ran E_{q}(T)$ is monotone nondecreasing for the inclusion, we have that 
\[\bigcup_{T \ge 0} \Ran E_{q}(T)=\Ran E_{q}(d\Lambda_N).\]
The conclusion directly follows from Proposition~\ref{prop:first_charact_control}. 
\end{proof}
One of the major questions concerning the approximate and exact controllability in finite time $T$ is to determine the \textit{minimal time} of controllability.

\begin{definition} We define $T_{\min,\,\mathrm{ap},\,q}$ and $T_{\min,\,\mathrm{ex},\,q}$ the \textit{minimal time} of the approximate and exact controllability  respectively as follows:
\begin{align*}
T_{\min,\,\mathrm{ap},\,q}&:=\underset{T \in \mathbb{R}_+}{\inf} \{ \mbox{System~\eqref{system_lin_formel2} is $L^q$ approximately controllable in time $T$}\},\\
T_{\min,\,\mathrm{ex},\,q}&:=\underset{T \in \mathbb{R}_+}{\inf} \{ \mbox{System~\eqref{system_lin_formel2} is $L^q$ exactly controllable in time $T$} \},
\end{align*}
with the convention that $\inf\emptyset = +\infty$.
\end{definition}
Since an immediate inspection of System~\eqref{system_lin_formel2} shows that it is never approximately or exactly controllable before the time $\Lambda_N$, we can recast Theorem~\ref{lem:3} in terms of minimal time of controllability as follows.
\begin{corollary}
\label{Corollary_min_con}
Both times $T_{\min,\,\mathrm{ap},\,q}$ and $T_{\min,\,\mathrm{ex},\,q}$ belong to the set  $[\Lambda_N,d \Lambda_N] \cup \{+\infty\}$.
\end{corollary}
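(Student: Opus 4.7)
The plan is to combine the saturation result of Theorem~\ref{lem:3} with a direct inspection of the endpoint map $E_q(T)$ at small times, which together pinpoint $T_{\min,\,\mathrm{ap},\,q}$ and $T_{\min,\,\mathrm{ex},\,q}$ inside the interval $[\Lambda_N, d\Lambda_N]$ whenever they are finite.

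First I would observe that the set $\mathcal{T}_{\mathrm{ap}} := \{T \geq 0 : \text{System~\eqref{system_lin_formel2} is $L^q$ approximately controllable in time $T$}\}$ is either empty—in which case $T_{\min,\,\mathrm{ap},\,q} = +\infty$ by the stated convention—or it contains $d\Lambda_N$. Indeed, if $\mathcal{T}_{\mathrm{ap}}$ is nonempty, then $L^q$ approximate controllability from the origin holds trivially (take the time $T_{\epsilon,\phi}$ in Definition~\ref{def:contr-from-origin} to be any element of $\mathcal{T}_{\mathrm{ap}}$), and hence by Theorem~\ref{lem:3} the system is $L^q$ approximately controllable in time $d\Lambda_N$, giving $T_{\min,\,\mathrm{ap},\,q} \leq d\Lambda_N$. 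The same reasoning with exact controllability yields $T_{\min,\,\mathrm{ex},\,q} \leq d\Lambda_N$ whenever finite.

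For the lower bound, I would show that no $L^q$ approximate (and hence no $L^q$ exact) controllability can hold in any time $T < \Lambda_N$, by inspecting the endpoint map. Fix $T \in [0, \Lambda_N)$. By Proposition~\ref{prop:first_charact_control}, it suffices to prove that $\Ran E_q(T)$ is not dense in $L^q([-\Lambda_N, 0], \mathbb R^d)$. From Definition~\ref{def_Upsilon_E}, for any $u \in L^q([0,T], \mathbb R^m)$ and any $t \in [-\Lambda_N, -T)$, one has $T + t < 0$, so the condition $\Lambda \cdot n \leq T + t$ in the defining sum of $(E_q(T) u)(t)$ cannot be met by any $n \in \mathbb{N}^N$ (since $\Lambda \cdot n \geq 0$); consequently $(E_q(T) u)(t) = 0$ almost everywhere on $[-\Lambda_N, -T)$. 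Thus $\Ran E_q(T)$ is contained in the closed proper subspace of $L^q([-\Lambda_N, 0], \mathbb R^d)$ consisting of functions vanishing a.e.\ on $[-\Lambda_N, -T)$, so it cannot be dense.

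Combining the two steps yields $T_{\min,\,\mathrm{ap},\,q}, T_{\min,\,\mathrm{ex},\,q} \in [\Lambda_N, d\Lambda_N] \cup \{+\infty\}$. I do not foresee any real obstacle: the substantial work has already been performed in Theorem~\ref{lem:RanE_con} (saturation of the range of the endpoint map) and in Theorem~\ref{lem:3}; the present statement is essentially a reformulation complemented by the elementary small-time observation above.
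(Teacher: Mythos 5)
Your proof is correct and follows essentially the same route as the paper: the upper bound comes from Theorem~\ref{lem:3} exactly as you argue, and the lower bound is what the paper dismisses as ``an immediate inspection of System~\eqref{system_lin_formel2}'', which you have usefully made precise by noting that $\Ran E_{q}(T)$ is contained in the proper closed subspace of functions vanishing a.e.\ on $[-\Lambda_N,-T)$ when $T<\Lambda_N$.
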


In the case $N=d=2$ and $m=1$, it is proved in \cite{Chitour2020Approximate} that either $T_{\min,\,\mathrm{ap},\,2}=+\infty$ or $T_{\min,\,\mathrm{ap},\,2}=T_{\min,\,\mathrm{ex},\,2} = 2 \Lambda_{N}$. In the general case however, $d \Lambda_N$ is not always minimal as proved in \cite{Chitour2020Approximate} 
by looking at the case of commensurable delays.  
It was noticed in the same reference that 
$d \Lambda_N$ is minimal
when controllability holds for systems with  a single input and commensurable delays. We conjecture that this result holds true also for the case of non-necessarily commensurable delays. It would be also interesting to investigate the question of equality between $T_{\min,\,\mathrm{ap},\,q}$ and $T_{\min,\,\mathrm{ex},\,q}$ when both of them are finite.

\section{Hautus--Yamamoto criteria for approximate and exact controllability}
\label{section_Hautus_criteria}

In this section, we stick to Yamamoto's notations used in \cite{YamamotoRealization,yamamoto1989reachability}, which have been introduced in Section~\ref{sec:notation}. 

\subsection{Realization theory}
\label{subsec:real_theory}

When the initial condition is fixed at $0$ (which is the case in Definition~\ref{def:contr-from-origin}), System~\eqref{system_lin_formel2} describes a linear relation between the control $u$ on $[0, T]$ and the state $x$ on $[T - \Lambda_N, T]$, described by the operator $E_q(T)$ introduced in Definition~\ref{def_Upsilon_E}. The main idea in realization theory is to represent such a linear relation between an \emph{input} (here the control $u$) and an \emph{output} 
(here the state $x$), by writing the output as a convolution of the input with a certain kernel. For an introduction to the terminology of realization theory and input-output systems in finite dimension, we refer to the textbooks of Polderman and Willems \cite{polderman1998introduction} or Sontag \cite{sontag}.

Yamamoto's approach to the realization theory for infinite-dimensional systems considers systems in which the input $u$ is applied during a time interval of the form $[-T, 0]$, with $T>0$ arbitrary, and the output is a certain function $t \mapsto y(t)$ defined for positive times, in a suitable functional space. In order to relate System \eqref{system_lin_formel2} with Yamamoto's realization theory, we rewrite it, after suitable time translations, as

\begin{equation}
\label{syst_lin_avec_sortie}
\begin{dcases}
 x(t)=\sum_{j=1}^NA_jx(t-\Lambda_j)+Bu(t),&\text{ for
  $t\ge \inf \supp(u)$}, \\
  x(t)=0,&\text{ for
  $t<\inf \supp(u)$},\\
y(t)=x(t-\Lambda_N),&\text{ for
$t \in [0,+\infty)$},
\end{dcases}
\end{equation}
where the input $u$ belongs to  
\begin{equation*}
\Omega_{q}=\{u\in L^q
(\mathbb{R}_-,\mathbb{R}^m) \mid 
\text{$\supp(u)$ is compact}
\},
\end{equation*}
with $\supp(u)$ denoting the support of $u$.

We aim at writing System~\eqref{syst_lin_avec_sortie} as a convolution operator with a kernel in the space of Radon measures, i.e., we want to find $A \in M_+(\mathbb{R})$ such that the input-output system~\eqref{syst_lin_avec_sortie} can be represented as
\begin{equation}
\label{representation_radon_measure1}
y(t) = (A * u)(t) = \int_{-\infty}^{+\infty} d A(\tau) u(t - \tau), \qquad \mbox{$t\in [0,+\infty)$}.
\end{equation} 
Note that the convolution of a $d \times m$ matrix-valued Radon measure with a compactly supported function in $L^q(\mathbb{R},\mathbb{R}^m)$ belongs to $L^q(\mathbb{R},\mathbb{R}^d)$. Recalling that $\pi$ is the truncation operator on positive times defined in Equation~\eqref{eq:pi}, we can rewrite Equation~\eqref{representation_radon_measure1} in a more convenient way as
\begin{equation}
\label{representation_radon_measure1bis}
y=\pi(A*u).
\end{equation}
To achieve the goal of finding a Radon measure $A$ satisfying Equation~\eqref{representation_radon_measure1bis}, we define the zero-order distributions
\begin{align}
\label{eq:defQ}
Q & :=\delta_{-\Lambda_N} I_d- \sum_{j=1}^N \delta_{-\Lambda_N+\Lambda_j} A_j,\\
\label{eq:defP}
P & := B \delta_0.
\end{align}
The  matrix-valued distributions $Q$ and $P$ are in $M(\mathbb{R}_-)$, the space of Radon measures with compact support included in $\mathbb{R}_-$. These distributions are naturally associated with System~\eqref{syst_lin_avec_sortie} for two major reasons.

We define the state space of System~\eqref{syst_lin_avec_sortie} in terms of the distribution $Q$ as
 \begin{equation}
 \label{def_XQ}
 X^{Q,\,q}:=\left\{y \in L^q_{\rm loc}\left(\mathbb{R}_+,\mathbb{R}^d\right)\suchthat \pi(Q*y)=0  \right\}.
 \end{equation}
The system thus has an input $u$ belonging to $\Omega_q$ and an output $y$ in $ X^{Q,\,q}$. We remark that the set $X^{Q,\,q}$ can be easily identified with the space $L^q\left(\left[0,\Lambda_N\right],\mathbb{R}^d\right)$. In fact, $y \in  X^{Q,\,q}$ if and only if the restriction $y|_{[0,\Lambda_N]}$ is in $L^q([0,\Lambda_N],\mathbb{R}^d)$ and $y$ is the unique extension of $y|_{[0,\Lambda_N]}$ on the interval $[0,+\infty)$ satisfying the condition $\pi(Q*y)=0$.

The distributions $Q$ and $P$ allow us to obtain the Radon measure $A$ representing System~\eqref{syst_lin_avec_sortie} as a convolution operator. Denoting by $\tilde{y}$ the natural extension of the output $y$ on $\mathbb{R}$, i.e., $\tilde{y}(t)=x(t-\Lambda_N)$ for $t \in \mathbb{R}$, the first equation of \eqref{syst_lin_avec_sortie} implies that
\begin{equation}
\label{obtention_convolution1}
\left(Q*\tilde{y}\right)(t)=\left(P*u\right)(t),\quad t \in \mathbb{R}.
\end{equation}
The distribution $Q$ is invertible over $\mathcal{D}'_+(\mathbb{R})$ in convolution sense and the inverse distribution $Q^{-1}$ belongs to $M_+(\mathbb{R})$. More precisely, the distribution
\[
Q^{-1}:=\delta_{\Lambda_N}*  \left( \sum_{n=0}^{+\infty} \left(   \sum_{j=1}^N \delta_{\Lambda_j} A_j \right)^n \right)
\]
is easily seen to be the inverse of $Q$ by a Neumann series argument. We take the convolution product of Equation~\eqref{obtention_convolution1} on the left by $Q^{-1}$ and we obtain
\begin{equation}
\label{obtention_convolution2}
\tilde{y}(t)=\left(Q^{-1}*P*u\right)(t),\quad t \in \mathbb{R}.
\end{equation}
Applying the operator $\pi$ in Equation~\eqref{obtention_convolution2}, we have
\begin{equation}
\label{representation_radon_measure3}
y(\cdot)=\pi \left(A*u\right)(\cdot),\qquad \text{where } A:=Q^{-1}*P.
\end{equation}
Notice also that the measure $A$ belongs to $M_+(\mathbb{R})$. As a consequence, System~\eqref{syst_lin_avec_sortie} is pseudo-rational in the sense of Yamamoto (see, e.g., \cite{yamamoto1989reachability}).

\begin{remark}
Equation~\eqref{representation_radon_measure1} provides an expression for the relation between the input $u$ and the output $y$ of System~\eqref{syst_lin_avec_sortie}. Other expressions for this same relation can be obtained by other means, such as by relying on the operator $E_{q}(T)$ as done in Section~\ref{sec:6} or by expressing $y$ as a Stieltjes integral of $u$ with respect to the fundamental solution of the system, as done in the variation-of-constants formula in \cite{Hale}. While clearly equivalent, some representations can be more suitable than others for a given purpose. The representation through the operator $E_{q}(T)$ was useful in Section~\ref{sec:6} to prove Theorem~\ref{lem:3}, and in the remaining part of the paper we will use \eqref{representation_radon_measure1} to obtain the controllability results from Theorems~\ref{main_result1} and \ref{main_result2}.
\end{remark}

We now characterize the controllability notions from Definition~\ref{def:contr-from-origin} in terms of the above realization theory formalism.
 
\begin{proposition}
\label{def_reachability}
System~\eqref{system_lin_formel2} is
\begin{enumerate}[i)]
\item $L^q$ approximately controllable if and only if for every $\phi \in  X^{Q,\,q}$ there exists a sequence of inputs $(u_n)_{n \in \mathbb{N}} \in (\Omega_q)^{\mathbb{N}}$ such that its associated sequence of outputs $(y_n)_{n \in \mathbb{N}} \in \left(L^q_{\rm loc}\left(\mathbb{R}_+,\mathbb{R}^d\right) \right)^{\mathbb{N}}$ through System~\eqref{syst_lin_avec_sortie} satisfies
\[
y_n \underset{n \to +\infty}{\longrightarrow} \phi \quad \text{in} \quad L^q_{\rm loc}\left(\mathbb{R}_+,\mathbb{R}^d\right);
\]
\item $L^q$ exactly controllable if and only if for every $\phi \in  X^{Q,\,q}$ there exists $u \in \Omega$ such that 
its associated output through System~\eqref{syst_lin_avec_sortie} satisfies
\[
y(\cdot)= \phi(\cdot).
\]
\end{enumerate}
\end{proposition}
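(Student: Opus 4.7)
The proof is essentially a translation between the two formulations via a time shift, and the plan has three ingredients: a correspondence between controls and targets, the identification of the corresponding trajectories, and the verification that the topologies match.

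First, I would set up the identifications. A control $u \in L^q([0,T], \mathbb R^m)$ for System~\eqref{system_lin_formel2} corresponds to $\tilde u \in \Omega_q$ defined by $\tilde u(s) := u(s+T)$ for $s \in [-T,0]$ and extended by zero, which gives an $L^q$-norm-preserving bijection (modulo the freedom in $T$). A target $\phi \in L^q([-\Lambda_N,0], \mathbb R^d)$ corresponds to the unique element $\Phi$ of $X^{Q,q}$ whose restriction to $[0,\Lambda_N]$ equals $\phi(\cdot - \Lambda_N)$ and which is extended to $\mathbb R_+$ via $\pi(Q \ast \Phi)=0$, i.e.\ by iteratively applying the homogeneous dynamics; this identification is bicontinuous, as already noted in the paper.

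Next, I would verify the trajectory correspondence. By uniqueness of solutions (Proposition~\ref{Prop1:main_result}) and the time-translation invariance of the convolution structure, the state $\tilde x$ produced from the zero history by $\tilde u$ in System~\eqref{syst_lin_avec_sortie} satisfies $\tilde x(\cdot) = x(\cdot + T)$, where $x$ is the state produced from $x_0 \equiv 0$ by $u$ in System~\eqref{system_lin_formel2}. Consequently the output $y(t)=\tilde x(t-\Lambda_N)=x(t+T-\Lambda_N)$ coincides, under the identification above, with $x_T$ on $[-\Lambda_N,0]$. Moreover $y \in X^{Q,q}$: starting from $y = \pi(A \ast \tilde u)$ with $A = Q^{-1} \ast P$, using $Q \ast A = P = B \delta_0$ together with the identity $\pi(Q \ast \pi \alpha) = \pi(Q \ast \alpha)$ valid when $\supp Q \subset \mathbb R_-$ (a property of $\pi$ listed in Lemma~\ref{lem:sumlemmasYamamoto}), one gets $\pi(Q \ast y) = \pi(B \tilde u) = 0$.

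Finally, since both $\Phi$ and each $y_n$ lie in $X^{Q,q}$, their restrictions to any compact $[0,M] \subset \mathbb R_+$ are determined by their restrictions to $[0,\Lambda_N]$ through finitely many iterations of the homogeneous recursion, a continuous operation in $L^q$ on each compact subinterval. Hence $y_n \to \Phi$ in $L^q_{\rm loc}(\mathbb R_+, \mathbb R^d)$ if and only if $y_n|_{[0,\Lambda_N]} \to \Phi|_{[0,\Lambda_N]}$ in $L^q$, which under the identifications above is exactly the $\epsilon$-criterion of Definition~\ref{def:contr-from-origin}; the exact case is analogous, replacing approximation by equality. The main obstacle is not conceptual but bookkeeping: keeping the time shifts, parametrizations, and norms consistent in both directions requires care, and one must invoke the appropriate property of $\pi$ at the right moment to ensure that the outputs indeed live in $X^{Q,q}$.
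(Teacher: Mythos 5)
Your proposal is correct. The paper states Proposition~\ref{def_reachability} without proof, treating it as an immediate consequence of the identifications set up in Section~\ref{subsec:real_theory} (the time shift between \eqref{system_lin_formel2} and \eqref{syst_lin_avec_sortie}, and the identification of $X^{Q,\,q}$ with $L^q([0,\Lambda_N],\mathbb{R}^d)$), and your argument supplies exactly the omitted bookkeeping: the norm-preserving correspondence of controls and targets, the verification via Lemma~\ref{lem:sumlemmasYamamoto} that outputs lie in $X^{Q,\,q}$, and the equivalence between convergence in $L^q_{\rm loc}(\mathbb{R}_+,\mathbb{R}^d)$ and $L^q$ convergence of the restrictions to $[0,\Lambda_N]$ for elements of $X^{Q,\,q}$.
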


\begin{remark}
Following Yamamoto (see, e.g., \cite{yamamoto1989reachability}), we will refer in the sequel to the characterization of approximate and exact controllability of System~\eqref{system_lin_formel2} given in Proposition~\ref{def_reachability} as \emph{approximate and exact controllability of System~\eqref{syst_lin_avec_sortie}}. Note also that, in \cite{yamamoto1989reachability}, approximate (respectively, exact) controllability of System~\eqref{system_lin_formel2} is also called \emph{quasi-rea\-cha\-bi\-lity} (respectively, \emph{reachability}).
\end{remark}

\subsection{Preliminary properties and proofs of Propositions~\ref{Prop2:main_result0} and \ref{Prop2:main_result}}
\label{subsec:useful_properties}

Before making use of the realization theory to provide criteria for approximate and exact controllabilities in Sections~\ref{subsec:Approximate_contr_HY} and \ref{subsec:Exact_contr_HY}, respectively, we provide in this section preliminary technical results relating properties of the (Laplace transform of the) distributions $Q$ and $P$ introduced in the framework of realization theory with $H$ associated with System~\eqref{system_lin_formel2}. In particular, we shall also obtain from those technical results a proof of Propositions~\ref{Prop2:main_result0} and \ref{Prop2:main_result}.

We start by considering the Laplace transforms of the distributions $Q$ and $P$, which are given by (see formulas \eqref{laplace_transform_radon_measure} and \eqref{laplace_transform_dirac})
\begin{equation*}
\widehat{Q}(p)=e^{p \Lambda_{N}}I_d- \sum\limits_{j=1}^N e^{ p(\Lambda_N- \Lambda_j)} A_j,\quad \widehat{P}(p)\equiv \widehat{P}=B, \qquad p \in \mathbb{C}.
\end{equation*}
In the following we  use in an equivalent way $B$ and $\widehat{P}$ depending on the context. Actually, when referring to Yamamoto's articles, it is more practical to use $\widehat{P}$, while it is better to use $B$ when we deal with Hautus--Yamamoto criteria for System~\eqref{system_lin_formel2}.

We first notice that $H(\cdot)$ and $\widehat{Q}(\cdot)$ satisfy the relation
\begin{equation}
\label{lien_entre_les_deux_fonctions}
H(p)=e^{-p \Lambda_N} \widehat{Q}(p),\qquad p \in \mathbb{C}.
\end{equation}
In Proposition~\ref{prop:lienQH} given below, we link the rank condition of the operator $H(\cdot)$ associated with System~\eqref{system_lin_formel2} with the rank condition of the operator $\widehat{Q}(\cdot)$ associated with System~\eqref{syst_lin_avec_sortie}. As a preliminary step, let us prove the following technical lemma.

\begin{lemma}
\label{lem:sous_suite1}
With the notations introduced above, the following properties hold true:
\begin{enumerate}[i)]
\item\label{item:suites-1} There exists $M\in \overline{H(\mathbb{C})}$ such that $\rank\left[M,B\right]<d$ if and only if there exist $g \in \mathbb{C}^d$ and $(p_n)_{n \in \mathbb{N}} \in \mathbb{C}^{\mathbb{N}}$ with bounded real part such that $\|g\|=1$, $g^TB=0$, and $\lim_{n\to\infty}g^TH(p_n)=0$.

\item\label{item:suites-2} Under the assumption that $\rank\left[A_N,B\right]=d$, there exists $\widetilde{Q}\in \overline{\widehat{Q}(\mathbb{C})}$ such that
$\rank\bigl[\widetilde{Q},\allowbreak B\bigr] \allowbreak <d$ if and only if there exist $g \in \mathbb{C}^d$ and $(p_n)_{n \in \mathbb{N}} \in \mathbb{C}^{\mathbb{N}}$ with bounded real part such that $\|g\|=1$, $g^TB=0$, and $\lim_{n\to\infty} g^T\widehat{Q}(p_n) = 0$.

\item\label{item:suites-3} Under the assumption that $\rank\left[A_N,B\right]=d$, Condition \ref{exa-b} in Proposition~\ref{Prop2:main_result} is not satisfied if and only if there exist $g\in \mathbb{C}^d$ and $(p_n)_{n \in \mathbb N} \in \mathbb{C}^{\mathbb{N}}$ with bounded real part such that $\|g\|=1$, $g^T B=0$, and $\lim_{n\to\infty} g^T H(p_n) = 0$.
\end{enumerate} 
\end{lemma}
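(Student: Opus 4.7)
The three items of Lemma~\ref{lem:sous_suite1} share a common backbone: each is an equivalence between (a) the existence of a matrix in the closure of $H(\mathbb C)$ or $\widehat Q(\mathbb C)$ that drops rank together with $B$, and (b) the existence of a unit covector $g\in\mathbb C^d$ with $g^TB=0$ and a sequence $(p_n)\subset\mathbb C$ of bounded real part realizing that drop asymptotically along $H$. The ``if'' direction is a straightforward compactness argument: once $\Re(p_n)$ is bounded, the tuple $(e^{-p_n\Lambda_1},\dots,e^{-p_n\Lambda_N})$ stays in a bounded subset of $\mathbb C^N$, so after extraction $H(p_n)$ converges to some $M\in\overline{H(\mathbb C)}$ and $\widehat Q(p_n)=e^{p_n\Lambda_N}H(p_n)$ (via \eqref{lien_entre_les_deux_fonctions}) converges to some $\widetilde Q\in\overline{\widehat Q(\mathbb C)}$, on which the covector $g$ annihilates the column span together with $B$.

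The ``only if'' direction reduces, in each case, to extracting from an arbitrary defining sequence $(p_n')$ a subsequence of bounded real part. For item~\ref{item:suites-1}, given $M=\lim H(p_n')$ and $g$ realizing the rank deficiency, the regime $\Re(p_n')\to+\infty$ is excluded because $H(p)\to I_d$, which would force $g^TM=g^T\neq 0$. The regime $\Re(p_n')\to-\infty$ is excluded by a dominant-term analysis of $g^TH(p)=g^T-\sum_je^{-p\Lambda_j}g^TA_j$: letting $k$ be the largest index with $g^TA_k\neq 0$, the term $e^{-p_n'\Lambda_k}g^TA_k$ dominates all others in modulus and forces $\|g^TH(p_n')\|\to\infty$, while if no such $k$ exists one directly has $g^TH(p_n')\equiv g^T\neq 0$; in either case this contradicts $g^TH(p_n')\to 0$.

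For items~\ref{item:suites-2} and \ref{item:suites-3}, the same strategy applies, but now the exclusion of $\Re(p_n')\to-\infty$ uses decisively the hypothesis $\rank[A_N,B]=d$. In item~\ref{item:suites-2}, the case $\Re(p_n')\to+\infty$ is ruled out by $|e^{p_n'\Lambda_N}|\to\infty$, which blows up $\|\widehat Q(p_n')\|$ while $\widetilde Q$ is finite; the case $\Re(p_n')\to-\infty$ yields $\widehat Q(p)\to -A_N$, so $g^TA_N=0$, contradicting the rank condition since $g^TB=0$ and $\|g\|=1$. Once $\Re(p_n')$ is bounded, passage between $\widehat Q$- and $H$-statements is immediate via the bounded factor $e^{\pm p_n'\Lambda_N}$. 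In item~\ref{item:suites-3}, the negation of Condition~\ref{exa-b} produces sequences $(g_n,p_n)$ with $\|g_n\|=1$ and $\|g_n^TH(p_n)\|+\|g_n^TB\|\to 0$; extracting $g_n\to g$ by compactness of the unit sphere gives $\|g\|=1$ and $g^TB=0$, and the same dichotomy at $\Re(p_n)\to\pm\infty$---now applied with the varying covector $g_n$---excludes unbounded real parts, provided one first observes that $\|g_n^TA_N\|$ stays bounded below by continuity (since $g^TA_N\neq 0$ by $\rank[A_N,B]=d$). A final uniform-boundedness argument on $H(p_n)$ along the bounded-real-part subsequence lets one replace $g_n$ by its limit $g$ in the limit relation.

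The main obstacle is the $\Re(p_n')\to-\infty$ analysis in items~\ref{item:suites-2} and \ref{item:suites-3}, where one must combine the precise asymptotics of $\widehat Q(p)$ (or $H(p)$) at left infinity with the rank condition on $[A_N,B]$; item~\ref{item:suites-3} adds the technical complication of a varying covector, which forces a preliminary compactness extraction and a continuity argument on $g\mapsto g^TA_N$ to maintain a uniform dominant-term estimate.
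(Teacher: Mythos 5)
Your proposal is correct and follows essentially the same route as the paper: a compactness argument on bounded vertical strips for the ``if'' directions, and exclusion of $\Re(p_n)\to\pm\infty$ in the ``only if'' directions via the asymptotics $H(p)\to I_d$, $\widehat{Q}(p)\to -A_N$, and the rank condition on $[A_N,B]$ (with the same extraction of $g_n \to g$ and boundedness of $H(p_n)$ to handle the varying covector in the third item). Your explicit row-level dominant-term estimate on $g^T H(p)$ as $\Re(p)\to-\infty$ simply replaces the paper's appeal to Proposition~\ref{prop_zero_Q0} and the divergence in norm of $H(p)$, so the substance is the same.
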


\begin{proof}
We start by proving Item~\ref{item:suites-1}.  Let $g\in \mathbb{C}^d$ and $(p_n)_{n\in \mathbb{N}}$ with bounded real part be such that $\|g\|=1$, $g^T B=0$ and $\lim_{n\to\infty}g^T H(p_n) =0$. Since $H(\cdot)$ is uniformly bounded on any bounded vertical strip, it follows that, up to a subsequence, $H(p_n)$ converges to some matrix $M \in \mathcal{M}_{d,d}(\mathbb{C})$ as $n\to\infty$. We deduce that $g^T M=0$ and $g^T B=0$, proving that $\rank\left[M,B\right]<d$. Conversely, assume that $g \in \mathbb{C}^d$ and $M$ are such that $\|g\|=1$, $g^TB=0$, $g^TM=0$, and $M=\lim_{n\to\infty}H(p_n)$ for some sequence  $(p_n)_{n \in \mathbb{N}} \in \mathbb{C}^{\mathbb{N}}$. The sequence $(p_n)_{n\in \mathbb{N}}$ has bounded real part because of the following properties of $H$: $H(p)$ is nonsingular for $p$ out of a bounded vertical strip (see Proposition~\ref{prop_zero_Q0}), $H(p)$ converges to $I_d$ when the real part of $p$ tends to $+\infty$, and $H(p)$ diverges in norm when the real part of $p$ tends to $-\infty$ (cf.~the proof of Proposition~\ref{prop_zero_Q0}). This concludes the proof of the converse implication. 

The proof of Item~\ref{item:suites-2} is similar to that of Item~\ref{item:suites-1}, with $\widehat Q$ playing the role of $H$. The only difference is that, in the second part of the argument, one needs to use the assumption that $\rank\left[A_N,B\right]=d$ in order to ensure that the sequence $(p_n)_{n \in \mathbb{N}} \in \mathbb{C}^{\mathbb{N}}$ has bounded real part, since $\widehat Q(p)$ converges to $-A_N$ (instead of $I_d$) when the real part of $p$ tends to $-\infty$ (instead of $+\infty$).

Let us prove Item~\ref{item:suites-3}. If  condition \ref{exa-b} in Proposition~\ref{Prop2:main_result}
is not satisfied, there exist a sequence $(p_n)_{n \in \mathbb{N}} \in \mathbb{C}^{\mathbb{N}}$ and a sequence of vectors $(g_n)_{n \in \mathbb{N}} \in \left(\mathbb{C}^d\right)^{\mathbb{N}}$ such that $\|g_n^T\|=1$  for all $n \in \mathbb{N}$,  $\lim_{n\to\infty} g_n^TB = 0$, and $\lim_{n\to\infty} g_n^T H(p_n) = 0$. Proposition~\ref{prop_zero_Q0} and $\rank\left[A_N,B \right]=d$ imply that the real part of $(p_n)_{n \in \mathbb{N}}$ is bounded, and thus $(H(p_n))_{n \in \mathbb N}$ is also bounded. Without loss of generality, $\lim_{n\to\infty} g_n = g$ for some vector $g \in \mathbb{C}^d$ and we deduce that $\|g\|=1$, $g^TB=0$, and  $\lim_{n\to\infty}g^T H(p_n) = 0$, which proves one of the two implications. The proof of the converse is obvious.
\end{proof}

Thanks to Lemma~\ref{lem:sous_suite1}, we are able to prove the following proposition establishing a link between the rank properties of $H(\cdot)$ and $\widehat{Q}(\cdot)$.

\begin{proposition}
\label{prop:lienQH}
The following two properties hold true:
\begin{enumerate}[i)]
\item \label{prop:lienHQ1} $\rank\left[H(p),B\right]=d$ for every $p \in \mathbb{C}$ if and only if $\rank\left[\widehat{Q}(p),B\right]=d$ for every $p \in \mathbb{C}$, 
\item \label{prop:lienHQ2} Under the assumption that $\rank\left[A_N,B\right]=d$, we have that $\rank\left[M,B\right]=d$  for every $M\in \overline{H(\mathbb{C})}$ if and only if $\rank\left[ \widetilde{Q},B\right]=d$ for every $\widetilde{Q}\in \overline{\widehat{Q}(\mathbb{C})}$.
\end{enumerate}
\end{proposition}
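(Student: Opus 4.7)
The plan hinges on the identity \eqref{lien_entre_les_deux_fonctions}, namely $H(p) = e^{-p\Lambda_N}\widehat{Q}(p)$, which says that $H(p)$ and $\widehat{Q}(p)$ differ by a \emph{nonzero} scalar factor for every $p \in \mathbb{C}$. The two items of the proposition then follow by transferring, in two slightly different ways, this pointwise proportionality to the rank of the augmented matrices.

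For Item~\ref{prop:lienHQ1}, I would argue directly: since $e^{-p\Lambda_N}\neq 0$, the column space of $H(p)$ equals the column space of $\widehat Q(p)$, hence the column space of $[H(p),B]$ equals that of $[\widehat Q(p),B]$. This gives $\rank[H(p),B]=\rank[\widehat Q(p),B]$ for every $p \in \mathbb{C}$, and the equivalence follows immediately.

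For Item~\ref{prop:lienHQ2}, the scalar $e^{-p\Lambda_N}$ is no longer the same on the two sides because we are taking limits in $p$, so I would use Lemma~\ref{lem:sous_suite1} to translate both rank conditions into the existence of a unit vector $g\in\mathbb C^d$ with $g^T B=0$ together with a sequence $(p_n)\subset \mathbb C$ of bounded real part along which $g^T H(p_n)\to 0$ (Item~\ref{item:suites-1}) or $g^T \widehat Q(p_n)\to 0$ (Item~\ref{item:suites-2}). The bridge between the two is the pointwise relation
\[
g^T H(p_n) = e^{-p_n\Lambda_N}\, g^T \widehat Q(p_n),
\]
combined with the observation that, if $\Re(p_n)$ is bounded, then $|e^{-p_n\Lambda_N}|=e^{-\Lambda_N\Re(p_n)}$ lies in a compact subinterval of $(0,+\infty)$, in particular bounded away from both $0$ and $\infty$. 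Consequently $g^T H(p_n)\to 0$ if and only if $g^T \widehat Q(p_n)\to 0$ (along the same sequence, without even passing to a subsequence). This shows that the negations of the two rank conditions in Item~\ref{prop:lienHQ2} are equivalent under the assumption $\rank[A_N,B]=d$, and hence the conditions themselves are equivalent.

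The only place where care is needed is ensuring that the sequences $(p_n)$ furnished by Lemma~\ref{lem:sous_suite1} really do have bounded real part on both sides: for $H$ this is controlled by the behaviour of $H(p)$ as $\Re(p)\to\pm\infty$ (Proposition~\ref{prop_zero_Q0}), while for $\widehat Q$ the asymptotics as $\Re(p)\to-\infty$ are governed by $A_N$, which is why the hypothesis $\rank[A_N,B]=d$ is used in Item~\ref{prop:lienHQ2}; but this is precisely what Lemma~\ref{lem:sous_suite1} already encapsulates. I expect no real obstacle beyond citing the lemma correctly and invoking the two-sided bound on $|e^{-p_n\Lambda_N}|$.
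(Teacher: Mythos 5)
Your proposal is correct and follows essentially the same route as the paper: Item i) is deduced directly from the scalar relation $H(p)=e^{-p\Lambda_N}\widehat{Q}(p)$, and Item ii) is obtained by passing through the sequence characterizations of Lemma~\ref{lem:sous_suite1} and observing that $\lvert e^{-p_n\Lambda_N}\rvert$ is bounded away from $0$ and $\infty$ along sequences with bounded real part. No gaps.
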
    

\begin{proof}
Item~\ref{prop:lienHQ1} is a straightforward consequence of Equation~\eqref{lien_entre_les_deux_fonctions}. Let us now prove Item~\ref{prop:lienHQ2}. Note that, thanks to Lemma~\ref{lem:sous_suite1}, there exists $M\in \overline{H(\mathbb{C})}$ such that  $\rank\left[M,B\right]<d$ if and only if there exists
$g \in \mathbb{C}^d$ and 
$(p_n)_{n \in \mathbb{N}} \in \mathbb{C}^{\mathbb{N}}$ with bounded real part such that $\|g\|=1$, $g^TB=0$, and $\lim_{n\to\infty}g^TH(p_n) = 0$. By Equation~\eqref{lien_entre_les_deux_fonctions}, we can rewrite the last relation as $\lim_{n\to\infty}g^Te^{-p_n \Lambda_N}\widehat{Q}(p_n) = 0$, which is equivalent to $\lim_{n\to\infty}g^T\widehat{Q}(p_n) = 0$, since the real part of $(p_n)_{n \in \mathbb{N}} \in \mathbb{C}^{\mathbb{N}}$ is bounded.
\end{proof}

A simple adaptation of Lemma~\ref{lem:sous_suite1}, giving an analogue of Item~\ref{item:suites-3} in terms of $\widehat Q$, allows us to prove the following proposition.

\begin{proposition}
\label{prop:eq_Q_corona_thm}
Under the assumption that $\rank\left[A_N,B\right]=d$, the following statements are equivalent:
\begin{enumerate}[(a)]
\item\label{corona-a} $\rank\left[\widetilde{Q},B\right]=d$ for every $\widetilde{Q}\in \overline{\widehat{Q}(\mathbb{C})}$,

\item \label{corona-b} there exists $\alpha>0$ such that, for every $p \in \mathbb{C}$,
\[
\inf \left\{ \vertii{g^T \widehat{Q}(p) }+\vertii{g^T B}\suchthat  \mbox{$g\in \mathbb{C}^d$, $\|g^T \|=1$} \right\} \ge \alpha. 
\]
\end{enumerate}
\end{proposition}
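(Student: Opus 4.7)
The plan is to mirror closely the argument for Item~\ref{item:suites-3} of Lemma~\ref{lem:sous_suite1}, working with $\widehat{Q}$ in place of $H$. First I would establish the analogue of Item~\ref{item:suites-3}: under $\rank[A_N,B]=d$, Condition~\ref{corona-b} fails if and only if there exist $g \in \mathbb{C}^d$ with $\|g\|=1$ and a sequence $(p_n)_{n\in\mathbb{N}} \in \mathbb{C}^\mathbb{N}$ \emph{with bounded real part} such that $g^T B = 0$ and $g^T \widehat{Q}(p_n) \to 0$. Indeed, if~\ref{corona-b} fails, one extracts sequences $(p_n)$ and $(g_n)$ with $\|g_n\|=1$, $g_n^T B \to 0$, $g_n^T \widehat{Q}(p_n) \to 0$; passing to a subsequence, $g_n \to g$ with $\|g\|=1$ and $g^T B = 0$.

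The main obstacle here is exactly controlling $\Re(p_n)$. For this step, I would exploit the explicit form $\widehat{Q}(p) = e^{p\Lambda_N} I_d - \sum_{j=1}^N e^{p(\Lambda_N - \Lambda_j)} A_j$ and study its two asymptotic regimes. As $\Re(p) \to +\infty$, the dominant term is $e^{p\Lambda_N} I_d$, forcing $\|g_n^T \widehat{Q}(p_n)\| \ge |e^{p_n\Lambda_N}|(1 - o(1)) \to \infty$, which rules out this regime. As $\Re(p) \to -\infty$, one has $\widehat{Q}(p) \to -A_N$; then $g_n^T \widehat{Q}(p_n) \to 0$ combined with $g_n \to g$ yields $g^T A_N = 0$, which together with $g^T B = 0$ and $\|g\|=1$ contradicts $\rank[A_N,B]=d$. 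Hence $(\Re(p_n))$ is bounded. The converse implication is immediate: if such $g$ and $(p_n)$ exist, then taking this same $g$ in the infimum defining~\ref{corona-b} gives values $\|g^T \widehat{Q}(p_n)\| + \|g^T B\| \to 0$, so no uniform lower bound $\alpha>0$ can hold.

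With this bridge in place, the equivalence \ref{corona-a} $\Leftrightarrow$ \ref{corona-b} is routine. For not \ref{corona-b} $\Rightarrow$ not \ref{corona-a}: since $(p_n)$ has bounded real part, the matrix-valued exponential sums defining $\widehat{Q}(p_n)$ remain bounded, so along a subsequence $\widehat{Q}(p_n)$ converges to some $\widetilde{Q} \in \overline{\widehat{Q}(\mathbb{C})}$; in the limit $g^T \widetilde{Q} = 0$ and $g^T B = 0$ with $\|g\|=1$, so $\rank[\widetilde{Q},B] < d$. Conversely, for not \ref{corona-a} $\Rightarrow$ not \ref{corona-b}: pick $\widetilde{Q} \in \overline{\widehat{Q}(\mathbb{C})}$ with $\rank[\widetilde{Q},B] < d$ and $g$ with $\|g\|=1$, $g^T \widetilde{Q} = 0$, $g^T B = 0$; choosing $(p_n)$ with $\widehat{Q}(p_n) \to \widetilde{Q}$ makes the infimum in~\ref{corona-b} upper bounded by $\|g^T \widehat{Q}(p_n)\| + \|g^T B\| \to 0$, ruling out any $\alpha > 0$.

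Modulo the asymptotic analysis of $\widehat{Q}$, which is where all the content lies, the proof is a direct transcription of the scheme used in Lemma~\ref{lem:sous_suite1} and the argument behind Proposition~\ref{prop:lienQH}. One could alternatively deduce the statement from Proposition~\ref{Prop2:main_result} combined with Item~\ref{prop:lienHQ2} of Proposition~\ref{prop:lienQH}, by rewriting $g^T \widehat{Q}(p) = e^{p\Lambda_N} g^T H(p)$ and observing that bounded $\Re(p)$ makes $|e^{p\Lambda_N}|$ bounded away from $0$ and $\infty$; I would mention this shortcut but prefer the direct route above, since the bounded-real-part analysis must be done in any case.
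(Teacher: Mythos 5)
Your proposal is correct and follows essentially the same route as the paper, which proves this proposition precisely by adapting Item~\ref{item:suites-3} of Lemma~\ref{lem:sous_suite1} to $\widehat{Q}$ (the paper only states this in one sentence, and your bounded-real-part analysis — $\widehat{Q}(p)\sim e^{p\Lambda_N}I_d$ as $\Re(p)\to+\infty$ and $\widehat{Q}(p)\to -A_N$ as $\Re(p)\to-\infty$, the latter being where $\rank[A_N,B]=d$ enters — is exactly the content of that adaptation, as already indicated in the proof of Item~\ref{item:suites-2}). The compactness arguments linking the sequence characterization to Conditions~\ref{corona-a} and \ref{corona-b} are also the ones the paper intends.
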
    

We conclude this section by providing the proofs of Propositions~\ref{Prop2:main_result0} and \ref{Prop2:main_result}.

\begin{proof}[Proof of Propositions~\ref{Prop2:main_result0} and~\ref{Prop2:main_result}]
We just prove Proposition~\ref{Prop2:main_result} because the proof of Proposition~\ref{Prop2:main_result0} can be obtained following similar arguments, based on a simpler version of Lemma~\ref{lem:sous_suite1}, adapted to the case of approximate controllability.

The equivalence between \ref{exa-a} and \ref{exa-b} is a consequence of Items~\ref{item:suites-1} and \ref{item:suites-3} of Lemma~\ref{lem:sous_suite1}. Let us now prove that \ref{exa-b'} implies \ref{exa-b}. If \ref{exa-b} is not satisfied, it follows by Lemma~\ref{lem:sous_suite1} that there exist  $g\in \mathbb{C}^d$ and a sequence $(p_n)_{n \in \mathbb{N}}\in \mathbb{C}^{\mathbb{N}}$ with a bounded real part such that $\|g\|=1$, $g^T B=0$, and $\lim_{n\to\infty} g^T H(p_n) = 0$. Since $H(\cdot)$ is uniformly bounded on every bounded vertical strip of $\mathbb{C}$, we deduce that condition \ref{exa-b'} is not satisfied. Thus \ref{exa-b'} implies \ref{exa-b}. The converse implication is obvious.
\end{proof}

\subsection{Approximate controllability}    
\label{subsec:Approximate_contr_HY}

The approximate controllability criterion for the approximate controllability of System~\eqref{syst_lin_avec_sortie} is an application of the 
paper \cite{yamamoto1989reachability} by Yamamoto.

\begin{theorem}
\label{th_Yamamoto1}
Let $q \in [1,+\infty)$. System \eqref{syst_lin_avec_sortie} is $L^q$ approximate controllable if and only if the following conditions hold true:
\begin{enumerate}[i)]
\item \label{assumption1-Lq}$\rank\left[\widehat{Q}(p),B\right]=d$ for every $p \in \mathbb{C}$,
\item \label{assumption2-Lq} $\rank\left[A_N,B\right]=d$.
\end{enumerate}
\end{theorem}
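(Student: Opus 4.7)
The plan is to apply Yamamoto's approximate reachability theorem from \cite{yamamoto1989reachability} to the pseudo-rational pair $(Q,P)$ constructed in Section~\ref{subsec:real_theory}. That theorem characterizes the $L^q$ approximate reachability of System~\eqref{syst_lin_avec_sortie} by the approximate left coprimeness of $(Q,P)$ in the convolution algebra $\mathcal{D}'_+(\mathbb{R})$, which, via a Paley--Wiener-type transfer, is equivalent to a Hautus-type rank condition on the Laplace transforms $\widehat{Q}(p)$ and $\widehat{P}(p)=B$, supplemented by a behavior condition at spectral infinity. The core task is then to identify conditions~\ref{assumption1-Lq} and~\ref{assumption2-Lq} as the concrete form of this Hautus-plus-infinity criterion in our setting.

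For condition~\ref{assumption1-Lq}, I would work with the pointwise Hautus part. For necessity, any left null vector $g\in\mathbb{C}^d$ of $[\widehat{Q}(p_0),B]$ at some $p_0\in\mathbb{C}$ gives rise to a nontrivial continuous linear form on $X^{Q,q}$ built from the exponential $t\mapsto e^{-p_0 t}$ that annihilates the range of the input-output map $u\mapsto\pi(A*u)$, ruling out density. For sufficiency, I would transfer Yamamoto's approximate-B\'ezout construction: the pointwise rank condition, together with the behavior at infinity enforced by~\ref{assumption2-Lq}, yields sequences $(R_n,S_n)$ with $Q*R_n+P*S_n\to\delta_0 I_d$, and hence density of the range in $X^{Q,q}$.

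For condition~\ref{assumption2-Lq}, I would analyze the limiting behavior of $\widehat{Q}$ at spectral infinity. From the explicit form
\[
\widehat{Q}(p) = e^{p\Lambda_N} I_d - \sum_{j=1}^N e^{p(\Lambda_N-\Lambda_j)} A_j,
\]
one sees that $\widehat{Q}(p)\to -A_N$ as $\Re(p)\to -\infty$, so $-A_N\in\overline{\widehat{Q}(\mathbb{C})}$; by Item~\ref{item:suites-2} of Lemma~\ref{lem:sous_suite1} this forces $\rank[A_N,B]=d$ as a necessary condition. On the other hand, $e^{-p\Lambda_N}\widehat{Q}(p)=H(p)\to I_d$ as $\Re(p)\to +\infty$, so no obstruction can arise on the right-hand side of the complex plane, and Proposition~\ref{prop_zero_Q0} confines any potential rank deficiency of $[\widehat{Q}(p),B]$ to a bounded vertical strip of $\mathbb{C}$. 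Together with~\ref{assumption1-Lq}, this yields the uniform Hautus estimate required for approximate coprimeness.

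The main obstacle will be to adapt Yamamoto's characterization---most naturally phrased for a distributional or $L^2$ state space---uniformly to every $q\in[1,+\infty)$. I expect this to follow from the identification of $X^{Q,q}$ with $L^q([0,\Lambda_N],\mathbb{R}^d)$ established in Section~\ref{subsec:real_theory} and from the fact that the obstructing linear forms---functionals of the type $y\mapsto g^T\int y(t)e^{-pt}\,dt$ localized on $[0,\Lambda_N]$ via the state-space identification---act boundedly on every $L^q$, so the $q$-independent rank conditions indeed govern approximate controllability for every such $q$.
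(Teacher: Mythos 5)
Your overall strategy---invoking Yamamoto's quasi-reachability criterion for the pseudo-rational pair $(Q,P)$ and then identifying the two rank conditions---is exactly the paper's route: the proof there consists of verifying the hypotheses of Corollary~4.10 of \cite{yamamoto1989reachability} to get the $L^2$ statement, and then transferring it to general $q$. Two of your supporting arguments, however, have genuine gaps. First, your justification of the necessity of $\rank[A_N,B]=d$ is a non sequitur for \emph{approximate} controllability. You observe that $\widehat{Q}(p)\to -A_N$ as $\Re(p)\to-\infty$ and invoke Item~\ref{item:suites-2} of Lemma~\ref{lem:sous_suite1}; but that item is stated \emph{under the hypothesis} $\rank[A_N,B]=d$, and it concerns the condition $\rank[M,B]=d$ for $M$ in the \emph{closure} of $\widehat{Q}(\mathbb{C})$, which is the strictly stronger condition tied to exact controllability (precisely the gap between Theorem~\ref{main_result1} and Conjecture~\ref{conj:HY-exact}). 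A rank deficiency at the limit point $-A_N$ is perfectly compatible with the pointwise condition~\ref{assumption1-Lq}, so nothing forces $\rank[A_N,B]=d$ by this route. In the paper, condition~\ref{assumption2-Lq} enters through the structural decomposition $Q=Q_0+Q_1$ with instantaneous part $Q_0=\delta_0 A_N$ (up to sign) and $\supp Q_1\subset[-\Lambda_N,-\Lambda_N+\Lambda_{N-1}]$ bounded away from the origin; this is a hypothesis of Yamamoto's corollary, and the rank condition on the instantaneous part is part of the equivalence stated there, not something deducible from condition~\ref{assumption1-Lq}.

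Second, your treatment of the $q$-dependence covers only half of what is needed. The remark that the obstructing exponential functionals act boundedly on every $L^q([0,\Lambda_N],\mathbb{R}^d)$ shows that a rank deficiency destroys approximate controllability for every $q$ (necessity), but it does not show that the rank conditions imply density of the reachable set in $X^{Q,q}$ for $q>2$: density in $L^2$ does not imply density in $L^q$ when $q>2$, since the inclusion goes the wrong way on a bounded interval. The paper closes this by a density argument on the \emph{input} side: for $q_2\ge q_1$ it uses the density of $\Omega_{q_2}$ in $\Omega_{q_1}$ together with the continuity of the input--output map $G_{q_1}$ and the fact that $G_{q_1}$ restricted to $\Omega_{q_2}$ equals $G_{q_2}$; for $q_2\le q_1$ it approximates the target in $X^{Q,q_2}$ by elements of $X^{Q,q_1}$ and concludes by a diagonal argument. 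Some argument of this kind is needed to make your ``uniformly in $q$'' claim precise.
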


\begin{proof}[Proof\ \footnotemark.]
Equivalence between $L^2$ approximate controllability and Items~\ref{assumption1-Lq} and \ref{assumption2-Lq} follows from \cite[Corollary~4.10]{yamamoto1989reachability} after recalling that $P = B \delta_0$ and noting that $Q$ can be written as $Q = Q_0 + Q_1$, with $Q_0 = \delta_0 A_N$ and $\supp Q_1 \subset [-\Lambda_N, -\Lambda_N + \Lambda_{N-1}]$, which is bounded away from zero.

\footnotetext{This proof differs from the one provided in the published version of this paper, as the argument provided in the latter was incomplete.}

It was further shown in \cite[Theorem~4.4]{YamamotoRealization} that $L^2$ approximate controllability is equivalent to the existence of sequences of matrices $(R_n)_{n \in \mathbb N}$ and $(S_n)_{n \in \mathbb N}$ in $\mathcal E^\prime(\mathbb R_-)$ of appropriate sizes such that one has the approximate Bézout's identity
\begin{equation*}
\lim_{n \to +\infty} Q * R_n + P * S_n = \delta_0 I_d \qquad \text{ in } \mathcal D^\prime(\mathbb R).
\end{equation*}
The proof of this equivalence in \cite[Theorem~4.4]{YamamotoRealization} can be immediately generalized in order to also show equivalence between $L^q$ approximate controllability and the above approximate Bézout's identity, for any $q \in [1, +\infty)$. In particular, we deduce that $L^q$ and $L^2$ approximate controllabilities are equivalent, yielding the conclusion.
\end{proof}

We are finally in position to provide a proof of Theorem~\ref{main_result1}.

\begin{proof}[Proof of Theorem~\ref{main_result1}]
Theorem~\ref{lem:3} proves that $L^q$ approximate controllability is equivalent to the $L^q$ approximate controllability in time $T=d \Lambda_N$. The conclusion follows by combining Theorem~\ref{th_Yamamoto1} and Item~\ref{prop:lienHQ1} of Proposition~\ref{prop:lienQH}.
\end{proof}

\subsection{Exact controllability}
\label{subsec:Exact_contr_HY}

In order to take advantage of realization theory, one needs to expound and improve some of the exact controllability results obtained by Yamamoto.

\subsubsection{Bézout's identity characterization of Radon exact controllability}

As explained in the introduction, Yamamoto's realization approach   
tackles exact controllability in 
distributional sense. Thanks to the specific features of our difference delay system, we are able to replace the
general distributional framework of Yamamoto by the more structured setting of distributions of order zero (or, equivalently, Radon measures). More precisely,
we first define the  Radon measure space
\begin{equation*}
 \overline{X}^Q:=\left\{ \pi \Psi\suchthat\Psi \in  \left(M(\mathbb{R}_+)\right)^d \mbox{ and } \pi(Q*\pi \Psi)=0  \right\}.
 \end{equation*}
It is then straightforward to see that outputs of the input-output system defined in Equation~\eqref{representation_radon_measure3} corresponding to inputs in 
$\left(M(\mathbb{R}_-)\right)^{m}$ belong to $\overline{X}^Q$.
We next extend the definition of exact controllability to Radon measures as follows.

\begin{definition}
\label{def_reachability_distribution}
System~\eqref{syst_lin_avec_sortie} is \emph{Radon exactly controllable} if, for every $\pi \Psi \in \overline{X}^Q$ with $\Psi \in (M(\mathbb{R}_+))^d$, there exists $u \in \left(M(\mathbb{R}_-)\right)^m$ such that $\pi(A*u)=\pi \Psi$.
\end{definition}

In Proposition~\ref{prop_fond_exact_controllability} below, we give a characterization of Radon exact controllability through a Bézout identity over the ring of Radon measures. We then discuss the link between such a Bézout identity and $L^q$ exact controllability in Corollary~\ref{cor:distru_exac_equiv}. Let us start by summarizing Lemmas~4.3, A.2, and A.3 from \cite{YamamotoRealization} in the following statement.

\begin{lemma}
\label{lem:sumlemmasYamamoto}
The following assertions hold true:
\begin{enumerate}[i)]
\item \label{lem:4.3_Yamamoto}
Let $\Psi$ be an element in $ \mathcal{D}'_{+}(\mathbb{R})$ such that $\pi(Q*\pi \Psi)=0$. Then there
exists a sequence $\Psi_n \in X^{Q,2}$ such that $ \lim_{n\to\infty} \Psi_n = \pi \Psi$.
\item  \label{lem:A_2_Yamamoto}
For $ \alpha \in \mathcal{D}'(\mathbb{R})$, we have $\pi(\alpha)=0$ if and only if $\supp(\alpha) \subset (-\infty,0]$.
\item \label{lem:A_3_Yamamoto}
$\pi(\alpha * \pi \beta)=\pi(\alpha*\beta)$ for every $\alpha \in \mathcal{E}'(\mathbb{R}_{-})$, $\beta \in \mathcal{D}'_{+}(\mathbb{R})$.
\end{enumerate}
\end{lemma}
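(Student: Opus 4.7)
The three items reproduce, in our notation, Lemmas~4.3, A.2, and A.3 of \cite{YamamotoRealization}, so in the paper I would simply cite that reference. For the purpose of this plan, I would verify them in the order (ii), (iii), (i), since (ii) feeds into (iii) and both feed into (i).

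For (ii), I would unfold the definitions. A test function $\psi\in\mathcal{D}(\mathbb{R}_+)$ is smooth on $\mathbb{R}$ with compact support in $[0,+\infty)$, hence vanishes identically on $(-\infty,0)$ and, by smoothness at the boundary of its support, to infinite order at $0$. If $\supp\alpha\subset(-\infty,0]$, a localization argument (exploiting the local finite order of $\alpha$ near $0$ together with the vanishing of every derivative of $\psi$ at $0$) yields $\scalprod{\alpha,\psi}=0$, whence $\pi\alpha=0$. Conversely, $\pi\alpha=0$ in particular annihilates every test function with compact support in the open half-line $(0,+\infty)$, which is equivalent to $\supp\alpha\subset(-\infty,0]$ by definition of the support.

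For (iii), I would identify $\pi\beta$ with a suitable extension in $\mathcal{D}'_+(\mathbb{R})$ (namely the one obtained by restricting to test functions supported in $(0,+\infty)$ and extending by zero), so that $\gamma:=\beta-\pi\beta\in\mathcal{D}'_+(\mathbb{R})$ satisfies $\supp\gamma\subset(-\infty,0]$ by (ii). Since $\supp\alpha\subset\mathbb{R}_-$, the convolution $\alpha*\gamma$ is well defined in $\mathcal{D}'(\mathbb{R})$ and has support contained in $\supp\alpha+\supp\gamma\subset(-\infty,0]$. Applying (ii) once more yields $\pi(\alpha*\gamma)=0$, i.e.\ $\pi(\alpha*\beta)=\pi(\alpha*\pi\beta)$.

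For (i), the plan is a regularization argument: fix a sequence of smooth nonnegative mollifiers $(\rho_n)_{n\in\mathbb{N}}$ with $\int\rho_n=1$ and $\supp\rho_n\subset[-1/n,0]$, and set $\Psi_n:=\pi\Psi*\rho_n$, after identifying $\pi\Psi$ with its extension by zero in $\mathcal{D}'_+(\mathbb{R})$. Standard arguments show that $\Psi_n$ lies in $L^2_{\mathrm{loc}}(\mathbb{R}_+,\mathbb{R}^d)$ and that $\Psi_n\to\pi\Psi$ in the topology of $\mathcal{D}'(\mathbb{R}_+)$. The main obstacle is the verification that $\Psi_n\in X^{Q,2}$, i.e.\ that $\pi(Q*\Psi_n)=0$. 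Using the associativity of convolution together with (iii), one has
\[
\pi(Q*\Psi_n)=\pi\bigl((Q*\pi\Psi)*\rho_n\bigr),
\]
while the hypothesis $\pi(Q*\pi\Psi)=0$ combined with (ii) forces $\supp(Q*\pi\Psi)\subset(-\infty,0]$; the choice $\supp\rho_n\subset\mathbb{R}_-$ then places the support of $(Q*\pi\Psi)*\rho_n$ in $(-\infty,0]$ as well, and a final application of (ii) concludes. The careful placement of the mollifier support in $\mathbb{R}_-$ is the technical point where one must not be sloppy.
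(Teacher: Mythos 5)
Your proposal is correct, but note that the paper does not actually prove this lemma: it is introduced explicitly as a summary of Lemmas~4.3, A.2, and A.3 of \cite{YamamotoRealization}, so the paper's ``proof'' is the citation you yourself say you would give. Your self-contained sketches are nevertheless sound and in the expected spirit: item~\ref{lem:A_2_Yamamoto} follows from the fact that every $\psi\in\mathcal D(\mathbb R_+)$ vanishes to infinite order on $(-\infty,0]$ (plus local finiteness of the order of $\alpha$ near the compact support of $\psi$), item~\ref{lem:A_3_Yamamoto} from the support inclusion $\supp(\alpha*\gamma)\subset\supp\alpha+\supp\gamma\subset(-\infty,0]$, and item~\ref{lem:4.3_Yamamoto} from mollification with kernels supported in $\mathbb R_-$, which is indeed the technically decisive choice. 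The one phrase I would tighten is the identification of $\pi\beta$ with ``the extension by zero'' in $\mathcal D'_+(\mathbb R)$: for a general distribution there is no canonical extension of the restriction to $(0,+\infty)$, so you should instead pick \emph{any} representative $\gamma\in\mathcal D'_+(\mathbb R)$ with $\pi\gamma=\pi\beta$ (for instance $\beta$ itself) and observe that, precisely by item~\ref{lem:A_2_Yamamoto}, the conclusion is independent of that choice since two representatives differ by a distribution supported in $(-\infty,0]$. This is a presentational point rather than a gap; with it fixed, the same remark also cleans up the use of $\pi\Psi$ in your proof of item~\ref{lem:4.3_Yamamoto}, where one should first pass from $Q*\pi\Psi$ to $Q*\Psi$ via item~\ref{lem:A_3_Yamamoto} before invoking the support argument.
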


\begin{proposition}
\label{prop_fond_exact_controllability}
System~\eqref{syst_lin_avec_sortie} is Radon exactly controllable if and only if there exist  two matrices $R$ and $S$ with entries in $M(\mathbb{R}_{-})$ such that
\begin{equation}
\label{eq:exact_cont0}
Q*R+P*S=\delta_0 I_d.
\end{equation}
Moreover, if \eqref{eq:exact_cont0} holds true, then,
for every target output $\pi \Psi \in \overline{X}^Q$ with $\Psi \in (M(\mathbb{R}_+))^d$, the input $\omega \in M_+(\mathbb R)$ given by
\begin{equation}\label{eq:motion-planning}
\omega = S * Q * \Psi
\end{equation}
steers the origin to the state $\pi \Psi$
along the system \eqref{syst_lin_avec_sortie}, i.e.,
\begin{equation}\label{eq:sol-con-B}
    \pi(A * \omega) = \pi \Psi.
\end{equation}
\end{proposition}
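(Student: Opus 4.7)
The plan is to prove the equivalence of Bézout's identity with Radon exact controllability along with the motion-planning formula~\eqref{eq:motion-planning}--\eqref{eq:sol-con-B} by separate arguments for each implication.

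For the sufficiency direction (together with the motion-planning statement), I assume Bézout's identity and a target $\pi \Psi \in \overline{X}^Q$ with $\Psi \in (M(\mathbb{R}_+))^d$, and set $\omega := S * Q * \Psi$. Applying Lemma~\ref{lem:sumlemmasYamamoto}\ref{lem:A_3_Yamamoto} to $\pi(Q * \pi \Psi) = 0$ gives $\pi(Q * \Psi) = 0$, so by Lemma~\ref{lem:sumlemmasYamamoto}\ref{lem:A_2_Yamamoto} the distribution $Q * \Psi$ has support in $(-\infty, 0]$; combined with its natural support in $[-\Lambda_N, +\infty)$, it is compactly supported in $[-\Lambda_N, 0]$. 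Convolving by $S$, which is compactly supported in $\mathbb{R}_-$, shows that $\omega \in (M(\mathbb{R}_-))^m \subset (M_+(\mathbb{R}))^m$. Substituting $P * S = \delta_0 I_d - Q * R$ from Bézout and using $A = Q^{-1} * P$, I compute
\[
A * \omega = Q^{-1} * (P * S) * Q * \Psi = \Psi - R * Q * \Psi.
\]
Applying $\pi$ and Lemma~\ref{lem:sumlemmasYamamoto}\ref{lem:A_3_Yamamoto} with $\alpha = R$ and $\beta = Q * \Psi$ gives $\pi(R * Q * \Psi) = \pi(R * \pi(Q * \Psi)) = 0$, so $\pi(A * \omega) = \pi \Psi$, establishing both Radon exact controllability and identity~\eqref{eq:sol-con-B}.

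For the converse, my strategy is to construct $S \in (M(\mathbb{R}_-))^{m \times d}$ column by column so that $R := Q^{-1} - A * S$ lies in $(M(\mathbb{R}_-))^{d \times d}$; direct convolution then yields Bézout's identity:
\[
Q * R = Q * Q^{-1} - Q * A * S = \delta_0 I_d - P * S.
\]
To produce $S$, I would apply Radon exact controllability to compactly supported auxiliary targets whose free-difference-equation extensions match the columns of $Q^{-1}$ on suitable positive-time intervals, assembling the resulting inputs $u^{(i)} \in (M(\mathbb{R}_-))^m$ into the columns of $S$. The resulting $R$ has support in $\mathbb{R}_-$ by Lemma~\ref{lem:sumlemmasYamamoto}\ref{lem:A_2_Yamamoto}, and the bounds $\supp Q^{-1} \subset [\Lambda_N, +\infty)$ and $\supp(A * S) \subset [-\Lambda_N, +\infty)$ confine $\supp R$ to $[-\Lambda_N, 0]$. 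The order-zero property of $R$ is inherited from $Q^{-1}$ and $A * S = Q^{-1} * B * S$ being Radon measures, since convolving a Radon measure with a compactly supported one preserves the Radon-measure property.

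The main obstacle lies in the necessity step: since the columns of $Q^{-1}$ are not compactly supported in $\mathbb{R}_+$, they do not directly qualify as targets in Definition~\ref{def_reachability_distribution}, and one must replace them by compactly supported surrogates. The plan is to exploit the fact that elements of $\overline{X}^Q$ obey the free difference dynamics inherited from $Q$ in order to build such surrogates whose reaching inputs assemble into an $S$ producing the required identity on the positive half-line. Once this technical construction is in place, the regularity analysis of $R = Q^{-1} - A * S$ (compact support in $\mathbb{R}_-$ and order zero) follows immediately from the observations listed above.
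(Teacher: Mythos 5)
Your sufficiency half (Bézout's identity implies Radon exact controllability, together with the motion-planning formula \eqref{eq:motion-planning}--\eqref{eq:sol-con-B}) is correct and essentially identical to the paper's argument; the only cosmetic difference is that you localize $\supp(Q*\Psi)$ inside $[-\Lambda_N,0]$ to conclude $\omega\in\left(M(\mathbb{R}_-)\right)^m$, whereas the paper shows $\pi(\omega)=0$ and invokes Lemma~\ref{lem:sumlemmasYamamoto}\ref{lem:A_2_Yamamoto}. Both routes are fine.

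The necessity direction, however, contains a genuine gap that you yourself flag: the construction of $S$ is only described as a plan (``I would apply Radon exact controllability to compactly supported auxiliary targets\dots'', ``once this technical construction is in place\dots'') and is never carried out. That construction is the entire content of the necessity proof, so the argument is incomplete as it stands. Moreover, the obstacle you perceive --- that the columns of $Q^{-1}$ are not compactly supported and hence cannot serve as targets --- is not present in the paper's proof: the columns of $Q^{-1}$ lie in $M_+(\mathbb{R})$ with support in $[\Lambda_N,+\infty)$, satisfy $\pi(Q^{-1})=Q^{-1}$ and $\pi\bigl(Q*\pi(Q^{-1})\bigr)=\pi(\delta_0 I_d)=0$, and are taken \emph{directly} as the targets $\pi\Psi$ in Definition~\ref{def_reachability_distribution}. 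Radon exact controllability then yields an $m\times d$ matrix $S$ with entries in $M(\mathbb{R}_-)$ such that $\pi(Q^{-1}*P*S)=\pi(Q^{-1})$, and your own formula $R:=Q^{-1}-A*S$ (which is the paper's $R:=Q^{-1}-Q^{-1}*P*S$) then has $\pi(R)=0$, hence support in $(-\infty,0]$ by Lemma~\ref{lem:sumlemmasYamamoto}\ref{lem:A_2_Yamamoto}, which combined with boundedness on the left gives compact support in $\mathbb{R}_-$; order zero is inherited from $Q^{-1}$, $P$, and $S$, and convolving on the left by $Q$ yields \eqref{eq:exact_cont0}. In short: no surrogate targets are needed, and the detour you sketch should be replaced by a direct application of the controllability hypothesis to the columns of $Q^{-1}$.
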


\begin{proof}
We first prove that \eqref{eq:exact_cont0} is a necessary condition for Radon exact controllability of \eqref{syst_lin_avec_sortie}. The entries of $Q^{-1}$ are in the space $M_{+}(\mathbb{R})$ and we have $\pi (Q^{-1})=Q^{-1}$ so that the columns of $Q^{-1}$ are in $\overline{X}^Q$. Since System~\eqref{syst_lin_avec_sortie} is Radon exactly controllable, we can take $\Psi$ equal to each column of $Q^{-1}$ in Definition~\ref{def_reachability_distribution} and thus deduce the existence of a matrix $S$ of size $m \times d$ and with entries in $M(\mathbb{R}_-)$ such that
\begin{equation}
\label{eq:bezout20}
\pi(Q^{-1}*P*S)
=\pi \left(Q^{-1}\right).
\end{equation}
We define
\begin{equation}
\label{eq:bezout30}
R:= Q^{-1}-Q^{-1}*P*S.
\end{equation}
From Equation~\eqref{eq:bezout20}, we deduce that $\pi(R)=0$ and  Item~\ref{lem:A_2_Yamamoto} of Lemma~\ref{lem:sumlemmasYamamoto} proves that the support of $R$ is included in 
$\mathbb{R}_-$. Since the supports of $Q^{-1}$, $P$, and $S$ are bounded on the left, the same is true for the support of $R$, which is, therefore, compact. Moreover the entries of $R$ are distributions of order zero because the same  is true for $Q^{-1}$, $P$, and $S$. Hence the entries of $R$ belong to $M(\mathbb R_-)$, and we finally obtain \eqref{eq:exact_cont0} by taking the convolution product of \eqref{eq:bezout30} on the left by $Q$.

We now prove that Condition~\eqref{eq:exact_cont0} is sufficient for Radon exact controllability of \eqref{syst_lin_avec_sortie}. Let $R$ and $S$ be two  matrices with entries in the space $M(\mathbb{R}_{-})$ satisfying Equation~\eqref{eq:exact_cont0}.
Let $\Psi \in \left(M(\mathbb{R}_+)\right)^d$ be such that
\begin{equation}\label{eq:psiXQ}
\pi(Q*\pi\Psi)=0,
\end{equation}
so that 
$\pi\Psi \in \overline{X}^Q$. 
Set $\omega:=S*Q*\Psi$. Since $Q$ and $S$ have entries in $M(\mathbb{R_-})$, we have that $\omega $ belongs to the space $M_+(\mathbb{R})$. Item~\ref{lem:A_3_Yamamoto} of Lemma~\ref{lem:sumlemmasYamamoto} implies that 
\begin{equation*}
\pi\left( \omega\right)= \pi\left(S*\pi\left(Q*  \pi\Psi\right)\right)=0,
\end{equation*}
and we deduce that $\omega$ is in $\left(M(\mathbb{R}_-)\right)^{m}$ by Item~\ref{lem:A_2_Yamamoto} of Lemma~\ref{lem:sumlemmasYamamoto}. By definition of $\omega$ and Equation~\eqref{eq:exact_cont0}, we also have
\begin{align}
\label{eq:bezout5}
Q^{-1}*P*\omega+R*Q*\Psi&=
Q^{-1}*(P*S+Q*R)*Q*\Psi=
\Psi.
\end{align}
We deduce from Equation~\eqref{eq:bezout5}, Condition~\ref{lem:A_3_Yamamoto} of Lemma~\ref{lem:sumlemmasYamamoto}, and 
\eqref{representation_radon_measure3}
that
\begin{align*}
\pi \Psi & = \pi\left(Q^{-1}*P*\omega\right)+\pi\left(R*Q*\Psi\right)\\
& =\pi\left(A*\omega\right)+\pi\left(R*\pi\left(Q*\pi \Psi \right)\right)\\
& = \pi\left(A*\omega\right),
\end{align*} 
where the last equality follows from \eqref{eq:psiXQ}.
We have shown that $\pi \Psi$ is the output corresponding to $\omega$ for the input-output map~\eqref{representation_radon_measure3}, proving the Radon exact controllability of \eqref{syst_lin_avec_sortie}.
\end{proof}

\begin{remark}
Proposition~\ref{prop_fond_exact_controllability} can be seen as a Radon counterpart of the distributional Bézout identity characterization of exact distributional controllability provided in \cite{Yamamoto_Willems}.
\end{remark}

As a consequence of Proposition~\ref{prop_fond_exact_controllability}, we can now deduce that Radon exact controllability implies $L^q$ exact controllability.

\begin{corollary}
\label{cor:distru_exac_equiv}
If System~\eqref{syst_lin_avec_sortie} is Radon exactly controllable then it is $L^q$ exactly controllable for every $q \in [1,+\infty)$.
\end{corollary}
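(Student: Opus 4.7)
The plan is to combine the explicit motion-planning formula from Proposition~\ref{prop_fond_exact_controllability} with the characterization of $L^q$ exact controllability given by Proposition~\ref{def_reachability}. Fix $\phi \in X^{Q,q}$ and let $\tilde\phi \in \mathcal{D}'_+(\mathbb{R})$ denote the extension of $\phi$ by zero on $\mathbb{R}_-$, so that the defining relation of $X^{Q,q}$ reads $\pi(Q * \tilde\phi) = 0$. Letting $R, S$ be the matrices with entries in $M(\mathbb{R}_-)$ provided by Proposition~\ref{prop_fond_exact_controllability}, I would introduce the candidate control $u := S * (Q * \tilde\phi)$ and aim to show that $u \in \Omega_q$ and $\pi(A * u) = \phi$; by Proposition~\ref{def_reachability}, this will yield $L^q$ exact controllability.

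The first task---showing $u \in \Omega_q$---is where the specific structure of difference delay equations is exploited. Although $\tilde\phi$ itself is not compactly supported, the intermediate product $Q * \tilde\phi$ is: since $\supp(\tilde\phi) \subset \mathbb{R}_+$ and $\supp(Q) \subset [-\Lambda_N, 0]$, one has $\supp(Q * \tilde\phi) \subset [-\Lambda_N, +\infty)$, and the condition $\pi(Q * \tilde\phi) = 0$ restricts the support further to $[-\Lambda_N, 0]$. Since $Q$ is a finite linear combination of Diracs, $Q * \tilde\phi$ is then an $L^q$ function compactly supported in $[-\Lambda_N, 0]$. Convolving the compactly supported Radon measure $S$ with this $L^q$ function yields, via Young's inequality, an $L^q$ function $u$ with compact support contained in $\supp(S) + [-\Lambda_N, 0] \subset \mathbb{R}_-$, hence $u \in \Omega_q$.

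For the output identity, I would convolve the B\'ezout identity $Q * R + P * S = \delta_0 I_d$ on the left by $Q^{-1}$ to get $A * S = Q^{-1} - R$ (recalling $A = Q^{-1} * P$), and then compute
\[
A * u = (A * S) * Q * \tilde\phi = (Q^{-1} - R) * Q * \tilde\phi = \tilde\phi - R * Q * \tilde\phi.
\]
Applying $\pi$, and invoking Item~\ref{lem:A_3_Yamamoto} of Lemma~\ref{lem:sumlemmasYamamoto} with $\alpha = R \in \mathcal{E}'(\mathbb{R}_-)$ and $\beta = Q * \tilde\phi \in \mathcal{D}'_+(\mathbb{R})$, one obtains $\pi(R * Q * \tilde\phi) = \pi(R * \pi(Q * \tilde\phi)) = 0$. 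Thus $\pi(A * u) = \pi\tilde\phi = \phi$, as required. The main conceptual point---and the only mild difficulty---is recognizing that the defining relation $\pi(Q * \tilde\phi) = 0$ of the state space $X^{Q,q}$ is precisely what compactifies $Q * \tilde\phi$ and allows the Radon-level B\'ezout identity to produce an $L^q$ control; without this, the formula $S * Q * \tilde\phi$ would be a convolution of a compactly supported measure with a function on $\mathbb{R}_+$ of unbounded support, and $u$ could fail to lie in $\Omega_q$.
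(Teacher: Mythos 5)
Your proposal is correct and follows essentially the same route as the paper: the paper's proof also takes the B\'ezout pair $(R,S)$ from Proposition~\ref{prop_fond_exact_controllability}, extends $y\in X^{Q,q}$ by zero to $\tilde y$, sets $\omega=S*Q*\tilde y$ as in \eqref{eq:motion-planning}, and concludes $y=\pi(A*\omega)$ from \eqref{eq:sol-con-B}. You have merely made explicit the two points the paper leaves implicit, namely the support/regularity argument showing $\omega\in\Omega_q$ (via $\supp(Q*\tilde y)\subset[-\Lambda_N,0]$) and the cancellation $\pi(R*Q*\tilde y)=0$ via Item~\ref{lem:A_3_Yamamoto} of Lemma~\ref{lem:sumlemmasYamamoto}, both of which are exactly the computations carried out in the proof of Proposition~\ref{prop_fond_exact_controllability}.
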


\begin{proof}
Let $R$ and $S$ be two  matrices with entries belonging to the space $M(\mathbb{R}_{-})$ satisfying Equation~\eqref{eq:exact_cont0}. Let $q \in [1,+\infty)$. Let $y \in  X^{Q,\,q}$ and denote by $\tilde{y}$ the extension of $y$ on $(-\infty,+\infty)$ by setting $\tilde{y}$ equal to zero on $(-\infty,0)$. Set $\omega=S*Q*\tilde{y}$ as in \eqref{eq:motion-planning}. Then $\omega$ is in $\Omega_q$ and it follows from \eqref{eq:sol-con-B} that $y=\pi\left(A*\omega\right)$, which proves $L^q$ exact controllability thanks to Proposition~\ref{def_reachability}.
\end{proof}

A challenging question is to investigate a converse to the previous corollary, i.e., whether $L^q$ exact controllability for some $q \in [1,+\infty)$ implies Radon exact controllability or not. We bring a positive answer to that question for $q=1$ in the following section.

\subsubsection{Bézout's identity characterization of \texorpdfstring{$L^1$}{L1} exact controllability}

We next provide a sufficient and necessary condition for System~\eqref{syst_lin_avec_sortie} to be $L^1$ exactly controllable.

\begin{theorem}
\label{prop_fond_exact_controllability_L1}
System~\eqref{syst_lin_avec_sortie} is $L^1$ exactly controllable if and only if there exist  two matrices $R$ and $S$ with entries in $M(\mathbb{R}_{-})$ such that \eqref{eq:exact_cont0} holds true.
\end{theorem}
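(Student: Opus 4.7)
The sufficiency direction is immediate: if matrices $R, S$ with entries in $M(\mathbb{R}_-)$ satisfy \eqref{eq:exact_cont0}, then Proposition~\ref{prop_fond_exact_controllability} yields Radon exact controllability of System~\eqref{syst_lin_avec_sortie}, hence $L^1$ exact controllability by Corollary~\ref{cor:distru_exac_equiv}.

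For the converse, the plan is to upgrade $L^1$ exact controllability to Radon exact controllability in the sense of Definition~\ref{def_reachability_distribution}, from which \eqref{eq:exact_cont0} will follow by Proposition~\ref{prop_fond_exact_controllability}. The starting point is that, by Theorem~\ref{lem:3}, $L^1$ exact controllability holds in the fixed finite time $d\Lambda_N$, so the endpoint map $E_1(d\Lambda_N)$ is a surjective bounded operator between Banach spaces. The open mapping theorem then supplies a constant $C>0$ such that every $\phi \in L^1([-\Lambda_N,0],\mathbb{R}^d)$ is reached by some control $u \in L^1([0,d\Lambda_N],\mathbb{R}^m)$ with $\|u\|_1 \le C\|\phi\|_1$. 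Under the standard time-translation identifying System~\eqref{system_lin_formel2} with System~\eqref{syst_lin_avec_sortie}, this bound transfers to the input-output map $u \mapsto \pi(A*u)$, with inputs supported in $[-d\Lambda_N,0]$.

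Given a target $\pi\Psi \in \overline{X}^Q$ with $\Psi \in (M(\mathbb{R}_+))^d$---in particular, the columns of $Q^{-1}$, which are what is needed to invoke Proposition~\ref{prop_fond_exact_controllability}---I would view $\pi\Psi$ through its restriction to $[0,\Lambda_N]$ as a finite vector Radon measure of total variation at most $\|\pi\Psi\|_{TV}$, and approximate it weakly-$*$ in $M([0,\Lambda_N],\mathbb{R}^d)$ by a sequence of $L^1$ functions $\psi_n$ with $\|\psi_n\|_1 \le \|\pi\Psi\|_{TV}$, for instance via convolution with a smooth approximate identity. The open mapping bound then produces controls $u_n$ supported in $[-d\Lambda_N,0]$ with $\|u_n\|_1$ uniformly bounded, and Banach--Alaoglu yields a subsequence converging weakly-$*$ to some $u \in M(\mathbb{R}_-)^m$ supported in $[-d\Lambda_N,0]$.

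The key obstacle, and the crux of the argument, is to verify that this weak-$*$ limit still produces the target, i.e., that the map $u \mapsto \pi(A*u)$ is sequentially weakly-$*$ continuous on measures sharing a common compact support. I expect to derive this from the duality identity $\langle A*u_n,\varphi\rangle = \langle u_n,\check A * \varphi\rangle$ for $\varphi \in \mathcal{D}(\mathbb{R})$ combined with the fact that, since $A = Q^{-1}*P$ is a locally finite Radon measure, the function $t \mapsto \int \varphi(s+t)\,dA(s)$ is continuous and bounded on the common support of the $u_n$; this yields distributional convergence $A*u_n \to A*u$. Combined with $\pi(A*u_n) = \psi_n \to \pi\Psi$ weakly-$*$ in $M$, uniqueness of limits then gives $\pi(A*u) = \pi\Psi$. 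Finally, assembling the $d$ resulting measure-valued controls as columns of a matrix $S \in M(\mathbb{R}_-)^{m\times d}$ and setting $R := Q^{-1} - Q^{-1}*P*S$, the same argument as in the proof of Proposition~\ref{prop_fond_exact_controllability} shows that $R \in M(\mathbb{R}_-)^{d\times d}$, and convolving on the left by $Q$ yields \eqref{eq:exact_cont0}.
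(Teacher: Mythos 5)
Your proposal is correct and follows essentially the same route as the paper's proof: reduction to the fixed time $d\Lambda_N$ via saturation of the endpoint map, the open mapping theorem to obtain a uniform bound on controls, approximation of the measure-valued target (in particular the columns of $Q^{-1}$) by $L^1$ functions with uniformly bounded $L^1$ norms, weak-$*$ compactness of Radon measures to extract a limit control in $M(\mathbb{R}_-)$, passage to the limit in the input--output relation, and finally the definition $R := Q^{-1} - Q^{-1}*P*S$. The only variation is in how the uniformly $L^1$-bounded approximating sequence is produced: you mollify the target and read the bound off its total variation, whereas the paper obtains the approximating sequence from Item~\ref{lem:4.3_Yamamoto} of Lemma~\ref{lem:sumlemmasYamamoto} and then derives the uniform $L^1$ bound from the weak-$*$ convergence via the Banach--Steinhaus theorem.
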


\begin{proof}
We first notice that, by Proposition~\ref{prop_fond_exact_controllability} and Corollary~\ref{cor:distru_exac_equiv}, Condition~\eqref{eq:exact_cont0} implies that System~\eqref{syst_lin_avec_sortie} is $L^1$ exactly controllable.

Let us now prove the converse implication. We proceed in four steps.

\begin{enumerate}[label={\textbf{Step~\arabic*.}}, ref={Step~\arabic*}, itemindent=*, leftmargin=0pt]
\item Let us define the map $\widetilde{G}:\, \widetilde{\Omega}_1 \longrightarrow L^1([0,\Lambda_N],\mathbb{R}^d)$ by
\[\widetilde{G}(u)(t)=\pi(Q^{-1}*P*u)(t),\qquad t\in [0,\Lambda_N],\;u \in \widetilde{\Omega}_1,\]
where $\widetilde{\Omega}_1$ denotes the subspace of $\Omega_1$ made of inputs with compact support in $[-d \Lambda_N,0]$, endowed with the norm $\|.\|_{[-d\Lambda_N,0],1}$. Firstly, we can see that the map $\widetilde{G}$ is a bounded operator because $Q^{-1}*P$ is a distribution with a finite number of Dirac distributions on each compact interval of $\mathbb{R}$. We deduce that $\widetilde{G}$ is a continuous linear map. Secondly, the saturation of the endpoint map (Theorem~\ref{lem:RanE_con}) allows us to state that System~\eqref{syst_lin_avec_sortie} is $L^1$ exactly controllable if and only if the map $\widetilde{G}$ is surjective. We can now apply 
the open mapping theorem (see, e.g, \cite[Theorem~4.13]{rudin1991functional}) and deduce that there exists $\delta>0$ such that 
\begin{equation}
\label{eq:open_map_thereom}
\widetilde{G}(U) \supset \delta V,
\end{equation}
where $U$ and $V$ are the open unit balls of $ \widetilde{\Omega}_1$ and $L^1([0,\Lambda_N],\mathbb{R}^d)$ respectively. 

\item\label{bezout-L1-step2} Since $\pi(Q*\pi (Q^{-1}))=\pi(\delta_0)=0$ and the inclusion $X^{Q,2} \subset X^{Q,1}$ holds, Item~\ref{lem:4.3_Yamamoto} of Lemma~\ref{lem:sumlemmasYamamoto} implies that there exists a sequence $\psi_n=(\psi_{n,1},\dots,\psi_{n,d}) \in (X^{Q,1})^d$,  $n \in \mathbb{N}$, such that $ \psi_n \rightarrow \pi (Q^{-1})$ in the distributional sense as $n\to\infty$. In other words, for $i,j \in \{1,\dots,d\}$, if we define the Radon measures $\left(Q^{-1}_{n}\right)_{i,j}(t)=\int_0^t \left(\psi_{n}\right)_{i,j}(x)dx$ for $t \in \mathbb{R}_+$ and $n \in \mathbb{N}$, we get that $\left(Q^{-1}_n\right)_{i,j}$ weak-star converges to $\left(\pi(Q^{-1})\right)_{i,j}$ in the sense of \cite[Paragraph 4.3]{maggi2012sets}. By \cite[Remark 4.35]{maggi2012sets} and the Banach--Steinhaus theorem, we obtain that the total variation 
\begin{equation*}
\underset{n \in \mathbb{N}}{\sup}\abs*{\left(Q^{-1}_n\right)_{i,j}}([0,\Lambda_N])<\infty,
\end{equation*}
where 
\begin{equation*}
\abs*{\left(Q^{-1}_n\right)_{i,j}}([0,\Lambda_N]):=\sup_{\substack{\varphi \in C([0, \Lambda_N]) \\ \vertii{\varphi}_{C([0, \Lambda_N])} \leq 1}} \abs*{\int_0^{\Lambda_N}  \varphi(t) d \left(Q^{-1}_n\right)_{i,j}(t)},
\end{equation*}
with $C([0, \Lambda_N])$ the space of the continuous functions defined on the interval $[0, \Lambda_N]$ with values in $\mathbb{R}$ endowed with its natural norm $\vertii{\cdot}_{C([0, \Lambda_N])}$ By the Riesz representation theorem, see \cite[Theorem~6.19]{Rudin1987Real}, we have that $|\left(Q^{-1}_n\right)_{i,j}|([0,\Lambda_N])$ denotes the total variation of the measure on the interval $[0,\Lambda_N]$ of $\left(Q^{-1}_n\right)_{i,j}$ in the sense of \cite[Chapter~6, Equation~(3)]{Rudin1987Real}. In particular, we have that 
\begin{equation*}
\abs*{\left(Q^{-1}_n\right)_{i,j}}([0,\Lambda_N])= \int_0^{\Lambda_N} \abs*{\left(\psi_n\right)_{i,j}(t)} dt,
\end{equation*}
so that each column of $\left(\psi_{n}\right)_{n \in \mathbb{N}} $  is uniformly bounded in $L^1([0,\Lambda_N],\mathbb{R}^d)$, that is,  there exists $C>0$ such that
\begin{equation*}
\|\psi_{n,j}\|_{L^1([0,\Lambda_N],\mathbb{R}^d)} \le C,\qquad \forall j \in \llbracket 1,d\rrbracket,\ \forall n \in \mathbb{N}.
\end{equation*}
Let $M'>0$ be such that $\delta M' >C$. We get from Equation~\eqref{eq:open_map_thereom} that
\begin{equation*}
\widetilde{G}(M'U) \supset \delta M'V
\end{equation*}
so that, for all $\psi_{n,j}$ with  $j \in \llbracket 1,d\rrbracket$ and $n \in \mathbb{N}$, there exists $S_{n,j} \in \widetilde{\Omega}_1$ such that
\begin{equation*}
\widetilde{G}(S_{n,j})= \psi_{n,j} \quad \mbox{and} \quad  \|S_{n,j}\|_{[-d\Lambda_N,0],\,1} \le M'.
\end{equation*}

\item We define $S_n=(S_{n,1},\dots,S_{n,d})$. By construction, $S_n \in {\widetilde{\Omega}_1}^d$ and 
\begin{equation}
\label{eq:bezout1}
\pi(Q^{-1}*P*S_n) \rightarrow \pi(Q^{-1}), \qquad \mbox{as $n\to\infty$},
\end{equation}
in a distributional sense. Since the columns of $S_n$, for $n \in\mathbb{N}$, are uniformly bounded for the norm in $\widetilde{\Omega}_1$, by the weak compactness of Radon measures (see for instance \cite[Theorem 4.33]{maggi2012sets}), there exist a matrix $S$ with entries in $M(\mathbb{R}_-)$ and a subsequence of $\left( S_n \right)_{n \in \mathbb{N}}$ (still denoted $\left(S_n\right)_{n \in \mathbb{N}}$ by abuse of notation) such that $\lim_{n \rightarrow + \infty}S_n = S$ in distributional sense. Since the convolution is continuous in distributional sense (see, for instance, \cite[Theorem 7.4.9]{bony2001cours}), and $\pi$ is continuous with respect to the strong dual topology, we deduce from Equation~\eqref{eq:bezout1} that
\begin{equation}
\label{eq:bezout2}
\pi(Q^{-1}*P*S)
=\pi \left(Q^{-1}\right).
\end{equation}
\item  We define
\begin{equation}
\label{eq:bezout3}
R:= Q^{-1}-Q^{-1}*P*S
\end{equation}
and we conclude the proof as in Proposition~\ref{prop_fond_exact_controllability}: 
by Equation~\eqref{eq:bezout2} we have $\pi(R)=0$, which, together with  Condition~\ref{lem:A_2_Yamamoto} 
of Lemma~\ref{lem:sumlemmasYamamoto}, implies that the support of $R$ is compact and contained in $(-\infty,0]$. Moreover, the entries of $R$ are distributions of order zero because the same  is true for $Q^{-1}$, $P$, and $S$. Bézout's identity is then obtained by multiplying Equation~\eqref{eq:bezout3} on the left by $Q$ in convolution sense. \qedhere
\end{enumerate}
\end{proof}

\begin{remark}
\label{remark1}
It is an open question whether, for each $q \in [1,+\infty)$, the Bézout identity is equivalent to the $L^q$ controllability. We can stress that the proof given for the case $q=1$ does not straightforwardly extend to $q > 1$. Indeed, in \ref{bezout-L1-step2}, the convergence $\psi_n \to \pi(Q^{-1})$ in the distributional sense implies the boundedness of $(\psi_n)_{n \in \mathbb N}$ in $L^1([0, \Lambda_N], \mathbb R^d)$, but such a sequence may fail to be bounded in $L^q([0, \Lambda_N], \mathbb R^d)$ for $q > 1$. 
\end{remark}

An immediate consequence of the Bézout identity characterization of the Radon and $L^1$ exact controllability is the following corollary.

\begin{corollary}
\label{corollaire_L^1_equiv_radon}
System~\eqref{syst_lin_avec_sortie} is Radon exactly controllable if and only if it is $L^1$ exactly controllable.
\end{corollary}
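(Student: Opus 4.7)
The plan is to observe that the conclusion is immediate by combining the two Bézout identity characterizations already established in the excerpt. Specifically, Proposition~\ref{prop_fond_exact_controllability} shows that Radon exact controllability of System~\eqref{syst_lin_avec_sortie} is equivalent to the existence of matrices $R, S$ with entries in $M(\mathbb R_-)$ satisfying $Q * R + P * S = \delta_0 I_d$, while Theorem~\ref{prop_fond_exact_controllability_L1} shows that $L^1$ exact controllability is equivalent to that same Bézout identity. Hence both notions are equivalent to each other through a common intermediate condition, which is all we need to state.

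Concretely, I would write the proof in two short implications. For the forward direction, Radon exact controllability implies, via Proposition~\ref{prop_fond_exact_controllability}, that \eqref{eq:exact_cont0} holds; then Theorem~\ref{prop_fond_exact_controllability_L1} yields $L^1$ exact controllability. For the reverse direction, $L^1$ exact controllability implies \eqref{eq:exact_cont0} by the ``only if'' part of Theorem~\ref{prop_fond_exact_controllability_L1}, and then the ``if'' part of Proposition~\ref{prop_fond_exact_controllability} produces Radon exact controllability. Alternatively, one can simply invoke Corollary~\ref{cor:distru_exac_equiv} for the forward implication (which is already derived from Proposition~\ref{prop_fond_exact_controllability}) and the ``only if'' part of Theorem~\ref{prop_fond_exact_controllability_L1} followed by Proposition~\ref{prop_fond_exact_controllability} for the reverse direction.

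There is essentially no obstacle here: all the substantive work, in particular the delicate weak-$\ast$ compactness argument based on the open mapping theorem in the proof of Theorem~\ref{prop_fond_exact_controllability_L1}, has already been carried out. The corollary is simply a packaging of the fact that two a priori different functional notions of exact controllability admit the same algebraic characterization over the Radon measure algebra, so the proof reduces to chaining the two equivalences.
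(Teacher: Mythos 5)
Your proposal is correct and matches the paper's intended argument: the paper presents this corollary as an immediate consequence of Proposition~\ref{prop_fond_exact_controllability} and Theorem~\ref{prop_fond_exact_controllability_L1}, both of which characterize the respective controllability notions by the same Bézout identity \eqref{eq:exact_cont0}. Chaining the two equivalences, exactly as you do, is all that is required.
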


The open question raised in Remark~\ref{remark1} is tantamount to know if, for difference delay systems of the type~\eqref{system_lin_formel2}, the $L^q$ exact controllability for some $q \in [1,+\infty)$ is equivalent to the $L^q$ exact controllability  for every $q \in [1,+\infty)$. It has to be pointed out that such a property holds when dealing with the exponential stability of such systems, as noticed in  \cite{baratchart,Chitour2016Stability}.

\subsubsection{Solvability of Bézout's identity over Radon measure spaces and proof of Theorem~\ref{main_result2}}

Proposition~\ref{prop_fond_exact_controllability} and Theorem~\ref{prop_fond_exact_controllability_L1}  reduced the exact controllability problem to the 
problem of existence of solutions of a suitable Bézout identity. 

\begin{definition}
We say that \emph{the Bézout identity \eqref{eq:exact_cont0}
is solvable} if there exist two matrices $R$ and $S$ with entries in $M(\mathbb{R}_{-})$ that
satisfy it. 
\end{definition}

We next show that the solvability of the Bézout identity is equivalent to a corona problem, the latter having led a tremendous literature on the subject for some alike problems. We can cite for example the celebrated paper \cite{carleson1962interpolations} resolving the corona problem in one dimension for holomorphic bounded functions in the unit disk. However the present corona problem arising from the Bézout identity over a Radon measure algebra has not received a great attention and it is still an open question.
To the best of our knowledge, the closest result dealing with the solvability of a Bézout identity is that provided in  \cite[Theorem~5.1]{YAMAMOTO_Multi_Ring_2011} over the algebra of distributions, which gives a sufficient condition on $(Q,P)$ for such a solvability. In turn, solving a Bézout identity over Radon measures is fundamentally different than over distributions because of the different topologies endowing these two spaces.

Our first result for Radon measures is the following necessary condition for the solvability of the Bézout identity.

\begin{proposition}
\label{prop_nece_bezout}
A necessary condition for the Bézout identity \eqref{eq:exact_cont0} to be solvable is 
the following
\begin{enumerate}[i)]
\item 
\label{cdt_corona}

there exists $\alpha>0$ such that, for every $p\in \mathbb{C}$,  
\begin{equation}
\label{eq_corona}
\inf \left\{ \vertii{g^T\widehat{Q}(p)}+\vertii{g^TB} \suchthat \mbox{$g\in \mathbb{C}^d$, $\|g^T \|=1$} \right\} \ge \alpha.
\end{equation} 
\end{enumerate}
\end{proposition}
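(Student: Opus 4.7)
The strategy is to take the two-sided Laplace transform of the Bézout identity and combine two observations: Radon measures with compact support in $\mathbb{R}_-$ have Laplace transforms uniformly bounded on any left half-plane, while $\widehat Q(p)$ itself becomes large as $\Re(p)\to+\infty$. Assume that $R$ and $S$ are matrices with entries in $M(\mathbb{R}_-)$ satisfying $Q*R+P*S=\delta_0 I_d$. Since all four factors are compactly supported Radon measures, taking the Laplace transform entrywise yields
\[
\widehat Q(p)\,\widehat R(p)+B\,\widehat S(p)=I_d,\qquad p\in\mathbb{C}.
\]
For any unit vector $g\in\mathbb{C}^d$, multiplying this identity on the left by $g^T$, taking norms, and using submultiplicativity of $\vertiii{\cdot}$ gives
\[
1=\|g^T\|\le \|g^T\widehat Q(p)\|\,\vertiii{\widehat R(p)}+\|g^T B\|\,\vertiii{\widehat S(p)}\le \bigl(\|g^T\widehat Q(p)\|+\|g^T B\|\bigr)\,M(p),
\]
where $M(p):=\max\bigl(\vertiii{\widehat R(p)},\vertiii{\widehat S(p)}\bigr)$. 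Hence \eqref{eq_corona} will follow from a uniform upper bound on $M(p)$ on some region of $\mathbb{C}$, together with a direct argument based on $\widehat Q(p)$ alone on the complementary region.

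Let $T>0$ be such that all entries of $R$ and $S$ have support in $[-T,0]$. A direct computation from \eqref{laplace_transform_radon_measure} shows that every such entry $\mu$ satisfies $|\widehat\mu(p)|\le e^{T\sigma^+}|\mu|(\mathbb{R})$, where $\sigma^+:=\max(\Re(p),0)$; in particular the bound depends only on $\Re(p)$. Consequently, for any fixed $\sigma_0>0$, $M(p)$ is bounded by a constant $C(\sigma_0)>0$ on the half-plane $\{\Re(p)\le\sigma_0\}$, and thus $\|g^T\widehat Q(p)\|+\|g^T B\|\ge 1/C(\sigma_0)$ for every $p$ in that half-plane and every unit $g$. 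To handle $\{\Re(p)>\sigma_0\}$ we use the factorization \eqref{lien_entre_les_deux_fonctions}, i.e., $\widehat Q(p)=e^{p\Lambda_N}H(p)$: since $H(p)\to I_d$ as $\Re(p)\to+\infty$, we may choose $\sigma_0$ large enough so that $\vertiii{I_d-H(p)}\le 1/2$ whenever $\Re(p)\ge\sigma_0$, which forces $\|g^T H(p)\|\ge 1/2$ for every unit $g$. For such $p$,
\[
\|g^T\widehat Q(p)\|=e^{\Lambda_N\Re(p)}\|g^T H(p)\|\ge \tfrac{1}{2}\,e^{\Lambda_N\sigma_0}.
\]
Taking $\alpha:=\min\bigl(1/C(\sigma_0),\tfrac12 e^{\Lambda_N\sigma_0}\bigr)>0$ yields \eqref{eq_corona} for every $p\in\mathbb{C}$.

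No step is genuinely hard: the only mild subtlety is the order of the choices, first fixing $\sigma_0$ so that $H$ is close to $I_d$ on the right half-plane and only then bounding $M(p)$ on the complementary left half-plane (where the bound may worsen with $\sigma_0$, but remains finite once $\sigma_0$ is fixed). The argument uses nothing beyond compact support of $R,S$ and the explicit form of $\widehat Q$; in particular, it does not rely on the assumption $\rank[A_N,B]=d$.
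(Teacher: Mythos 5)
Your argument is correct and is essentially the paper's proof written in direct rather than contrapositive form: both take the Laplace transform of the Bézout identity, bound $\vertiii{\widehat R(p)}$ and $\vertiii{\widehat S(p)}$ uniformly on a left half-plane $\{\Re(p)\le\sigma_0\}$ using the compact support of $R,S$ in $\mathbb{R}_-$, and dispose of the right half-plane via the growth $\widehat Q(p)\sim e^{p\Lambda_N}I_d$. Your explicit quantification of $\alpha$ and the remark that $\rank[A_N,B]=d$ is not needed are consistent with the paper.
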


\begin{proof}
Let $R$ and $S$ be the two matrices with entries in $M(\mathbb R_-)$ satisfying Bezout's identity~\eqref{eq:exact_cont0}.  By contradiction, we assume that Condition~\ref{cdt_corona} is not satisfied, so that there exist a sequence $(p_n)_{n \in \mathbb{N}} \in \mathbb{C}^{\mathbb{N}}$ and a sequence $(g_n)_{n \in \mathbb{N}} \in \left(\mathbb{C}^d\right)^{\mathbb{N}}$ such that $\|g_n^T\|=1$  for all $n \in \mathbb{N}$,  $\lim_{n\to\infty} g_n^TB = 0$, and $\lim_{n\to\infty} g_n^T \widehat{Q}(p_n) = 0$.

We claim that there exists $\tilde{\alpha}>0$ such that $\Re(p_n)\le \tilde{\alpha}$ for every $n\in \mathbb{N}$. Indeed, $\widehat{Q}(p_n)$ is equivalent to $e^{p_n \Lambda_N} I_d$  when $n$ tends to $+\infty$, and unboundedness from above of $\Re(p_n)$ would contradict the relation $\lim_{n\to\infty} g_n^T \widehat{Q}(p_n) = 0$. We next get, by a classical estimate of the Laplace transform of an element of $M(\mathbb{R}_-)$, that there exists $C>0$ such that, for all $p \in \mathbb{C}$ with $\Re(p) \le \tilde{\alpha}$,
\begin{equation}
\label{eq:PW1}
\max \left\{\vertiii{\widehat{R}(p)},\,\vertiii{\widehat{S}(p)} \right\} \le C.
\end{equation}
We then deduce from Equation~\eqref{eq:exact_cont0} and the Laplace transform that
\begin{equation}
\label{eq:PW2}
g_n^T \widehat{Q}(p) \widehat{R}(p)+g_n^T B \widehat S(p) = g_n^T,\qquad n \in \mathbb{N},\ p\in\mathbb{C}.
\end{equation}

Equations~\eqref{eq:PW1} and \eqref{eq:PW2} imply that it is impossible to have both  $\lim_{n\to\infty} g_n^TB = 0$ and $\lim_{n\to\infty} g_n^T \widehat{Q}(p_n) = 0$. We reached a contradiction, so that Condition~\ref{cdt_corona} is a necessary condition for the solvability of Bézout's identity.
\end{proof}

We can now undertake the proof of Theorem~\ref{main_result2}.

\begin{proof}[Proof of Theorem~\ref{main_result2}]
Theorem~\ref{lem:3} states that $L^1$ exact controllability is equivalent to the $L^1$ exact controllability in time $T=d \Lambda_N$. By Theorem~\ref{main_result1}, Condition~\ref{assumption2-L1} of Theorem~\ref{main_result2} is necessary for $L^1$ approximate controllability, so that it is also necessary for $L^1$ exact controllability. Combining Theorem~\ref{prop_fond_exact_controllability_L1} and Proposition~\ref{prop_nece_bezout}, we deduce that \eqref{eq_corona} is a necessary condition for $L^1$ exact controllability, and the equivalence between \eqref{eq_corona} and Condition~\ref{assumption1-L1} of Theorem~\ref{main_result2} is a consequence of Item~\ref{prop:lienHQ1} of Proposition~\ref{prop:lienQH} and Proposition~\ref{prop:eq_Q_corona_thm}.
\end{proof}

The necessity of Condition~\ref{cdt_corona} of Proposition~\ref{prop_nece_bezout} is the easy part for the solvability of Bézout's identity over the Radon measure algebra. The harder part would be to prove that Condition~\ref{cdt_corona} is also a sufficient condition to have the solvability of Bézout's identity. A first step of such a proof would consist in using exactly the trick from \cite{Fuhrmann_corona}, which  reduces a corona matrix problem to the corresponding 1D version, which can be written as follows.

\begin{conjecture}
\label{conjecture1}
Let $k \in \mathbb{N}$ and $q_j \in  M(\mathbb{R}_-)$ for $j=1,\dots,k$. Assume the existence of a $c>0$ such that
\begin{equation}
\label{eq:lemm_gen_Y_Corona1}
\sum_{j=1}^k \abs{\widehat{q}_j(p)} \ge c>0, \quad p \in \mathbb{C}.
\end{equation}
Then there exists $p_j \in  M(\mathbb{R}_-)$ for $j=1,\dots,k$ satisfying the equation
\begin{equation*}
\sum_{j=1}^k \widehat{q}_j(p)\widehat{p}_j(p) =1, \quad p \in \mathbb{C}.
\end{equation*}
\end{conjecture}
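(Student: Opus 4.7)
The strategy will be to reduce the statement to a classical corona theorem for bounded analytic functions on a half-plane and then recover the measure-theoretic realization via a Paley--Wiener type characterization. First I would exploit the fact that each $q_j$ has compact support in $\mathbb{R}_-$, say $\supp q_j \subset [-T_j, 0]$, which implies that $\widehat{q}_j$ is entire and bounded by $\|q_j\|_{\mathrm{TV}}$ on the closed left half-plane $\{\Re p \leq 0\}$. Restricting the hypothesis \eqref{eq:lemm_gen_Y_Corona1} to this half-plane places us exactly within the scope of Carleson's corona theorem, which, transferred to the half-plane by the conformal equivalence with the disk, would yield bounded analytic functions $F_j$ on $\{\Re p < 0\}$ satisfying $\sum_{j=1}^k \widehat{q}_j F_j \equiv 1$ there, with norms controlled only by $c$, $k$, and $\max_j \|q_j\|_{\mathrm{TV}}$.

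The delicate second step, which I expect to be the principal obstacle, is to arrange that these $F_j$ extend to entire functions that are themselves Laplace transforms of compactly supported Radon measures on $\mathbb{R}_-$. Since generic $H^\infty$ corona solutions are by no means of this form, one should not invoke Carleson's theorem as a black box but instead rerun its proof (for instance Wolff's $\bar\partial$-method) while keeping quantitative control of two additional quantities: the exponential type on the right half-plane, needed to recover compact support of the resulting measures via the Paley--Wiener theorem for distributions \cite{schwartz1966theorie}, and a uniform bound on vertical lines, needed to promote these distributions to Radon measures. A concrete partition of unity supported on a neighbourhood of the common near-zero set of the $\widehat{q}_j$, combined with a $\bar\partial$-solution operator on weighted spaces encoding the support constraint, seems a natural implementation.

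Once candidates $\widehat{p}_j$ that are entire and satisfy $\sum_{j=1}^k \widehat{q}_j \widehat{p}_j = 1$ on the left half-plane have been produced, the identity extends to all of $\mathbb{C}$ by analytic continuation, so the only remaining verification is the Paley--Wiener identification of the $\widehat{p}_j$ with elements of $M(\mathbb{R}_-)$. The real difficulty is therefore concentrated in the quantitative $\bar\partial$ step: the Wiener--Pitt phenomenon warns that the Gelfand spectrum of the convolution algebra $M(\mathbb{R}_-)$ is strictly larger than $\mathbb{C}$, so that the pointwise hypothesis \eqref{eq:lemm_gen_Y_Corona1} might in principle be strictly weaker than invertibility in the algebra. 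A successful proof will have to make genuine use of the entire-of-exponential-type structure of the $\widehat{q}_j$ to bypass such pathologies, and it is here that comparing with the related distributional result \cite[Theorem~5.1]{YAMAMOTO_Multi_Ring_2011}, which works in a larger algebra where the obstruction is weaker, should provide a useful template.
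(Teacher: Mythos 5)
The statement you are addressing is not proved in the paper: it is stated as Conjecture~\ref{conjecture1} precisely because the authors regard it as open (they write that the corona problem over the Radon measure algebra ``is still an open question''). So there is no proof to compare yours against, and the real question is whether your proposal closes the gap. It does not. Your first step is sound: each $\widehat{q}_j$ is entire, bounded on $\{\Re p \le 0\}$ by the total variation of $q_j$, and the hypothesis \eqref{eq:lemm_gen_Y_Corona1} gives corona data there, so Carleson's theorem transferred to the half-plane produces bounded analytic $F_j$ with $\sum_j \widehat{q}_j F_j \equiv 1$ on the open left half-plane. But everything after that is a description of what a proof would have to accomplish rather than an argument. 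You correctly identify the obstruction --- generic $H^\infty$ corona solutions are not Laplace transforms of compactly supported Radon measures, and the Wiener--Pitt phenomenon shows that pointwise non-vanishing of transforms is in general strictly weaker than invertibility in a measure algebra --- and then you defer its resolution to an unspecified quantitative rerun of Wolff's $\bar\partial$-argument ``on weighted spaces encoding the support constraint''. No such solution operator is constructed, no estimate is proved that would yield exponential type on the right half-plane (needed for the Paley--Wiener support statement) or order-zero regularity (needed to land in $M(\mathbb{R}_-)$ rather than $\mathcal{E}'(\mathbb{R}_-)$), and no mechanism is offered for why the entire-of-exponential-type structure of the $\widehat{q}_j$ should defeat the Wiener--Pitt obstruction. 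These are exactly the points at which the problem is open, so the proposal, as written, is a research plan and not a proof.

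If you want to make progress, the paper's own remark following the conjecture suggests a more tractable target: it suffices to treat $q_j$ in the subalgebra of $M(\mathbb{R}_-)$ finitely generated by the Dirac masses $\delta_{\Lambda_i-\Lambda_N}$, i.e.\ exponential polynomials $\sum_n c_n e^{p(\Lambda\cdot n - |n|\Lambda_N)}$. That class is far more rigid than general compactly supported measures, and corona-type or Bézout-type arguments over rings of exponential sums (or over the almost periodic Wiener algebra) are a more plausible route than forcing the Carleson--Wolff machinery to output measures. Also note that the distributional result \cite[Theorem~5.1]{YAMAMOTO_Multi_Ring_2011} you cite as a template requires hypotheses beyond the corona condition, so even the distributional analogue of your claim is not a direct consequence of \eqref{eq:lemm_gen_Y_Corona1} alone.
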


\begin{remark}
In fact, due to the structure of the difference delay systems that we consider, i.e., with a finite number of delays, it would be sufficient to prove Conjecture~\ref{conjecture1} for the measures $q_j$, for all $j=1,\dots,k$, belonging to the sub-algebra of Radon measures $M(\mathbb{R}_-)$ finitely generated by the Dirac measures $(\delta_{\Lambda_i-\Lambda_N})_{i=1,\dots,N}$.
\end{remark}

Notice that a proof of Conjecture~\ref{conjecture1} would imply a positive answer to Conjecture~\ref{conj:HY-exact} in the case $q$ equal to one, yielding a sufficient and necessary criterion for the $L^1$ exact controllability.

\subsection{Sufficient exact controllability criterion for \texorpdfstring{$C^k$}{Ck} functions}
\label{subsec:Ck}

We close this section by providing a positive exact controllability result which relies on the sufficient condition for the resolution of the Bézout identity over the distributional algebra in dimension $d$ given in \cite{YAMAMOTO_Multi_Ring_2011}, i.e., we provide an exact controllability criterion for steering the origin to regular solutions along System~\eqref{syst_lin_avec_sortie}. More precisely, let $C^k(\mathbb{R}_+,\mathbb{R}^d)$ (respectively, $C^k(\mathbb{R},\mathbb{R}^d)$), with $k$ integer, denote the space of $k$ times continuously differentiable functions defined on $\mathbb{R}_+$ (respectively, $\mathbb{R}$).

We need the following definition of controllability to agree with the $C^k$ functions.
\begin{definition}
\label{def_reachability_Ck}
Let $k$ be an integer and set
\begin{equation*}
X_{k}^Q:=\left\{\,y \in  C^k(\mathbb{R}_+,\mathbb{R}^d)\,|\,\pi(Q*y)=0  \right\}.
\end{equation*}

System~\eqref{syst_lin_avec_sortie} is \emph{$C^k$ exactly controllable} if, for every $y \in X_{k}^Q$, there exists $u \in C^0(\mathbb{R},\mathbb{R}^m)$ compactly supported in $\mathbb{R}_-$ such that $\pi(A*u)=y$.
\end{definition}

\begin{remark}
Note that, contrarily to the notions of controllability that we introduced previously, $C^k$ exact controllability requires the control $u$ to belong only to the less regular space of continuous functions, instead of requiring it to belong to a space similar to that of the state $y$.
\end{remark}

The proposition below states a sufficient controllability criteria for the $C^k$ functions.

\begin{proposition}\label{prop-Ckexact}
If Condition~\ref{cdt_corona} of Proposition~\ref{prop_nece_bezout} holds true then
System~\eqref{syst_lin_avec_sortie} is $C^k$ exactly controllable for some integer $k$.
\end{proposition}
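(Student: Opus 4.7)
The plan is to combine the sufficient condition for solving the Bézout identity over the algebra of distributions given in \cite[Theorem~5.1]{YAMAMOTO_Multi_Ring_2011} with the construction from the proof of Proposition~\ref{prop_fond_exact_controllability}, while tracking regularity carefully. Under Condition~\ref{cdt_corona}, that result yields matrices $R$ and $S$ with entries in $\mathcal{E}'(\mathbb{R}_-)$ satisfying $Q*R+P*S=\delta_0 I_d$. I would let $k$ be any nonnegative integer upper-bounding the orders of all entries of $R$ and $S$.

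Given $y \in X_{k}^Q$, I would extend it to $\tilde{y} \in C^k(\mathbb{R},\mathbb{R}^d)$ with support bounded on the left and such that $\tilde{y}|_{\mathbb{R}_+}=y$; such an extension is standard (glue $y$ with $0$ on some compact interval $[-T,0]$ by a smooth cutoff). I would then set $u := S*Q*\tilde{y}$. Since each entry of $Q$ is a finite linear combination of Dirac masses, $Q*\tilde{y}$ is again in $C^k(\mathbb{R},\mathbb{R}^d)$. Writing each entry of $S$ in the canonical form $\sum_{|\alpha|\le k} D^\alpha \mu_\alpha$ with finite measures $\mu_\alpha$ of compact support and using that $\mu_\alpha * D^\alpha f \in C^0$ whenever $f \in C^k$, one then sees that $u$ is continuous.

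To check that $u$ is compactly supported in $\mathbb{R}_-$, I would observe that for $t\ge 0$ the convolution $(Q*\tilde{y})(t)$ accesses $\tilde{y}$ only at nonnegative arguments (because $\supp Q \subset [-\Lambda_N,0]$ and $\Lambda_j \le \Lambda_N$), so $\pi(Q*\tilde{y})=\pi(Q*y)=0$ by the defining condition of $X_{k}^Q$. Applying Item~\ref{lem:A_3_Yamamoto} of Lemma~\ref{lem:sumlemmasYamamoto}, one obtains $\pi(u)=\pi(S*\pi(Q*\tilde{y}))=0$, so by Item~\ref{lem:A_2_Yamamoto} the support of $u$ lies in $(-\infty,0]$. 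Since $S$ and $Q$ have compact support while $\tilde{y}$ has support bounded on the left, the support of $u$ is also bounded on the left, and thus compact.

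Finally, to verify $\pi(A*u)=y$, I would use $A=Q^{-1}*P$, the Bézout identity, and associativity of the convolution (which is legitimate here because $Q^{-1}$ has one-sided support and $R,S,P,Q$ are compactly supported) to write
\[
A*u=Q^{-1}*P*S*Q*\tilde{y}=Q^{-1}*(\delta_0 I_d - Q*R)*Q*\tilde{y}=\tilde{y}-R*Q*\tilde{y}.
\]
Taking $\pi$ and invoking again Items~\ref{lem:A_3_Yamamoto} and \ref{lem:A_2_Yamamoto} of Lemma~\ref{lem:sumlemmasYamamoto} yields $\pi(A*u)=\pi(\tilde{y})-\pi(R*\pi(Q*\tilde{y}))=y-0=y$, as required. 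The main obstacle lies in the regularity bookkeeping: one has to make sure that a single integer $k$ is enough so that convolving the $C^k$ function $Q*\tilde{y}$ with the matrix-valued distribution $S$ lands in $C^0$, which reduces to controlling uniformly the maximal order of the entries of $R$ and $S$ produced by \cite[Theorem~5.1]{YAMAMOTO_Multi_Ring_2011}.
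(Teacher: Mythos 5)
Your second step---extending $y$ to $\tilde y \in C^k(\mathbb{R},\mathbb{R}^d)$ with support bounded on the left, setting $u=S*Q*\tilde y$, checking that $u$ is continuous with compact support in $\mathbb{R}_-$, and verifying $\pi(A*u)=y$ via the Bézout identity together with Items~\ref{lem:A_2_Yamamoto} and \ref{lem:A_3_Yamamoto} of Lemma~\ref{lem:sumlemmasYamamoto}---is correct and coincides with the argument used in the paper; your choice of $k$ (any bound on the orders of the entries of $R$ and $S$) is harmless, and in fact only the order of $S$ matters since $R$ only ever meets $\pi(Q*\tilde y)=0$.

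The gap is in your first step. You assert that, ``under Condition~\ref{cdt_corona}, [Theorem~5.1 of \cite{YAMAMOTO_Multi_Ring_2011}] yields'' matrices $R,S$ with entries in $\mathcal{E}'(\mathbb{R}_-)$ solving $Q*R+P*S=\delta_0 I_d$. But Condition~\ref{cdt_corona} is not the hypothesis of that theorem. Its assumptions (conditions (17) and (18) there) require, at every zero $p$ of $\det\widehat Q$, a matrix $\widehat\Phi(p)$ satisfying $G_p\widehat P\widehat\Phi(p)=G_p$, where the rows of $G_p$ form an orthonormal basis of the left kernel of $\widehat Q(p)$, together with a bound $\vertiii{\widehat\Phi(p)}\le c$ that is \emph{uniform over all such $p$}, as well as a uniform bound on the multiplicities of those zeros. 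Deriving all of this from Condition~\ref{cdt_corona} is the real content of the proof: one must show that $G_p\widehat P$ has full row rank (otherwise a unit vector annihilating both $\widehat Q(p)$ on the left and $B$ would contradict \eqref{eq_corona}), take $\widehat\Phi(p)=\bigl(G_p\widehat P\bigr)^{-1}G_p$ with the Moore--Penrose right inverse, and establish the uniform bound by a compactness/contradiction argument that uses precisely the quantitative constant $\alpha>0$ in \eqref{eq_corona}; the multiplicity bound comes from a P\'olya--Szeg\H{o} estimate for exponential polynomials. This uniformity is exactly where the strict positivity of the infimum in Condition~\ref{cdt_corona}, rather than the pointwise rank condition of the approximate-controllability criterion, is indispensable. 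As written, your first step is an unproven assertion, and the proposal is incomplete without it.
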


\begin{proof} In this proof, we fix $\vertii{\cdot}$ as the Euclidean norm. We proceed in two steps.

\begin{enumerate}[label={\textbf{Step~\arabic*.}}, ref={Step~\arabic*}, itemindent=*, leftmargin=0pt]

\item We aim to apply \cite[Theorem~5.1]{YAMAMOTO_Multi_Ring_2011}. Note first that, thanks to the classical P\'olya--Szeg\H{o} bound provided in \cite[Part Three, Problem~206.2]{Polya1998Problems}, there exists a positive integer $D$ such that the zeros of $\det \left(\widehat{Q}(\cdot)\right)$ have multiplicity at most $D$.

Assume that Condition~\ref{cdt_corona} of Proposition~\ref{prop_nece_bezout} holds true. Pick $p \in \mathbb{C}$ such that $\det \widehat{Q}(p)=0$ and denote by $\ell$ the dimension of the left kernel of $\widehat{Q}(p)$. Let $G_{p} \in \mathcal{M}_{\ell,d}(\mathbb{C})$ be such that its rows form an orthonormal basis of the left kernel of $ \widehat{Q}(p)$. Hence, $G_{p} \widehat{Q}(p)=0$ and $\vertii{z^T G_p} = \vertii{z}$ for every $z \in \mathbb C^\ell$.

We claim that the rows of the matrix $G_p \widehat{P}\in \mathcal{M}_{\ell,m}(\mathbb{C})$ are linearly independent. Indeed, if this were not the case, there would exist $z \in \mathbb{C}^{\ell}$ such that $\vertii{z} = 1$ and $z^T G_p\widehat{P}=0$. Thus we would have $z^T G_p \widehat{Q}(p)=0$, $z^T G_p\widehat{P}=0$, and $\vertii{z^T G_p}=1$, in contradiction with Condition~\ref{cdt_corona} of Proposition~\ref{prop_nece_bezout}. 
   
We deduce that $G_{p}\widehat{P}$ has right inverses and we use $(G_{p}\widehat{P})^{-1}$ to denote its Moore--Penrose right inverse, i.e., $(G_{p}\widehat{P})^{-1} = (G_{p}\widehat{P})^T (G_{p}\widehat{P} \widehat{P}^T G_p^T)^{-1}$. Define
\begin{equation*}
\widehat{\Phi}(p)=\left(G_{p}\widehat{P}\right)^{-1} G_{p}
\end{equation*}
and notice that $\widehat{\Phi}(p)$ satisfies
\begin{equation}
\label{eq:centrale2}
G_p\widehat{P}\widehat{\Phi}(p)=G_p.
\end{equation}

We now claim that there exists $c>0$ (independent of the zero $p$ of $\det \widehat{Q}$) such that
\begin{equation}
\label{eq:centrale3}
\vertiii{\widehat{\Phi}(p)} \le c.
\end{equation}
Indeed, arguing by contradiction yields a sequence $(p_n)_{n \in \mathbb{N}}$ of zeros of $\det \widehat Q$ such that $\vertiii{\left(G_{p_n} \widehat P\right)^{-1}}$ tends to infinity as $n \to +\infty$. We have $\vertiii{\left(G_{p_n} \widehat P\right)^{-1}} = 1/\lambda_{\min}(G_{p_n}\widehat{P} \widehat{P}^T G_{p_n}^T)$, where $\lambda_{\min}(\cdot)$ denotes the smallest eigenvalue of its argument. Let $(z_n)_{n \in \mathbb N}$ denote a sequence of vectors in $\mathbb C^\ell$ such that, for every $n \in \mathbb N$, $\vertii{z_n^T} = 1$ and $z_n$ is an eigenvector of $G_{p_n}\widehat{P} \widehat{P}^T G_{p_n}^T$ associated with its smallest eigenvalue. Then
\[
\vertii{z_n^T G_{p_n} \widehat P}^2 = \lambda_{\min}(G_{p_n}\widehat{P} \widehat{P}^T G_{p_n}^T) \underset{n \to +\infty}{\longrightarrow} 0,
\]
and, since $\vertii{z_n^T G_{p_n}} = 1$ for every $n \in \mathbb N$, and $G_{p_n} \widehat Q(p_n) = 0$, we obtain a contradiction with Condition~\ref{cdt_corona} in Proposition~\ref{prop_nece_bezout}, yielding \eqref{eq:centrale3}. 

The assumptions (18)\footnote{Equation~(18) in \cite{YAMAMOTO_Multi_Ring_2011} is the Laplace transform of Equation~(8) of the same article, which is an equation in $\left(\mathcal{E}'(\mathbb{R}) \right)^{d \times d} / \left( Q \right)$, where $(Q)$ is the ideal generated over $\mathcal{E}'(\mathbb{R}_-)$ by $Q$. Thus the equality in (18) has to be understood as an equality modulo the ideal generated by $\widehat Q(p)$, which is equivalent to requiring that $z^T \left(\widehat P(p) \widehat \Phi(p) - I_d\right) = 0$ for every $z^T$ in the left null space of $\widehat{Q}(p)$. Therefore, Equation~(18) in  \cite{YAMAMOTO_Multi_Ring_2011} is equivalent to our Equation~(73).} and (17) in the statement of \cite[Theorem~5.1]{YAMAMOTO_Multi_Ring_2011} are satisfied thanks to Equations \eqref{eq:centrale2} and \eqref{eq:centrale3}, respectively. We can then apply  \cite[Theorem~5.1]{YAMAMOTO_Multi_Ring_2011}, which ensures the existence of two distribution $R$ and $S$ with entries in $\mathcal{E}'(\mathbb{R}_{-})$ such that
\begin{equation*}
Q*R+P*S=\delta_0 I_d.
\end{equation*}

\item Since $S$ is a distribution with compact support, it has a finite order $k$. Let $y \in  X^{Q}_{k}$ and denote by $\tilde{y} \in C^k(\mathbb{R},\mathbb{R}^d)$ an extension on $\mathbb{R}$ of $y$ having a support bounded on the left. Set $\omega=S*Q*\tilde{y}$. Then $\omega$ belongs to $ C^0(\mathbb{R},\mathbb{R}^m)$ and it has a compact support included in $\mathbb{R}_-$. It follows from a similar argument given to obtain the equation \eqref{eq:sol-con-B} that $y=\pi\left(A * \omega\right)$, which proves the $C^k$ exact controllability. \qedhere
\end{enumerate}
\end{proof}

\begin{proof}[Proof of Theorem~\ref{thm:Ck}]
Notice that $y \in X_{k}^Q$ if and only if $[-\Lambda_N,0]\ni t\mapsto y(t+\Lambda_N)$
is $C^k$-admissible for System~\eqref{system_lin_formel2} in the sense of Definition~\ref{def:compatibility}. 
Recall that, by Proposition~\ref{Prop2:main_result}, Conditions \ref{assumption1-conject} and \ref{assumption2-conject} of Conjecture~\ref{conj:HY-exact} imply that Condition~\ref{cdt_corona} of Proposition~\ref{prop_nece_bezout} holds true.
Hence, by Proposition~\ref{prop-Ckexact}, every $C^k$-admissible function for System~\eqref{system_lin_formel2}
is in the range of $E_q(T)$ for some $T>0$ possibly depending on $y$. By Theorem~\ref{lem:RanE_con}, moreover, 
the space of  $C^k$-admissible functions for System~\eqref{system_lin_formel2} is contained in $\operatorname{Ran} E_q(d\Lambda_N)$.  
The conclusion then follows by Proposition~\ref{prop_var_const_formula} and the fact that
if $x_0$ is $C^k$-admissible for System~\eqref{system_lin_formel2} then 
$\Upsilon_q(t)x_0$ is  $C^k$-admissible for System~\eqref{system_lin_formel2} for every $t\ge 0$.
\end{proof}

\section{Applications}
\label{sec:applications}

In this section, we show in simple cases how to derive from Theorems~\ref{main_result1} and \ref{main_result2} Kalman-type conditions for controllability in the single input case $m=1$. By a Kalman-type condition, we refer to a  frequency-free  criterion for controllability. For instance, in the single-delay case $N=1$, approximate and exact controllability (in any fixed time $T \geq d \Lambda_1$) coincide and are equivalent to the classical \emph{Kalman rank condition} stating that the rank of the controllability matrix $[B,A_1B,\dots,A_1^{d-1}B]$ is equal to the state space dimension $d$. We first partially recover criteria given in \cite{Chitour2020Approximate} in the case of two delays, two space dimensions, and a single input, and then extend such a study to the case of two delays, three space dimensions, and a single input. 

Up to a time-rescaling, we can assume from now on that $(\Lambda_1,\Lambda_2)=(L,1)$ with $L \in (0,1)$. The case $L \in \mathbb Q$ was completely addressed in \cite{Chitour2020Approximate} and we assume for the rest of this section that $L$ is irrational.

\subsection{Two delays, two space dimensions, and a single input}
\label{sec:N=D=2,m=1}

In this section, we recover some results given in \cite[Theorem~4.1]{Chitour2020Approximate}, which  concern the particular case where $N=d=2$ and $m=1$.
The results in \cite{Chitour2020Approximate} are stated for matrices with complex coefficients, hence, what we actually recover here are some results of \cite[Theorem~4.1]{Chitour2020Approximate} restricted to the case of matrices with real coefficients. On the other hand, the results in \cite{Chitour2020Approximate} are stated only for $L^2$ controllability, but here we consider $L^q$ controllability for any $q \in [1, +\infty)$.

Fix then $A_1,A_2\in \mathcal{M}_{2,2}(\mathbb{R})$ and $B\in  \mathcal{M}_{2,1}(\mathbb{R})$. Given a matrix $A\in \mathcal{M}_{2,2}(\mathbb{R})$, we say that the pair $(A,B)$ is controllable if it satisfies the Kalman rank condition. We have three cases.
\begin{enumerate}[I)]
\item $\Ran A_2 \subset \Ran B$ or both pairs $(A_1,B)$, $(A_2,B)$ are not controllable. In both subcases, System~\eqref{system_lin_formel2} is not even $L^q$ approximately controllable. Indeed, in the first subcase, Condition~\ref{assumption2bis} of Theorems~\ref{main_result1} and~\ref{main_result2} is not satisfied. In the second subcase, one can assume with no loss of generality that 
\[
A_1=\begin{pmatrix}a_1&0\\a_2&a_3
\end{pmatrix}, \quad
A_2=\begin{pmatrix}b_1&0\\b_2&b_3
\end{pmatrix}, \quad
B=\begin{pmatrix}0\\1
\end{pmatrix},
\]
with $b_1\neq 0$ (otherwise we are back to the first subcase). Then the first coordinate $x_1$ of the state $x$ is not controllable since one has $x_1(t)=a_1x_1(t-L)+x_1(t-1)$.

\item $\Ran A_2  \not\subset \Ran B$ and exactly one of the pairs $(A_1,B)$, $(A_2,B)$ is controllable. Then System~\eqref{system_lin_formel2} is $L^q$ approximately controllable in time $2 \Lambda_2$. Indeed, notice first that $\rank[A_2,B] = 2$, and we are thus left to show that Condition~\ref{assumption1bis} of Theorem~\ref{main_result1} hold true, which is equivalent to proving that Condition~\ref{app-b} of Proposition~\ref{Prop2:main_result0} holds true. Assume, for instance, that $(A_1,B)$ is controllable and $(A_2,B)$ is not. Hence, up to a linear change of variables, we can assume with no loss of generality that
\[
A_1=\begin{pmatrix}a_1&a_2\\a_3&a_4
\end{pmatrix}, \quad
A_2=\begin{pmatrix}b_1&0\\b_2&b_3
\end{pmatrix}, \quad
B=\begin{pmatrix}0\\1
\end{pmatrix},
\]
with $a_2 \neq 0$ and $b_1 \neq 0$. For $g \in \mathbb C^2$ with $\vertii{g} = 1$, one has either $g^T B \neq 0$ or $g^T = (\alpha, 0)$ for some $\alpha \in \mathbb C$ with $\abs{\alpha} = 1$. Hence, in order to show Condition~\ref{app-b} of Proposition~\ref{Prop2:main_result0}, it suffices to show that $\vertii{g^T H(p)}$ is nonzero for every $p \in \mathbb C$ and with $g^T = (1, 0)$. For every $p \in \mathbb C$, one checks that the second coordinate of $g^T H(p)$ is equal to $-a_2 e^{-p L}$, which never vanishes, yielding the conclusion. The case where $(A_1,B)$ is not controllable and $(A_2,B)$ is can be handled similarly.

\item\label{C3D2} $(A_1,B)$ and $(A_2,B)$ are both controllable. Let $B^{\bot} \in \mathbb{R}^2$ be the unique vector such that $\det(B,B^{\bot})=1$ and $B^T B^{\bot}=0$. Set
\[
\beta= \frac{\det \left( \left[B,A_2B \right] \right)}{\det \left( \left[B,A_1B \right] \right)}, \qquad \alpha= \det  \left( [B,(A_2-\beta A_1)B^{\bot}] \right).
\]
Up to a linear change of coordinates, we can assume that
\begin{equation}
\label{eq:new_coordinates-221-case3}
A_1=\begin{pmatrix}
0 & 1  \\
a_1 & a_2 
\end{pmatrix}, \quad A_2=\begin{pmatrix}
\alpha & \beta  \\
b_1 & b_2 
\end{pmatrix},  \quad B=\begin{pmatrix}
0   \\
1 
\end{pmatrix}.
\end{equation}

The holomorphic map $H$ is now given by
\begin{equation}
\label{eq_221}
H(p)=I_2- e^{-pL} A_1 - e^{- p}A_2=\begin{pmatrix}
1-\alpha e^{-p} & -e^{-pL}-\beta e^{-p}  \\
\ast & \ast
\end{pmatrix},\qquad p \in \mathbb{C}.
\end{equation}
Let $\alpha=|\alpha|e^{i \theta}$ with $\theta \in \{0,\pi\}$. Since $\beta \neq 0$, we have that $\rank[A_2,B]=2$. Then, by Theorem~\ref{main_result1} and Equation~\eqref{eq_221}, we have that System~\eqref{system_lin_formel2} is not $L^q$ approximately controllable in time $2\Lambda_2$ if and only if
\begin{align*}
\exists p\in\mathbb{C}\mbox{ s.t.\ $\rank\left[{H}(p),B\right]<2$} &\iff \exists p\in\mathbb{C}\mbox{ s.t.\ $1-\alpha e^{-p}=0$ and  $e^{-pL}+\beta e^{-p}=0$}\\
&\iff 0 \in S,
\end{align*}
where $S = \left\{ \beta+ |\alpha|^{1-L}e^{i (\theta+2k \pi)(1-L)}\suchthat k \in \mathbb{Z} \right\}$. Since $L$ is irrational, notice that $\overline S$, is the circle in $\mathbb C$ of center $\beta$ and radius $|\alpha|^{1-L}$, denoted hereafter by $C$. By Theorem~\ref{main_result2} and Equation~\eqref{eq_221}, one can prove in the same way that System~\eqref{system_lin_formel2} is not $L^1$ exactly controllable in time $2 \Lambda_2$ if $0 \in C$.
\end{enumerate}

In the third case above, the fact that there exists $p\in\mathbb{C}$ so that $1-\alpha e^{-p}=0$ and  $e^{-pL}+\beta e^{-p}=0$ can be equivalently written as
\begin{equation*}
MY(p)=\begin{pmatrix}1\\0\end{pmatrix},\qquad 
\hbox{ where }
M=\begin{pmatrix}0&\alpha\\1&\beta\end{pmatrix}
\hbox{ and } Y(p)=\begin{pmatrix}e^{-pL}\\e^{-p}\end{pmatrix}.
\end{equation*}

The above computations are a particular case of a more general situation, described next.

\begin{lemma}
\label{lem:M-Y-Z}
Let $M$ be a $2 \times 2$ matrix with complex coefficients, $Z \in \mathbb C^2 \setminus \{0\}$, $L \in (0, 1)\setminus \mathbb{Q}$, and define the map $Y: p \mapsto (e^{-p L}, e^{-p})^T$.
\begin{enumerate}[label={\roman*)}, ref={\roman*}]
\item If $M$ is not invertible, then the following assertions are equivalent:
\begin{enumerate*}[(a)]
\item $Z \in \Ran M$;
\item $Z \in M Y(\mathbb C)$;
\item $Z \in M \overline{Y(\mathbb C)}$.
\end{enumerate*}

\item If $M$ is invertible, set $M^{-1} Z = (a_1, a_2)^T$ and define
\[
S = \left\{a_1 - \abs{a_2}^L e^{i L (\theta_2 + 2 k \pi)} \suchthat k \in \mathbb Z\right\},
\]
where $a_2 = \abs{a_2} e^{i \theta_2}$ and $\theta_2 \in \mathbb R$. Then the following assertions hold true:
\begin{enumerate*}[label={\theenumi-\roman*)}]
\item $Z \in \allowbreak M Y(\mathbb C)$ if and only if $0 \in S$;
\item $Z \in M \overline{Y(\mathbb C)}$ if and only if $0 \in C$, the latter being the circle in $\mathbb C$ of center $a_1$ and radius $\abs{a_2}^L$.
\end{enumerate*}
\end{enumerate}
\end{lemma}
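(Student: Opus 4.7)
My approach is to first describe the sets $Y(\mathbb C)$ and $\overline{Y(\mathbb C)}$ explicitly, and then treat the two cases separately. Parametrizing $z_2 = e^{-p} \in \mathbb C^*$ fixes $p$ only up to $2\pi i \mathbb Z$, so $z_1 = e^{-pL} = |z_2|^L e^{iL\arg z_2}e^{-2\pi i L k}$ for some $k \in \mathbb Z$. Since $L$ is irrational, Kronecker's theorem gives density of $\{e^{-2\pi i L k}\}_{k \in \mathbb Z}$ in the unit circle, and letting $\Re p \to +\infty$ produces the limit $(0,0)$, so
\[
\overline{Y(\mathbb C)} = \{(z_1, z_2) \in \mathbb C^2 : |z_1| = |z_2|^L\}
\]
(with $|0|^L := 0$).

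For item (ii), where $M$ is invertible, both equivalences are direct computations. The condition $Z \in MY(\mathbb C)$ translates to $Y(p) = (a_1, a_2)^T$ for some $p$, which requires $a_2 \neq 0$ and then forces $p = -\log|a_2| - i\theta_2 + 2\pi i k$ for some $k \in \mathbb Z$, whence $e^{-pL} = |a_2|^L e^{iL(\theta_2 - 2\pi k)}$; equating this to $a_1$ and reindexing $k \mapsto -k$ is exactly $0 \in S$ (the degenerate case $a_2 = 0$ forces $a_1 \neq 0$ and $S = \{a_1\} \not\ni 0$, keeping the equivalence). The second assertion follows from the description of $\overline{Y(\mathbb C)}$: $Z \in M\overline{Y(\mathbb C)}$ iff $|a_1| = |a_2|^L$ iff $0 \in C$.

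Item (i) is the substantive part. If $M=0$, then $\operatorname{Ran} M = \{0\}$ and (since $Z\neq 0$) all three conditions trivially fail. Otherwise $M$ has rank $1$ and I would write $M = vw^T$ with $v, w \in \mathbb C^2 \setminus \{0\}$. Setting $f(p) := w^T Y(p) = w_1 e^{-pL} + w_2 e^{-p}$, we have $MY(p) = f(p)\,v$ and $\operatorname{Ran} M = \mathbb C v$. The implications (b)$\Rightarrow$(c)$\Rightarrow$(a) are immediate. For (a)$\Rightarrow$(b), a nonzero $Z = cv \in \operatorname{Ran} M$ lies in $MY(\mathbb C)$ iff $c \in f(\mathbb C)$, so the task reduces to showing $f(\mathbb C) \supset \mathbb C^*$.

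The main obstacle is this surjectivity statement. If $w_1 = 0$ or $w_2 = 0$, then $f$ is a nonzero scalar multiple of one exponential and $f(\mathbb C) = \mathbb C^*$, done. When both $w_j$ are nonzero, I would argue by contradiction using Hadamard's factorization: $f$ is entire of order at most $1$, so if $f$ omits a value $c_0 \in \mathbb C$, then $f - c_0$ has no zeros and Hadamard yields $f(p) - c_0 = e^{\alpha p + \beta}$ for some $\alpha, \beta \in \mathbb C$, i.e.,
\[
w_1 e^{-pL} + w_2 e^{-p} - e^{\alpha p + \beta} - c_0 = 0 \quad \text{on } \mathbb C.
\]
By the linear independence of distinct characters $p \mapsto e^{\lambda p}$, matching exponents forces $\{-L, -1\}$ to coincide with $\{\alpha, 0\}$ as multisets (with the coefficients $w_1, w_2$ nonzero driving the matching); this is impossible because $L \in (0,1)$ gives $-L \notin \{-1, 0\}$ and $-1 \neq 0$. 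The contradiction shows $f(\mathbb C) = \mathbb C$, completing (a)$\Rightarrow$(b).
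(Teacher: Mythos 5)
Your proof is correct. For comparison: the paper states Lemma~\ref{lem:M-Y-Z} without proof, offering it as an abstraction of the explicit computations immediately preceding it in Section~\ref{sec:N=D=2,m=1}, and those computations effectively only cover your item (ii), where one inverts $M$ and reduces to solving $Y(p)=(a_1,a_2)^T$; your handling of that case, together with the identification $\overline{Y(\mathbb C)}=\{(z_1,z_2)\in\mathbb C^2 : |z_1|=|z_2|^L\}$ via equidistribution of $(kL \bmod 1)_{k\in\mathbb Z}$, matches what the paper does implicitly. The genuinely new content is your item (i): after factoring a rank-one $M$ as $vw^T$ and reducing (a)$\Rightarrow$(b) to the surjectivity of $f(p)=w_1e^{-pL}+w_2e^{-p}$ onto $\mathbb C^*$, you dispose of the nontrivial case $w_1w_2\neq 0$ by Hadamard factorization: if $f-c_0$ had no zeros it would equal $e^{\alpha p+\beta}$, and linear independence of the characters $e^{-Lp}$, $e^{-p}$, $e^{0\cdot p}$, $e^{\alpha p}$ then forces $w_1=0$ or $w_2=0$, since the single exponent $\alpha$ cannot cancel both $-L$ and $-1$ and neither of these equals $0$. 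This is a sound and essentially optimal tool here (one could instead quote the classical fact that an exponential sum with two distinct nonzero frequencies plus a constant has infinitely many zeros, but Hadamard keeps the argument self-contained). Two small points worth spelling out in a final write-up: the factorization $M=vw^T$ with $v,w\neq 0$ exists because $M\neq 0$ has rank one, and in (a)$\Rightarrow$(b) the scalar $c$ with $Z=cv$ is nonzero because $Z\neq 0$, so surjectivity onto $\mathbb C^*$ indeed suffices.
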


Our two main results allowed us to recover partially, in the case of matrices with real coefficients, the algebraic controllability result stated in \cite[Theorem~4.1]{Chitour2020Approximate} when $N=d=2$ and $m=1$, which consider $L^2$ (approximate and exact) controllability concepts while in the present paper, we are instead dealing with $L^q$ approximate controllability and $L^1$ exact controllability. We have shown that three possibilities occur: \begin{enumerate*}[I)] \item neither approximate nor exact controllability hold true; \item $L^q$ approximate controllability holds true but we cannot say anything about  exact controllability; \item there exists a countable subset $S$ of the complex plane, completely characterized by $A_1,A_2,B$, and $L$, which is dense in a circle $C$, such that $L^q$ approximate controllability is equivalent to the fact that $0\notin S$ and a necessary condition for $L^1$ exact controllability is $0\notin C$.\end{enumerate*}

\begin{remark}
For Case~\ref{C3D2}, we can assume with no loss of generality (see \cite[Lemma~4.5]{Chitour2020Approximate}) that $a_1=a_2=b_1=b_2=0$ in \eqref{eq:new_coordinates-221-case3}, so that Theorem~\ref{prop_fond_exact_controllability_L1} implies that System~\eqref{system_lin_formel2} is $L^1$ exactly controllable if and only if the Bézout identity \eqref{eq:exact_cont0} is solvable in $M(\mathbb R_-)$, that is, if and only if there exists $(s_1,s_2,r_1,r_2,r_3,r_4) \in M(\mathbb{R}_-)^6$ such that
\begin{equation}
\label{cas_particulier_bezout_main}
\left\{
\begin{aligned}
 & (\delta_{-1} - \alpha \delta_0) * r_1 - (\delta_{L - 1} + \beta \delta_0) * r_2 = \delta_0, \\
 & (\delta_{-1} - \alpha \delta_0) * r_3 - (\delta_{L - 1} + \beta \delta_0) * r_4 = 0, \\
 & \delta_{-1} * r_2 + s_1 = 0, \\
 & \delta_{-1} * r_4 + s_2 = \delta_0.\\
\end{aligned}
\right.
\end{equation}
The second and fourth equations of \eqref{cas_particulier_bezout_main} are satisfied if one chooses
\[
r_3 = \delta_{L - 1} + \beta \delta_0, \quad
r_4 = \delta_{-1} - \alpha \delta_0, \quad
s_2 = \delta_0 + \alpha \delta_{-1} - \delta_{-2}.
\]
In addition, as soon as the first equation of \eqref{cas_particulier_bezout_main} is satisfied, the third one can be satisfied by setting $s_1 = - \delta_{-1} * r_2$, and we hence focus in the first equation of \eqref{cas_particulier_bezout_main} in the sequel, i.e.,
\begin{equation}
\label{cas_particulier_bezout_main2}
(\delta_{-1} - \alpha \delta_0) * r_1 - (\delta_{L - 1} + \beta \delta_0) * r_2 = \delta_0.
\end{equation}

Let $q_1 = \delta_{-1} - \alpha \delta_0$ and $q_2 = -\delta_{L - 1} - \beta \delta_0$, and notice that their Laplace transforms are given by $\widehat q_1(p) = e^p - \alpha$ and $\widehat q_2(p) = -e^{(1-L)p} - \beta$ for $p \in \mathbb C$. By definition of $S$, we have that $0 \in S$ if and only if there exists $p \in \mathbb C$ such that $\widehat q_1(p) = \widehat q_2(p) = 0$, and one can show that $0 \in \overline S$ if and only if $\inf_{p \in \mathbb C} \abs{\widehat q_1(p)} + \abs{\widehat q_2(p)} = 0$. Hence, the condition $0 \notin \overline S$ is equivalent to the existence of a constant $c > 0$ such that $\abs{\widehat q_1(p)} + \abs{\widehat q_2(p)} \geq c$ for every $p \in \mathbb C$. If Conjecture~\ref{conjecture1} held true, then it would imply that there exist $r_1$ and $r_2$ in $M(\mathbb R_-)$ such that their Laplace transforms satisfy $\widehat q_1(p) \widehat r_1(p) + \widehat q_2(p) \widehat r_2(p) = 1$, which is exactly the Laplace transform of \eqref{cas_particulier_bezout_main2}. 
In particular, this shows that, if $0 \notin \overline S$ and Conjecture~\ref{conjecture1} holds true, then System~\eqref{system_lin_formel2} in the case of the present example is $L^1$ exact controllable.

Note that the condition $0 \notin \overline S$ is shown in \cite{Chitour2020Approximate} to be a necessary and sufficient condition for the $L^2$ exact controllability of System~\eqref{system_lin_formel2} in Case~\ref{C3D2}. Unfortunately, even for this simple example, it is not clear if the $L^2$ exact controllability is equivalent to the $L^1$ exact controllability.
\end{remark}

\subsection{Two delays, three space dimensions, and a single input: a geometric locus controllability result}
\label{subsec:2delay3dim1in}

In this section, we assume that $N=2$, $d=3$, and $m=1$. In this case $A_1,A_2\in \mathcal{M}_{3,3}(\mathbb{R})$ and $B\in  \mathcal{M}_{3,1}(\mathbb{R})$. Up to a linear change of variables, we assume in the sequel that $B=(0,0,1)^T$. We use $r_0$ to denote the dimension of the vector space spanned by the three vectors $B,A_1B,A_2B$. 

We subdivide the discussion in three cases. 
\begin{enumerate}[I)] 
\item $\rank[A_2,B]\leq 2$ or $r_0=1$. Then, in both subcases, System~\eqref{system_lin_formel2} is not even approximately controllable. Indeed, in the first subcase, the rank condition 
\ref{assumption2bis} of Theorem~\ref{main_result1} is violated. 
In the second subcase, the matrices $A_1$ and $A_2$ are of the form
\[
A_1=\begin{pmatrix}
\widetilde{A}_1 &0_{2,1}\\
\ast &a_1
\end{pmatrix},
\quad A_2=\begin{pmatrix}
\widetilde{A}_2&0_{2,1}\\
\ast &a_2
\end{pmatrix}.
\]
Then the subsystem of System~\eqref{system_lin_formel2}  made of the first two coordinates is uncoupled to the third coordinate and is uncontrolled. The overall system is then 
not controllable.

\item\label{dim3-case2} $\rank[A_2,B]=3$ and $r_0=2$. 
We only treat the case where $A_2B$ is not colinear to $B$ (the other case being entirely similar). Up to a linear change of variables (in a basis having $A_2B$ as second vector and $B$ as third one), one can transform the matrices $A_1$ and $A_2$ into the form 
\[
A_1=\begin{pmatrix}
\widetilde{A}_1 &\begin{matrix}0\\ \alpha\end{matrix}\\
\ast &\ast
\end{pmatrix},
\quad A_2=\begin{pmatrix}
\widetilde{A}_2 &\begin{matrix}0\\ 1\end{matrix}\\
\ast &0
\end{pmatrix},
\]
where the matrices $\widetilde{A}_1$ and  $\widetilde{A}_2$ are $2\times 2$.

The holomorphic map $H$ associated with System~\eqref{system_lin_formel2} is equal to 
\begin{equation}
\label{eq_231}
H(p)=\begin{pmatrix}
I_2- e^{-pL} \widetilde{A}_1 - e^{- p}\widetilde{A}_2&
\begin{matrix}0\\-(e^{-pL}\alpha+e^{-p})
\end{matrix}\\
\ast & \ast
\end{pmatrix}.
\end{equation}
Then, by Theorem~\ref{main_result1} and Equation~\eqref{eq_231}, System~\eqref{system_lin_formel2} is not $L^q$ approximately controllable in time $3\Lambda_2$ if and only if there exists $p\in\mathbb{C}$ such that 
$\rank\left[{H}(p),B\right]<3$, i.e., either \begin{enumerate*}[(a)] \item\label{dim3-case2a} $e^{-pL}\alpha+e^{-p}=0$ and 
$\det(I_2- e^{-pL} \widetilde{A}_1 - e^{- p}\widetilde{A}_2)=0$ or \item\label{dim3-case2b} $e^{-pL}\alpha+e^{-p}\neq 0$ and the $(1,1)$ and $(1,2)$ coefficients of the $3\times 3$ matrix $H(p)$ are both equal to zero.\end{enumerate*}

In the subcase \ref{dim3-case2a}, the second condition is equivalent to the fact that $\det\big(e^{pL}I_2-(\widetilde{A}_1 - \alpha\widetilde{A}_2)\big)=0$, i.e., $e^{pL}$ is one of the eigenvalues of $\widetilde{A}_1 - \alpha\widetilde{A}_2$. Hence we are in the situation of Lemma~\ref{lem:M-Y-Z} with $M = \begin{pmatrix}\alpha & 1 \\ 1 & 0\end{pmatrix}$ and at most two vectors $Z$ corresponding to the nonzero eigenvalues of $\widetilde{A}_1 - \alpha\widetilde{A}_2$. It follows that there exist at most two countable sets of complex numbers $S_1, S_2$, completely characterized in terms of $L$ and the coefficients of $A_1, A_2$, which are dense in two circles $C_1, C_2$, respectively, and such that System~\eqref{system_lin_formel2} is $L^q$ approximately controllable if and only if $0$ does not belongs to the union of $S_1$  and $S_2$, while System~\eqref{system_lin_formel2} is not $L^1$ exactly controllable if $0$ belongs to the union of $C_1$  and $C_2$. (Note that each $S_i$ can be equal to $C_i$ if the latter reduces to a point.) It is not difficult to see that the subcase \ref{dim3-case2b} also boils down to a similar situation, but with at most one circle.

\item $\rank[A_2,B]=3$ and $r_0=3$. 
Up to a linear change of variables (in a basis having $A_1B$ as first vector, $A_2B$ as second vector, and $B$ as third one), one can transform the matrices $A_1$ and $A_2$ into the form 
\[
A_1=\begin{pmatrix}
\widetilde{A}_1 &\begin{matrix}1\\ 0\end{matrix}\\
\ast &0
\end{pmatrix},
\quad A_2=\begin{pmatrix}
\widetilde{A}_2 &\begin{matrix}0\\ 1\end{matrix}\\
\ast &0
\end{pmatrix},
\]
where the matrices $\widetilde{A}_1$ and  $\widetilde{A}_2$ are $2\times 2$. 

The holomorphic map $H$ associated with System~\eqref{system_lin_formel2} is equal to 
\begin{equation*}
H(p)=\begin{pmatrix}
I_2- e^{-pL} \widetilde{A}_1 - e^{- p}\widetilde{A}_2&
\begin{matrix}-e^{-pL}\\-e^{-p}
\end{matrix}\\
\ast&0
\end{pmatrix}.
\end{equation*}

We denote by $v_1$ and $v_2$ the columns of the matrix $\widetilde{H} (p)=I_2-e^{-pL} \widetilde{A}_{1}-e^{-p} \widetilde{A}_{2}$ and we set $v_3=(e^{-pL}\ e^{-p})^T$. Note that the rank assumption on $[A_2,B]$ implies that the first row of $\widetilde A_2$ is not equal to zero. 

It is immediate to see that there exists $p\in\mathbb{C}$ so that $\rank\left[{H}(p),B\right]<3$ if and only if both $v_1$ and $v_2$ are colinear to the nonzero vector $v_3$, i.e., $\det(v_1,v_3)=0$ and $\det(v_2,v_3)=0$. These conditions can be rewritten as two scalar equations $v_3^T Q_1 v_3+e^{-p} = 0$ and $v_3^TQ_2v_3+e^{-p L}=0$, where $Q_1,Q_2$ are symmetric matrices with real coefficients and at least one between $Q_1$ and $Q_2$ is not equal to zero because of the rank assumption on $[A_2,B]$.

The two previous equations in the unknowns $e^{-pL}$ and $e^{-p}$ define two distinct conic sections, at least one of them being nontrivial. Therefore, they have therefore $k$ distinct intersection points with $0 \le k \le 4$. For each of them, one can completely characterize, in terms of $L$ and the coefficient of $A_1,A_2$, a countable set of complex numbers $S_j$, $1\leq j\leq k$, which is dense in a circle $C_j$, such that  System~\eqref{system_lin_formel2} is $L^q$ approximately controllable if and only if $0$ does not belongs to the union of the $S_j$, while System~\eqref{system_lin_formel2} is not $L^1$ exactly controllable if $0$ does belongs to the union of the $C_j$.
\end{enumerate}

In conclusion, if $N=2$, $d=3$, and $m=1$, Theorem~\ref{main_result1} (respectively, Theorem~\ref{main_result2}) allows one to derive frequency-free necessary and sufficient (respectively, necessary) criteria for $L^q$ approximate (respectively, $L^1$ exact) controllability, and the results are qualitatively similar to those obtained if $N=d=2$ and $m=1$, with the difference in Case~\ref{C3D2} where now we may have up to four distinct countable sets $S_j$, $1\leq j\leq k$, each of them dense in a circle $C_j$.

\bibliographystyle{abbrv} 
\bibliography{biblio_DCDS}

\begin{thebibliography}{10}

\bibitem{Avellar}
C.~E. Avellar and J.~K. Hale.
\newblock On the zeros of exponential polynomials.
\newblock {\em J. Math. Anal. Appl.}, 73(2):434--452, 1980.

\bibitem{baratchart}
L.~Baratchart, S.~Fueyo, G.~Lebeau, and J.-B. Pomet.
\newblock Sufficient stability conditions for time-varying networks of
  telegrapher's equations or difference-delay equations.
\newblock {\em SIAM J. Math. Anal.}, 53(2):1831--1856, 2021.

\bibitem{bastin2016stability}
G.~Bastin and J.-M. Coron.
\newblock {\em Stability and boundary stabilization of 1-{D} hyperbolic
  systems}, volume~88 of {\em Progress in Nonlinear Differential Equations and
  their Applications}.
\newblock Birkh\"{a}user/Springer, [Cham], 2016.
\newblock Subseries in Control.

\bibitem{bony2001cours}
J.-M. Bony.
\newblock {\em Cours d'analyse: th{\'e}orie des distributions et analyse de
  Fourier}.
\newblock Editions Ecole Polytechnique, 2001.

\bibitem{carleson1962interpolations}
L.~Carleson.
\newblock Interpolations by bounded analytic functions and the corona problem.
\newblock {\em Annals of Mathematics}, pages 547--559, 1962.

\bibitem{Chitour2016Stability}
Y.~Chitour, G.~Mazanti, and M.~Sigalotti.
\newblock Stability of non-autonomous difference equations with applications to
  transport and wave propagation on networks.
\newblock {\em Netw. Heterog. Media}, 11(4):563--601, 2016.

\bibitem{Chitour2020Approximate}
Y.~Chitour, G.~Mazanti, and M.~Sigalotti.
\newblock Approximate and exact controllability of linear difference equations.
\newblock {\em J. \'{E}c. polytech. Math.}, 7:93--142, 2020.

\bibitem{coron}
J.-M. Coron.
\newblock {\em Control and nonlinearity}, volume 136 of {\em Mathematical
  Surveys and Monographs}.
\newblock American Mathematical Society, Providence, RI, 2007.

\bibitem{CoronOptimal}
J.-M. Coron and H.-M. Nguyen.
\newblock On the optimal controllability time for linear hyperbolic systems
  with time-dependent coefficients.
\newblock Preprint arXiv:2103.02653.

\bibitem{CoNg}
J.-M. Coron and H.-M. Nguyen.
\newblock Dissipative boundary conditions for nonlinear 1-{D} hyperbolic
  systems: sharp conditions through an approach via time-delay systems.
\newblock {\em SIAM J. Math. Anal.}, 47(3):2220--2240, 2015.

\bibitem{Coron2019Optimal}
J.-M. Coron and H.-M. Nguyen.
\newblock Optimal time for the controllability of linear hyperbolic systems in
  one-dimensional space.
\newblock {\em SIAM J. Control Optim.}, 57(2):1127--1156, 2019.

\bibitem{Coron2021Null}
J.-M. Coron and H.-M. Nguyen.
\newblock Null-controllability of linear hyperbolic systems in one dimensional
  space.
\newblock {\em Systems Control Lett.}, 148:Paper No. 104851, 8, 2021.

\bibitem{Fuhrmann_corona}
P.~A. Fuhrmann.
\newblock On the corona theorem and its application to spectral problems in
  {H}ilbert space.
\newblock {\em Trans. Amer. Math. Soc.}, 132:55--66, 1968.

\bibitem{Hale}
J.~K. Hale and S.~M. Verduyn~Lunel.
\newblock {\em Introduction to functional-differential equations}, volume~99 of
  {\em Applied Mathematical Sciences}.
\newblock Springer-Verlag, New York, 1993.

\bibitem{hale2002stabilization}
J.~K. Hale and S.~M. Verduyn~Lunel.
\newblock Strong stabilization of neutral functional differential equations.
\newblock {\em IMA J. Math. Control Inform.}, 19(1-2):5--23, 2002.
\newblock Special issue on analysis and design of delay and propagation
  systems.

\bibitem{Henry}
D.~Henry.
\newblock Linear autonomous neutral functional differential equations.
\newblock {\em J. Differential Equations}, 15:106--128, 1974.

\bibitem{Jacobs}
M.~Q. Jacobs and C.~E. Langenhop.
\newblock Criteria for function space controllability of linear neutral
  systems.
\newblock {\em SIAM J. Control Optim.}, 14(6):1009--1048, 1976.

\bibitem{kalman1972realization}
R.~Kalman and M.~Hautus.
\newblock Realization of continuous-time linear dynamical systems: rigorous
  theory in the style of schwartz.
\newblock In {\em Ordinary Differential Equations}, pages 151--164. Elsevier,
  1972.

\bibitem{kamen1976module}
E.~W. Kamen.
\newblock Module structure of infinite-dimensional systems with applications to
  controllability.
\newblock {\em SIAM Journal on Control and Optimization}, 14(3):389--408, 1976.

\bibitem{maggi2012sets}
F.~Maggi.
\newblock {\em Sets of finite perimeter and geometric variational problems: an
  introduction to Geometric Measure Theory}.
\newblock Number 135 in Cambridge Studies in Advanced Mathematics. Cambridge
  University Press, 2012.

\bibitem{Manitius}
A.~Manitius and R.~Triggiani.
\newblock Function space controllability of linear retarded systems: a
  derivation from abstract operator conditions.
\newblock {\em SIAM J. Control Optim.}, 16(4):599--645, 1978.

\bibitem{Mazanti2017Relative}
G.~Mazanti.
\newblock Relative controllability of linear difference equations.
\newblock {\em SIAM J. Control Optim.}, 55(5):3132--3153, 2017.

\bibitem{Miller2004}
L.~Miller.
\newblock Controllability cost of conservative systems: resolvent condition and
  transmutation.
\newblock {\em J. Funct. Anal.}, 218(2):425--444, 2005.

\bibitem{connor}
D.~A. O'Connor and T.~J. Tarn.
\newblock On the function space controllability of linear neutral systems.
\newblock {\em SIAM J. Control Optim.}, 21(2):306--329, 1983.

\bibitem{polderman1998introduction}
J.~W. Polderman and J.~C. Willems.
\newblock {\em Introduction to mathematical systems theory}, volume~26 of {\em
  Texts in Applied Mathematics}.
\newblock Springer-Verlag, New York, 1998.
\newblock A behavioral approach.

\bibitem{Polya1998Problems}
G.~P\'{o}lya and G.~Szeg\H{o}.
\newblock {\em Problems and theorems in analysis. {I}}.
\newblock Classics in Mathematics. Springer-Verlag, Berlin, 1998.
\newblock Series, integral calculus, theory of functions, Translated from the
  German by Dorothee Aeppli, Reprint of the 1978 English translation.

\bibitem{Rudin1987Real}
W.~Rudin.
\newblock {\em Real and complex analysis}.
\newblock McGraw-Hill Book Co., New York, third edition, 1987.

\bibitem{rudin1991functional}
W.~Rudin.
\newblock {\em Functional Analysis}.
\newblock International series in pure and applied mathematics. McGraw-Hill,
  1991.

\bibitem{salamon1984control}
D.~Salamon.
\newblock {\em Control and observation of neutral systems}, volume~91 of {\em
  Research Notes in Mathematics}.
\newblock Pitman (Advanced Publishing Program), Boston, MA, 1984.

\bibitem{schwartz1966theorie}
L.~Schwartz.
\newblock {\em Th\'{e}orie des distributions}.
\newblock Publications de l'Institut de Math\'{e}matique de l'Universit\'{e} de
  Strasbourg, IX-X. Hermann, Paris, 1966.
\newblock Nouvelle \'{e}dition, enti\'{e}rement corrig\'{e}e, refondue et
  augment\'{e}e.

\bibitem{sontag}
E.~D. Sontag.
\newblock {\em Mathematical control theory}, volume~6 of {\em Texts in Applied
  Mathematics}.
\newblock Springer-Verlag, New York, second edition, 1998.
\newblock Deterministic finite-dimensional systems.

\bibitem{Yutaka_Yamamoto}
Y.~Yamamoto.
\newblock {\em Realization Theory of infinite-dimensional linear systems}.
\newblock {PhD} thesis, University of Florida, Florida, United States, 1978.

\bibitem{yamamoto1981realization}
Y.~Yamamoto.
\newblock Realization theory of infinite-dimensional linear systems. {P}arts
  {I} and {II}.
\newblock {\em Math. Systems Theory}, 15:55--77 and 169--190, 1981/82.

\bibitem{YamamotoRealization}
Y.~Yamamoto.
\newblock Pseudo-rational input/output maps and their realizations: a
  fractional representation approach to infinite-dimensional systems.
\newblock {\em SIAM J. Control Optim.}, 26(6):1415--1430, 1988.

\bibitem{yamamoto1989reachability}
Y.~Yamamoto.
\newblock Reachability of a class of infinite-dimensional linear systems: an
  external approach with applications to general neutral systems.
\newblock {\em SIAM J. Control Optim.}, 27(1):217--234, 1989.

\bibitem{Yamamoto_coprimness_measure}
Y.~Yamamoto.
\newblock Coprimeness in the ring of psedorational transfer functions.
\newblock In {\em 2007 Mediterranean Conference on Control Automation}, pages
  1--6, 2007.

\bibitem{YAMAMOTO_Multi_Ring_2011}
Y.~Yamamoto.
\newblock Bézout identity over a ring of distributions—multivariable case.
\newblock {\em IFAC Proceedings Volumes}, 44(1):10098--10104, 2011.
\newblock 18th IFAC World Congress.

\bibitem{Yamamoto_Willems}
Y.~Yamamoto and J.~C. Willems.
\newblock Behavioral controllability and coprimeness for a class of
  infinite-dimensional systems.
\newblock In {\em Proceeding of the 47th IEEE Conference on Decision and
  Control}, pages 1513--1518, 2008.

\end{thebibliography}

\end{document}